\documentclass[11pt]{article}
\usepackage{amsfonts,latexsym,rawfonts,amsmath,amssymb,amsthm,a4wide, times, color}
\usepackage{graphicx}
\usepackage[plainpages=false]{hyperref}
\numberwithin{equation}{section}
\usepackage{hyperref}
\usepackage[titletoc,title]{appendix}

 \usepackage[all]{xy}
 \usepackage{eufrak}
 \usepackage{makeidx}
 \usepackage{graphicx,psfrag}

\numberwithin{equation}{section}

\newcommand{\pd}[2]{\frac {\partial #1}{\partial #2}}

\newcommand{\al}{\alpha}
\newcommand{\bb}{\beta}
\newcommand{\la}{\lambda}
\newcommand{\La}{\Lambda}
\newcommand{\oo}{\omega}
\newcommand{\Om}{\Omega}

\newcommand{\dd}{\delta}
\newcommand{\Na}{\nabla}

\def\ga{\gamma}

\newcommand{\ee}{\epsilon}

\newcommand{\Si}{\Sigma}
\newcommand{\Te}{\Theta}
\newcommand{\te}{\theta}

\newcommand{\beq}{\begin{equation}}
\newcommand{\eeq}{\end{equation}}
\newcommand{\beqs}{\begin{eqnarray*}}
\newcommand{\eeqs}{\end{eqnarray*}}
\newcommand{\beqn}{\begin{eqnarray}}
\newcommand{\eeqn}{\end{eqnarray}}
\newcommand{\beqa}{\begin{array}}
\newcommand{\eeqa}{\end{array}}

\def\td{\tilde}
\def\p{\partial}
\def\b{\backslash}

\def\RR{{\mathbb R}}
\def\NN{{\mathbb N}}

\def\ri{\rightarrow}

\def\vol{{\rm Vol}}
\def\Ann{{\rm Ann}}

\def\cA{{\mathcal A}}

\def\cP{{\mathcal P}}

\def\cS{{\mathcal S}}

\def\ka{{\kappa}}

\def\x{{\mathbf{x}}}

\def\n{{\mathbf{n}}}

\def\m{{\frak m}}

\def\Area{{\mathrm {Area}}}
\def\Supp{{\mathrm {Supp}}}

\renewcommand\div{{\rm div}}
\newtheorem{prop}{Proposition}[section]
\newtheorem{theo}[prop]{Theorem}
\newtheorem{lem}[prop]{Lemma}
\newtheorem{claim}[prop]{Claim}
\newtheorem{cor}[prop]{Corollary}
\newtheorem{rem}[prop]{Remark}

\newtheorem{defi}[prop]{Definition}

\title{ The extension problem of the mean curvature flow (I)}
\author{Haozhao Li \footnote{Supported by NSFC grant No. 11671370 and the Fundamental Research Funds for the Central Universities.} \quad  and \quad  Bing Wang \footnote{Supported by NSF grant DMS-1510401.}}

\begin{document}
\bibliographystyle{plain}


\maketitle

\begin{abstract}
We show that the mean curvature  blows up at the first finite singular time for a closed smooth  embedded mean curvature flow in $\RR^3.$
\end{abstract}

\tableofcontents

\section{Introduction}
Let $\x_0: \Si^n\ri \RR^{n+1}$ be a closed smooth embedded hypersurface in $\RR^{n+1}. $ A one-parameter family of immersions $\x(p, t): \Si^n\ri \RR^{n+1}$ is called a mean curvature flow, if $\x$  satisfies the equation
\beq
\pd {\x}t=-H\n,\quad \x( 0)=\x_0, \label{eq:MCF0}
\eeq where $H$ denotes the mean curvature of the hypersurface $\Si_t:=\x(t)(\Si)$ and $\n$ denotes the unit normal vector field of $\Si_t$.  In \cite{[Hui1]}, Huisken proved that if the flow (\ref{eq:MCF0}) develops a singularity at time $T<\infty$, then the fundamental form will blow up at time $T$. A natural conjecture is that the mean curvature will blow up at the finite singular time of a mean curvature flow. There are many results toward this conjecture(c.f. Cooper~\cite{[Cooper2]}, Le-Sesum~\cite{[LS]}\cite{[LS2]}\cite{[LS2a]}, Lin-Sesum~\cite{[LS3]}, Xu-Ye-Zhao~\cite{[XYZ]} ...).
 This conjecture is also proposed in page 42 of Mantegazza's book \cite{[Man]} as an open problem. In this paper, we confirm this conjecture in dimension two:

\begin{theo}\label{theo:main1} If
 $\x(p, t): \Si^2\ri \RR^3 (t\in [0, T))$ is a closed smooth embedded mean curvature flow
with the first  singular time $T<+\infty$, then
$$\sup_{\Si\times [0, T)}|H|(p, t)=+\infty.$$

\end{theo}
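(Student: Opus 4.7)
The plan is to argue by contradiction: suppose $|H| \le C$ on $\Si \times [0, T)$ and derive that the flow extends smoothly across $T$. This contradicts Huisken's theorem that the full second fundamental form $|A|$ must blow up at the first finite singular time. The whole argument revolves around analysing the possible tangent flows at a singular space-time point $X_0 = (x_0, T)$.

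First I would use Huisken's monotonicity formula to extract a tangent flow at $X_0$ as a limit of parabolic rescalings of the given flow. The tangent flow is a self-shrinker satisfying $H = \langle x, \nu\rangle/2$. Since parabolic rescaling about $X_0$ multiplies the mean curvature by a factor $\sqrt{T-t_i}\to 0$, and since $|H|\le C$ on the original flow by hypothesis, the tangent flow has $H\equiv 0$. The self-shrinker equation then forces $\langle x,\nu\rangle\equiv 0$, so the tangent flow is a stationary cone through the origin; in $\RR^3$ the only such integral Brakke varifolds are (possibly multiplicity-$k$) planes. If the multiplicity $k$ at $X_0$ equals $1$, equivalently the Huisken density $\Theta(X_0)=1$, then White's local regularity theorem upgrades this to full $C^\infty$ curvature bounds in a parabolic neighbourhood of $X_0$, contradicting the assumption that $X_0$ is a singular point. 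Hence every singular point at time $T$ must carry a tangent plane of multiplicity $k\ge 2$.

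The crux of the proof is therefore to rule out this higher-multiplicity case under the hypotheses that the ambient flow is smooth and embedded with bounded $|H|$. Heuristically, $k\ge 2$ means that near $X_0$, at small scales, the evolving surface consists of several nearby sheets collapsing onto a single limit plane. On the scale separating these sheets, bounded $|H|$ together with Ecker--Huisken type interior estimates should give uniform smooth graphical bounds on each individual sheet over the limit plane. Embeddedness and the strong maximum principle should then prevent the sheets from actually touching, forcing the Huisken density at $X_0$ to be $1+o(1)$ and contradicting $k\ge 2$. Making this heuristic rigorous is the hard part.

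The main obstacle is precisely this multiplicity step: ruling out higher-multiplicity tangent planes for a smooth embedded flow is essentially the dimension-three case of Ilmanen's multiplicity-one conjecture, which remains non-trivial even under bounded $|H|$. A successful execution likely requires refined $\varepsilon$-regularity, quantitative stratification of the singular set, or a Bing--Wang style pseudolocality argument transported from the Ricci flow setting, combined with the classification of low-density embedded self-shrinkers in $\RR^3$ and the genus control inherited from the closed surface $\Si$. Once the multiplicity $\ge 2$ scenario is excluded, the singular set at time $T$ is empty, $|A|$ remains uniformly bounded up to $T$, and standard short-time existence extends the flow past $T$, contradicting the minimality of $T$.
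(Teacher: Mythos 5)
You have correctly identified the outer skeleton of the argument: reduce the problem to classifying tangent flows at a putative singular point; observe that bounded $|H|$ forces $H\equiv 0$ on the tangent flow, so the self-shrinker equation makes it a stationary cone, hence a plane of some multiplicity $k$; handle $k=1$ with White's local regularity theorem; and then confront the multiplicity-$\ge 2$ scenario. However, your proposal stops precisely where the actual content of the theorem begins. Ruling out $k\ge 2$ is not a step you carry out --- you explicitly flag it as ``the hard part,'' offer a heuristic you concede is non-rigorous, and gesture at possible tools. That is a genuine gap, and it is the entire point of the paper: without that step there is no proof.

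The heuristic you propose for the multiplicity step does not survive scrutiny as stated, and it is also not the route the paper takes. You suggest that bounded $|H|$ plus Ecker--Huisken interior estimates give uniform graphical bounds on each sheet, after which embeddedness and the strong maximum principle prevent the sheets from touching and force density $1+o(1)$. The difficulty is that the sheets can collapse onto the plane asymptotically without ever touching, so the strong maximum principle alone gives no quantitative lower bound on their separation, and hence no control on the density. One needs a genuinely quantitative mechanism, and that is what the paper builds: a long-time two-sided pseudolocality theorem (Theorem~\ref{theo:pseudoB}) together with an energy concentration lemma (Lemma~\ref{lem:energy concen1}) upgrade the varifold convergence to smooth flow convergence of the rescaled flow to the plane away from finitely many points; the difference of the top and bottom sheet graphs then solves a linear parabolic equation (\ref{eq:F008}) converging to the drift heat equation $\partial_t w = Lw$; an ``almost non-increasing'' thin-part volume $|\mathbf{TN}|$ (Lemma~\ref{lma:GC05_3}) combined with the parabolic Harnack inequality (Theorem~\ref{append:theo2}) controls the normalized sheet difference $w_i$ uniformly on compact space-time sets (Lemma~\ref{lem:estimates}); the positive limit $w$ certifies, via a log-substitution as in Colding--Minicozzi~\cite{[CM1]}, that the limit plane is $L$-stable away from the singular set (Lemma~\ref{lma:GB24_2}); a Gulliver--Lawson logarithmic cutoff extends $L$-stability across the finite singular set; and finally the plane is \emph{not} $L$-stable, giving the contradiction. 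None of this machinery appears in your proposal. You correctly name pseudolocality as a likely tool, but the decisive ingredient --- the $L$-stability argument with the monotone thin-part quantity and parabolic Harnack --- is absent. As written, your argument is an accurate roadmap of what would need to be proven, not a proof.
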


It is interesting to compare the extension problem  of mean curvature flow with Ricci flow. In \cite{[Ha1]} Hamilton proved that the Riemann curvature tensor will blow up at the finite singular time of a Ricci flow. In \cite{[Sesum1]} Sesum extended Hamilton's result to the Ricci curvature by using Perelman's noncollapsing theorem. In a series of papers \cite{[CW-extend1]}\cite{[CW-extend2]}\cite{[CW-extend3]} Wang and Chen-Wang provided several conditions which can be used to extend Ricci flow, and they  gave a different proof of Sesum's result. It is also conjectured that the Ricci flow can be extended if the scalar curvature stays bounded. Important progresses have been made by Zhang \cite{[Zhang]},  Bamler-Zhang \cite{[BZ]} and Simon \cite{[MSimon]}.

The mean curvature flow with convexity conditions has been well studied during the past several decades. If the initial hypersurface satisfies some convexity conditions, like mean convex or two convex, then the mean curvature flow has some convexity estimates (c.f.  Huisken \cite{[Hui1]}, Huisken-Sinestrari \cite{[HS99a]}\cite{[HS99b]}, Haslhofer-Kleiner \cite{[HK13]}). These convexity estimates are important for studying the surgery of mean curvature flow (c.f. Huisken-Sinestrari\cite{[HS09]},  Brendle-Huisken \cite{[BH]}, Haslhofer-Kleiner \cite{[HK14]}). In  \cite{[White00]} \cite{[White03]} White also gave some important properties of the singularities of a mean curvature flow with mean convex initial hypersurfaces. However, all these results rely on the convexity condition of  initial hypersurfaces, and it is very difficult to study  general cases (c.f. Colding-Minicozzi~\cite{[CM2]}~\cite{[CM3]}, T. Ilmanen~\cite{[Ilmanen]}). Theorem \ref{theo:main1} can be viewed as an attempt to study the general singularities without assuming convexity conditions.\\

Now we sketch the proof of Theorem \ref{theo:main1}. Assume that the mean curvature is  bounded along the flow (\ref{eq:MCF0}) and the first singular time $T<+\infty$. Consider the
corresponding rescaled mean curvature flow
\beq \Big(\pd {\x}t\Big)^{\perp}=-\Big(H-\frac 12\langle\x, \n  \rangle\Big)\n,\quad \forall\;t\in [0, \infty).\label{eq:RMCF0}\eeq
Then the mean curvature decays exponentially to zero along the flow (\ref{eq:RMCF0}).
We have to show that  the  flow (\ref{eq:RMCF0}) converges smoothly to a plane with multiplicity one. The proof consists of two steps:\\

\emph{ Step 1. Convergence of the rescaled mean curvature flow with multiplicities.}

In this step,   we first follow the ideas of Chen-Wang \cite{[CW1]}  \cite{[CW2]} to develop the weak compactness theory of mean curvature flow with certain properties,
which are basically area doubling property together with bounded mean curvature $H$ and bounded energy $\int_{\Sigma} |A|^2$.
Note that the energy $\int_{\Sigma} |A|^2$ can be bounded by $H$ and topology, via the Gauss-Bonnett theorem(c.f. (\ref{eqn:PE10_1})).
To prove such weak compactness, there are two main technical ingredients:
\begin{itemize}
  \item  The two-sided, long-time pseudolocality theorem.
  \item The energy concentration property.
\end{itemize}
The short-time forward pseudolocality theorem for mean curvature flow was studied under the local graphic condition, which says that  the surface can be written locally as a graph of a single-valued function.
See  Ecker-Huisken~\cite{[EH1]} \cite{[EH2]}, M.T. Wang~\cite{[WangMT]}, Chen-Yin~\cite{[ChenYin]} and S.Brendle~\cite{[BH]}.
In our case, we need to remove this graphic condition since  the flow may converge with multiplicities.
Under the assumption that the mean curvature is bounded, we will show a new short-time, two-sided pseudolocality type theorem(c.f. Theorem~\ref{theo:pseudoA}).
Moreover, the pseudolocality theorem can be improved as long-time version(c.f. Theorem~\ref{theo:pseudoB}) whenever the mean curvature is very small.
Then the energy concentration property follows from the pseudolocality theorem.
Once we have the long-time, two-sided pseudolocality theorem  and the energy concentration property (c.f. Lemma~\ref{lem:energy concen1}), we can show the weak compactness of mean curvature flow and get the ``flow" convergence of the rescaled mean curvature flow.
Since the limit is both minimal and self-shrinking, we obtain the limit must be a plane passing through the origin, with possibly more than one multiplicity.
\\

\emph{ Step 2. Show that the multiplicity of the convergence is one.}

In this step, we use the ideas from the compactness of self-shrinkers and minimal surfaces (c.f. Colding-Minicozzi~\cite{[CM4]}\cite{[CM1]}, L.Wang~\cite{[LWang]}) to show that the multiplicity is one.
For otherwise we can construct a sequence of positive solutions to the  corresponding  parabolic equations on compact sets of the limit surface away from singularities.
However, we shall show that the existence of such a solution contradicts the fact that the limit is a plane with multiplicities.
To obtain the contradiction, there are also two main technical ingredients:
\begin{itemize}
  \item  Uniform estimates of the sequence of positive solutions.
  \item The $L$-stability of the limit surface across the singular set.
\end{itemize}
To show uniform estimates of these positive solutions, we introduce a new ``almost" decreasing quantity along the rescaled mean curvature flow, and use this quantity with the parabolic Harnack inequality to control the positive solutions. After normalization and taking the limit, we get a positive solution with good estimates on the limit surface away from singularities. Using these estimates, we show that the limit surface is $L$-stable away from singularities. Recall that a hypersurface $\Si$
is called $L$-stable(c.f. Colding-Minicozzi~\cite{[CM1]}) if for any compactly supported function $u$, we have
$$\int_{\Si}\, -uLu \,e^{-\frac {|x|^2}{4}}\geq 0,$$
where $L$ is the operator
$Lu=\Delta u+|A|^2u-\frac 12\langle x, \Na u\rangle+\frac u2$ defined by Colding-Minicozzi~\cite{[CM2]}.
Following the argument
of Gulliver-Lawson \cite{[GL]} in minimal surfaces, we show that the limit surface is actually $L$-stable across the singular set.
However, the limit surface is a plane which is not $L$-stable. Thus we obtain the desired contradiction. This contradiction forces that the convergence of (\ref{eq:RMCF0}) must have multiplicity one and hence in the smooth topology.\\

Once   the rescaled mean curvature flow (\ref{eq:RMCF0}) converges smoothly to a plane with multiplicity one,  we use Huisken's monotonicity formula and  White's regularity theorem to show that the mean curvature flow (\ref{eq:MCF0}) actually has no singular points at time $T$. This finishes the proof of Theorem \ref{theo:main1}.\\

The strategy of the proof of Theorem~\ref{theo:main1} is very similar to the one used in the study of the K\"ahler Ricci flow on Fano manifolds by Chen-Wang~\cite{[CW2]}(See~\cite{[CW2A]} and~\cite{[CW2B]} for the published version)
and Chen-Sun-Wang~\cite{[CSW]}.
Actually, in Chen-Wang~\cite{[CW2]}, the flow weak-compactness was setup for the normalized K\"ahler Ricci flow, which is comparable to the step 1 mentioned above.
Then in Chen-Sun-Wang~\cite{[CSW]}, the limit and convergence topology can be improved to be ``smooth" whenever the underlying manifold is $K$-stable.  This is related to our step 2 described above.
However, here we used the $L$-stability of the limit surfaces, instead of the $K$-stability of the the underlying manifolds to rule out the possible singularities.
Not surprisingly, the strategy can also be applied to study the extension problem for the 4-dimensional Ricci flow with bounded scalar curvature.
Actually,  based on the calculation of M. Simon(c.f.~\cite{[MSimon]}), we have uniform bounded energy $\int_{M} |Rm|^2 $ along the flow.
Then the  weak-compactness of the parabolic normalized Ricci flows(compared with equation (\ref{eq:RMCF0}))
$$\partial_t g=-2(Ric-g)$$
follows from Chen-Wang~\cite{[CW1]}(See also~\cite{[BZ]} for a different approach).
The limit of the above flow is then a Ricci-flat gradient shrinking soliton with finite singular points, which is nothing but a flat metric cone over $S^3/\Gamma$ for some finite subgroup $\Gamma$ of $O(4)$.
Similar to the step 2 mentioned above, the 4-dimensional Ricci flow extension problem will be confirmed if one can develop methods to rule out the nontrivial $\Gamma$'s. \\

We remark that the weak compactness of the mean curvature flow, i.e., step 1, is based on the observation that locally the structure of the mean curvature flow with bounded $H$ and energy $\int_{\Sigma}\, |A|^2 $
is modeled by the structure of minimal surfaces with uniformly bounded topology.  Similar observation is also the key for the weak compactness of the Ricci flow with bounded  energy $\int_{M} \,|Rm|^{\frac{m}{2}} $ and bounded scalar curvature $R$.
The two-sided,  long-time pseudo-locality(c.f. Chen-Wang~\cite{[CW1]},~\cite{[CW2]} and~\cite{[CW3]}) and the energy concentration are the technical tools for writing down the observation in rigorous analysis estimates.
It seems that such observation holds for many other geometric flows, e.g., the harmonic map flow and the Calabi flow. \\

The organization of this paper is as follows. In Section 2 we recall some basic facts on mean curvature flow and minimal surfaces. In Section 3 we develop the weak compactness theory of mean curvature flow under some geometric conditions. In Section 4 we show the rescaled mean curvature flow with the exponential decay of mean curvature  converges smoothly to a plane with multiplicity one.  Finally, we finish the proof of Theorem \ref{theo:main1} in Section 5. \\

 {\bf Acknowledgements}: H. Z. Li would like to thank Professors T.H. Colding, W. P. Minicozzi II and X. Zhou for insightful discussions.  Part of this work was done while he was visiting MIT and he wishes to thank MIT for their generous hospitality.
 B. Wang would like to thank Professors T. Ilmanen,  L. Wang and O. Hershkovits  for helpful discussions.
 Both authors are grateful to  the anonymous referees for many useful suggestions to improve the exposition of this paper.

\section{Preliminaries}

Let $\x(p, t): \Si^n\ri \RR^{n+1}$ be a family of smooth  embeddings  in $\RR^{n+1}$. $\{(\Si^n, \x(t)), 0\leq t< T\}$ is called a  mean curvature flow if $\x(t)$ satisfies  \beq \pd {\x}t=-H\n,\quad \forall\; t\in [0, T),\label{eq:MCF}\eeq
and called a rescaled mean curvature flow if $\x(t)$ satisfies
 \beq \Big(\pd {\x}t\Big)^{\perp}=-\left(H-\frac 12\langle \x, \n\rangle \right)\n,\quad \forall\; t\in [0, T),\label{eq:RMCF}\eeq
Sometimes we also write $\x_t$  as $\x(t)$ for short.
It is easy to check that the rescaled mean curvature flow is equivalent to the mean curvature flow after rescalings in space
and a reparameterization of time. We denote by $B_r(p)$ the open  ball in $\RR^3$ centered at $p$ with radius $r$.

It is well known that the volume ratio is bounded from below along the flow (\ref{eq:MCF}). See, for example,  Lemma 2.9 of Colding-Minicozzi \cite{[CM2]}.
\begin{lem} \label{lem:vol} Let $\{(\Si^n, \x(t)), 0\leq t< T\}$ be a mean curvature flow (\ref{eq:MCF}). Then there is a constant $N=N(\vol(\Si_0), T)>0$ such that for all $r>0$ and $p_0\in \RR^{n+1}$ we have
$$\vol(B_r(p_0)\cap \Si_t)\leq N r^n, \quad \forall\; t\in [0, T).$$
\end{lem}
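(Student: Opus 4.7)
The natural tool is Huisken's monotonicity formula, which for any fixed $p_0 \in \RR^{n+1}$ and $t_0 > t$ asserts that the Gaussian area
$$\Theta(p_0, t_0; s) := \int_{\Si_s} \frac{1}{(4\pi(t_0-s))^{n/2}} e^{-\frac{|x-p_0|^2}{4(t_0-s)}} d\mu_s$$
is non-increasing in $s\in[0,t_0)$ along the flow. My plan is to choose $t_0-t$ comparable to $r^2$ so that the backward heat kernel at time $t$ is bounded below on $B_r(p_0)$ by a constant multiple of $r^{-n}$, then compare $\Theta(p_0, t_0; t)$ to $\Theta(p_0, t_0; 0)$ by monotonicity and control the latter using only the fixed initial data.

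Concretely, I would take $t_0 = t+r^2$. On $B_r(p_0)\cap\Si_t$ the kernel is at least $e^{-1/4}(4\pi r^2)^{-n/2}$, which gives the lower bound
$$\Theta(p_0, t_0; t) \,\geq\, \frac{e^{-1/4}}{(4\pi r^2)^{n/2}}\,\vol(B_r(p_0)\cap\Si_t).$$
For the upper bound at $s=0$, since $\Si_0$ is a fixed closed smooth embedded hypersurface it has uniformly bounded area ratios $\vol(B_\rho(q)\cap\Si_0)\le C_0\rho^n$; chopping $\Si_0$ into dyadic annuli $A_k = \{2^{k-1}\sqrt{t_0}\le |x-p_0|\le 2^k\sqrt{t_0}\}$ and summing the Gaussian contributions yields a bound $\Theta(p_0, t_0; 0) \le C$ depending only on $n$ and $C_0$, the series converging geometrically thanks to the Gaussian decay. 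Huisken's monotonicity $\Theta(p_0, t_0; t)\le\Theta(p_0, t_0; 0)$ then gives $\vol(B_r(p_0)\cap\Si_t)\le N r^n$.

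The only real obstacle is matching the claimed dependence $N = N(\vol(\Si_0), T)$: the constant from the dyadic argument a priori involves the initial area-ratio constant $C_0$, which is a geometric quantity beyond the mere volume of $\Si_0$. For $r$ large compared to the diameter of $\Si_0$, however, the trivial estimate $\vol(B_r\cap\Si_t)\le\vol(\Si_t)\le\vol(\Si_0)\le\vol(\Si_0)\,r^n$ follows immediately from the area-decreasing property of mean curvature flow. One therefore only needs the monotonicity argument on bounded $r$, where the range of $t_0$ is bounded above by $T+r^2$, and the entire dependence on $\Si_0$ and $T$ can be swallowed into the final constant $N$.
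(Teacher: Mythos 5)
Your approach via Huisken's monotonicity formula is exactly the standard one; the paper gives no proof of its own and simply cites Lemma 2.9 of Colding--Minicozzi \cite{[CM2]}, whose argument matches yours. Taking $t_0 = t + r^2$, bounding the backward heat kernel from below by $e^{-1/4}(4\pi r^2)^{-n/2}$ on $B_r(p_0)\cap\Si_t$, applying monotonicity, and controlling the Gaussian density at time $0$ by the finite entropy $\lambda(\Si_0)=\sup_{p_0,\,t_0>0}\int_{\Si_0}(4\pi t_0)^{-n/2}e^{-|x-p_0|^2/(4t_0)}\,d\mu_0$ is precisely the proof of that lemma; your dyadic-annulus computation is one concrete way to verify $\lambda(\Si_0)<\infty$.

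The point you raise about the constant's dependence is real, and you are right not to sweep it under the rug. What the monotonicity argument actually produces is $\vol(B_r(p_0)\cap\Si_t)\le e^{1/4}(4\pi)^{n/2}\lambda(\Si_0)\,r^n$, and the entropy $\lambda(\Si_0)$ is a genuine geometric quantity of $\Si_0$ that cannot be controlled by $\vol(\Si_0)$ alone (a surface with a thin spike can have arbitrarily large entropy with small area). Colding--Minicozzi's statement writes the constant as $V=V(\cM_{-1})$, i.e.\ depending on the full initial hypersurface, for exactly this reason, and the phrasing $N=N(\vol(\Si_0),T)$ here should be read as shorthand for that. Your final paragraph's split into large $r$ versus bounded $r$ does not actually remove the dependence --- for $t$ near $0$ and $r$ small the claim is just an area-ratio bound on the fixed surface $\Si_0$, which can never follow from its volume alone --- but this is an imprecision in the statement of the lemma, not a gap in your proof. (Where the paper actually uses the lemma, times are bounded away from the initial time after rescaling, so the crude estimate $\Theta(p_0,t_0;0)\le (4\pi t_0)^{-n/2}\vol(\Si_0)$ combined with $t_0\ge t$ already suffices and yields a constant depending only on $\vol(\Si_0)$ and the lower bound on the elapsed time.)
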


A hypersurface $\x: \Si^n\ri \RR^{n+1}$ is called a self-shrinker, if it satisfies the equation \beq H=\frac 12 \langle \x, \n \rangle.\label{eq:Selfshrinker}\eeq
By Corollary 2.8 of Colding-Minicozzi \cite{[CM2]}, we have
\begin{lem}\label{lem:selfshrinker}(Corollary 2.8 of \cite{[CM2]}) If $\Si$ is a self-shrinker and the mean curvature is zero, then $\Si$ is a minimal cone. In particular, if $\Si$ is also smooth and embedded, then it is a hyperplane through $0$.
\end{lem}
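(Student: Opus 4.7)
The plan is to exploit the two hypotheses $H=0$ and $H=\frac12\langle \x,\n\rangle$ in combination. Subtracting them immediately gives
\[
\langle \x, \n\rangle \equiv 0 \quad\text{on}\quad \Sigma.
\]
This is the geometric core: the position vector field is everywhere tangent to $\Sigma$.

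The next step is to argue that this tangency forces $\Sigma$ to be a cone centered at the origin. I would consider the radial vector field $X = \x$ on $\RR^{n+1}$, whose flow is the one-parameter group of dilations $\phi_s(\x)=e^s \x$. Since $X$ restricted to $\Sigma$ is tangent to $\Sigma$, the flow $\phi_s$ maps $\Sigma$ into itself (by standard ODE existence and uniqueness for the induced flow on $\Sigma$, together with the fact that the ambient flow $\phi_s$ extends the intrinsic one). Running $s$ both forward and backward shows $e^s \Sigma = \Sigma$ for every $s \in \RR$, so $\Sigma$ is a cone. Combined with $H \equiv 0$ this proves $\Sigma$ is a minimal cone, which is the first assertion.

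For the second assertion, assume in addition that $\Sigma$ is smooth and embedded. Pick any $p \in \Sigma$ and let $T_p\Sigma$ be the tangent hyperplane at $p$. Because $\Sigma$ is a cone, $e^s p \in \Sigma$ for all $s$, so in particular $e^s p \to 0$ as $s\to -\infty$, and the closure of $\Sigma$ contains the origin. On the other hand, applying the dilation $\phi_s$ to $\Sigma$ scales a neighborhood of $p$ to a neighborhood of $e^s p$; letting $s\to -\infty$ and using the smoothness of $\Sigma$ (as an embedded hypersurface) at the origin forces $\Sigma$ to coincide near $0$ with an affine hyperplane, necessarily $T_0\Sigma$, a linear hyperplane through the origin. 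But a smooth cone that agrees with a linear hyperplane in a neighborhood of its vertex must be that entire hyperplane, because scaling a piece through the vertex sweeps out the whole cone. This gives $\Sigma = T_0\Sigma$, a hyperplane through $0$.

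The only real obstacle is the final rigidity step: passing from ``tangent plane at the vertex'' to ``$\Sigma$ equals that hyperplane globally.'' The cleanest way is the dilation-blow-up argument above; alternatively, one can note that if $\Sigma$ were not already a hyperplane then its link $\Sigma \cap S^n$ would be a proper minimal submanifold of $S^n$, and the cone over it would be singular at $0$, contradicting smoothness of $\Sigma$ there. Either way the conclusion is immediate, and the whole argument is short once the identity $\langle \x,\n\rangle=0$ is in hand.
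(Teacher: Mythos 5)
The paper does not supply its own proof of this lemma; it is cited verbatim as Corollary~2.8 of Colding--Minicozzi~\cite{[CM2]}. Your argument is correct and reconstructs the standard proof behind that reference. Subtracting $H=0$ from the self-shrinker equation gives $\langle\x,\n\rangle\equiv 0$, so the position vector field is tangent to $\Sigma$ and, by ODE uniqueness, the intrinsic flow on $\Sigma$ must coincide with the ambient dilations $p\mapsto e^{s}p$, forcing $e^{s}\Sigma=\Sigma$. The one point worth making explicit is global-in-$s$ existence of this flow on $\Sigma$: this uses the standing completeness hypothesis on self-shrinkers together with the arclength estimate
\[
\int_0^{s}\bigl|e^{u}p\bigr|\,du=(e^{s}-1)\,|p|<\infty,
\]
which rules out escape in finite time and also shows the backward trajectory converges to $0\in\overline{\Sigma}$. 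With $H\equiv 0$ this yields a minimal cone. For the rigidity step both of your arguments are sound: the blow-up version amounts to observing that the local graph function $u$ over $T_0\Sigma$ is homogeneous of degree one with $\nabla u(0)=0$, hence $u\equiv 0$; the link version observes that for the cone over $\Sigma\cap S^{n}$ to be smooth at the vertex, the link must be a great sphere. Either completes the proof, and there is no discrepancy with the paper since the paper offers no proof of its own.
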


Observe that the equation (\ref{eq:MCF}) is invariant under the rescaling
\beq
\td \x(p, s)=\la \Big(\x(p, T+\frac {s}{\la^2})-p_0\Big),\quad
\forall \;(p, s)\in \Si\times[-T\la^2, 0), \label{eq:D001}
\eeq where $\la>0$. In fact, under the rescaling (\ref{eq:D001}) we have the relations
$$\td A_{ij}=\la A_{ij}, \quad \td g_{ij}=\la^2 g_{ij},\quad
\td H=\frac 1{\la}H, \quad |\td A|=\frac 1{\la}|A|, $$
where $\td A, \td g$ and $\td H$ denote the second fundamental form, the induced metric and the mean curvature of the surface $\td \Si=\td \x(\Si)$ respectively.
Moreover, by direct calculation we have

\begin{lem} \label{lem:invariant} Let $\Si^2$ be a smooth surface in $\RR^3$. The area ratio and the $L^2$ norm of the second fundamental form are invariant under (\ref{eq:D001}).
Namely, we have  $$\frac {\Area_{g}(B_r(p)\cap \Si)}{\pi r^2}=\frac {\Area_{\td g}(B_{\td r}(\td p)\cap \td \Si)}{\pi {\td r}^2}$$
and
$$\int_{B_r(p)\cap \Si}\,|A|^2\,d\mu=\int_{B_{\td r}(\td p)\cap \td \Si}\,|\td A|^2\,d\td \mu,$$ where
$\td r=\la r,\;\td p=\la(p-p_0) $ and $\td \Si=\la(\Si-p_0).$
\end{lem}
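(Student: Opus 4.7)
The statement is a routine scaling verification, and my plan is to simply check both identities by tracking how each geometric quantity transforms under the affine change of variable $\y = \la(\x - p_0)$ in $\RR^3$, combined with the time reparameterization $s = \la^2(t-T)$. The time reparameterization is irrelevant for these two identities since neither involves time derivatives, so effectively I only need to track the spatial rescaling by factor $\la$ together with the translation by $p_0$.

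First I would record the transformation laws. The translation by $p_0$ preserves all intrinsic and extrinsic quantities and sends $B_r(p)\cap\Si$ to $B_r(p-p_0)\cap(\Si-p_0)$. The rescaling $\y = \la\x$ then sends $B_r(p-p_0)$ to $B_{\la r}(\la(p-p_0)) = B_{\td r}(\td p)$, so the subset of the new surface inside the new ball is exactly the image of $B_r(p)\cap\Si$. The induced metric becomes $\td g = \la^2 g$, so the area form satisfies $d\td\mu = \la^2\, d\mu$, while $\td g^{ij} = \la^{-2}g^{ij}$ and $\td A_{ij} = \la A_{ij}$ (as already stated in the excerpt, since normals are preserved and the shape operator transforms accordingly).

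For the area ratio I then compute
\[
\frac{\Area_{\td g}(B_{\td r}(\td p)\cap\td\Si)}{\pi \td r^2}
= \frac{\la^2\,\Area_g(B_r(p)\cap\Si)}{\pi \la^2 r^2}
= \frac{\Area_g(B_r(p)\cap\Si)}{\pi r^2}.
\]
For the $L^2$ norm of the second fundamental form I use $|\td A|^2 = \td g^{ij}\td g^{kl}\td A_{ik}\td A_{jl} = \la^{-4}\cdot\la^2\,|A|^2 = \la^{-2}|A|^2$, so that
\[
|\td A|^2\, d\td\mu = \la^{-2}\cdot\la^2\,|A|^2\, d\mu = |A|^2\, d\mu,
\]
and integrating over the corresponding regions gives the second claim. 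There is no real obstacle here; the only thing to be careful about is matching the scaling exponents correctly, in particular noting that the conformal factor $\la^2$ in the area element exactly cancels the factor $\la^{-2}$ coming from $|\td A|^2$ in dimension two, which is why the statement is specific to surfaces in $\RR^3$.
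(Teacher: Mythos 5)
Your proposal is correct and follows essentially the same route the paper takes: the paper merely records the scaling relations $\td g_{ij}=\la^2 g_{ij}$, $\td A_{ij}=\la A_{ij}$, $|\td A|=\la^{-1}|A|$ immediately before the lemma and declares the result ``by direct calculation,'' which is exactly the computation you carry out. Your observation that the cancellation $|\td A|^2\,d\td\mu=|A|^2\,d\mu$ hinges on $n=2$ is the right thing to flag and matches why the lemma is stated only for surfaces in $\RR^3$.
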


Now we recall some facts on compactness of immersed hypersurfaces in $\RR^{n+1}$. For any $B_R(p)\subset \RR^{n+1}$, we say an immersed (or embedded) hypersurface $\Si^n\subset \RR^{n+1}$ properly immersed (or embedded) in $B_R(p)$, if either $\Si$ is closed or $\p \Si$ has distance at least $R$ from the point $p$. We say that a sequence $\{\Si_i\}$ converges in $C_{\mathrm{loc}}^{k, \al}$ topology to a hypersurface $\Si$, if for any $p\in \Si$ each $\Si_i$ is locally (near $p$) a graph over the tangent space $T_p\Si$ and the graph of $\Si_i$ converges to the graph of $\Si$ in the usual $C_{\mathrm{loc}}^{k, \al}$ topology.

The following theorem is well known and it shows the convergence of properly immersed surfaces under the second fundamental form bound. The readers are referred to \cite{[White1]} for the details.

\begin{theo}[\textbf{Compactness of minimal surfaces}]\label{theo:minimalcompactness} Let $\{\Si_i^n\}$ be a sequence of smooth, properly immersed hypersurfaces in $B_R(p)\subset \RR^{n+1}.$
If the second fundamental form $\sup_{\Si_i\cap B_R(p)}|A_i|\leq \La$ for a constant $\La>0$ and all $i\geq 1$, then there is a subsequence of $\{\Si_i^n\}$ converges in $C^{1, \frac 12}$
topology, possibly with multiplicities, to an immersed hypersurface $\Si_{\infty}$.
Moreover, if the hypersurfaces $\{\Si_i^n\}$ are minimal, then the convergence is in smooth topology.
\end{theo}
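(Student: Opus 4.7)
The plan is to reduce the global statement to local graphical representations and then assemble the limit by a standard diagonal argument, using the $|A|$-bound to control the graphical radius and the regularity of the defining functions.

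First, I would establish a uniform graphical radius lemma: there exists $r_0=r_0(\Lambda)>0$ such that for every $i$ and every $q\in\Si_i\cap B_{R-r_0}(p)$, the connected component of $\Si_i\cap B_{r_0}(q)$ containing $q$ can be written as the graph of a $C^{1,1}$ function $u_{i,q}: T_q\Si_i\supset D_{r_0}(0)\to (T_q\Si_i)^\perp$ with $u_{i,q}(0)=0$, $\nabla u_{i,q}(0)=0$, and $\|u_{i,q}\|_{C^{1,1}}\le C(\Lambda)$. This follows because the bound $|A_i|\le\Lambda$ controls the rate at which the tangent plane tilts as one moves along $\Si_i$, so the implicit function theorem applied to the nearest-point projection onto $T_q\Si_i$ gives such a graph on a definite ball (see, e.g., \cite{[White1]}). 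The $C^{1,1}$ bound comes directly from expressing $|A|^2$ in graph coordinates, which bounds $|\nabla^2 u_{i,q}|$ in $L^\infty$.

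Next I would extract a convergent subsequence locally. Using Lemma \ref{lem:vol}-type volume upper bounds (which are automatic once $\Si_i$ is locally graphical with bounded gradient, since the area of each sheet over a fixed disk is uniformly bounded), the number of sheets of $\Si_i$ passing through any small ball is uniformly bounded. Fix a countable dense subset $\{p_k\}$ of $B_R(p)$; for each $k$ with $B_{r_0/2}(p_k)$ meeting some $\Si_i$, the sheets there correspond to uniformly many graphs $u_{i,k}^{(j)}$ on a common disk, each lying in a bounded set of $C^{1,1}$. The compact embedding $C^{1,1}\hookrightarrow C^{1,1/2}$ together with Arzelà--Ascoli gives $C^{1,1/2}$-convergent subsequences on each ball, and a diagonal argument yields a subsequence converging on every ball simultaneously. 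Two sheets of $\Si_i$ may collapse to the same limit graph, which is exactly the source of the multiplicity in the limit. The collection of limit graphs glues, via the convergence on overlaps, to a properly immersed $C^{1,1/2}$ hypersurface $\Si_\infty$ in $B_R(p)$, with the convergence $\Si_i\to\Si_\infty$ being $C^{1,1/2}$ in the sense defined just before the statement.

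For the minimal case, in any graph chart $u_{i,q}$ satisfies the quasilinear elliptic minimal surface equation
\[
\mathrm{div}\!\left(\frac{\nabla u_{i,q}}{\sqrt{1+|\nabla u_{i,q}|^2}}\right)=0,
\]
and the already-established $C^{1,1/2}$ bound makes this a linear uniformly elliptic equation with H\"older coefficients for $u_{i,q}$. Schauder estimates then upgrade the $u_{i,q}$ to uniformly bounded $C^{k,\alpha}$ norms on a slightly smaller ball for every $k$, and bootstrapping yields uniform $C^\infty$ bounds on compact subsets. Passing to a further diagonal subsequence, the convergence $\Si_i\to\Si_\infty$ is smooth on each chart, hence smooth globally.

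The main obstacle I would expect is bookkeeping the multiplicities when gluing the local graphs: one must verify that if two sheets of $\Si_i$ coalesce in the limit over one chart, they must coalesce consistently over overlapping charts, so that the limit has a well-defined multiplicity function on the components of $\Si_\infty$. This is handled by observing that the number of preimages in $\Si_i$ of any point is locally constant in $i$ (for $i$ large) and varies continuously with the base point in $\Si_\infty$, hence is locally constant on $\Si_\infty$; everything else is standard.
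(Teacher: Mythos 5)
The paper does not actually prove this theorem; it cites White's lecture notes \cite{[White1]} and moves on, so there is no in-paper argument to compare against. Your outline is the standard one and is mostly sound, but there is a genuine gap in the step where you bound the number of sheets. You write that a Lemma~\ref{lem:vol}-type area bound is ``automatic once $\Si_i$ is locally graphical with bounded gradient, since the area of each sheet over a fixed disk is uniformly bounded,'' and then use this to deduce a uniform bound on the sheet count. This is circular: each sheet has area bounded \emph{above and below} by fixed constants over a graphical disk, so a uniform bound on the \emph{total} area in a ball is equivalent to a uniform bound on the number of sheets — you cannot get one from the other for free. The hypotheses of the theorem give only $|A_i|\le\La$; a properly immersed hypersurface with bounded second fundamental form can have arbitrarily (indeed infinitely) many sheets through a small ball — think of a cylinder over a curve that spirals with curvature tending to $1$ and accumulates on a limit circle. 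With only the $|A|$ bound, the local limit you extract by Arzelà–Ascoli is in general a $C^{1,\alpha}$ lamination, whose leaf space need not be finite or even countable, rather than an immersed hypersurface with an integer multiplicity function.

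To repair this you must either (i) add an area hypothesis of the form $\Area(\Si_i\cap B_r(q))\le N r^n$ — which is indeed present in every application the paper makes of this theorem (via Lemma~\ref{lem:vol} and Property (4) of Definition~\ref{defi:refined}) — and then the lower area bound per graphical sheet gives the finite sheet count you need, or (ii) weaken the conclusion to convergence to a lamination and note that the finite-multiplicity conclusion is recovered whenever an area bound is available. Once this is fixed, the rest of your argument (uniform graphical radius from $|A|\le\La$, $C^{1,1}\hookrightarrow C^{1,1/2}$ compactness plus a diagonal argument, Schauder bootstrap on the minimal surface equation to upgrade to $C^\infty$ in the minimal case, and the continuity-of-sheet-count observation to glue charts consistently) is correct and standard.
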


To study the convergence of mean curvature flow, we define the convergence of a sequence of one-parameter hypersurfaces as follows.

\begin{defi}
We say that a sequence of one-parameter smoothly immersed hypersurfaces $\{\Si^n_{i, t}, -1<t<1\}$ in $\RR^{n+1}$ converges in smooth topology, possibly with multiplicities, to a limit flow $\{\Si_{\infty, t}, -1<t<1\}$ away from a space-time singular set $\cS\subset \RR^{n+1}\times (-1, 1)$, if for any $t\in (-1, 1)$, any $p\in \Si_{\infty, t}\b\cS_t$ and large $i$, there exists $r>0$ and $\ee>0$ such that the hypersurface $\Si_{i, s}\cap B_r(p)$ with $s\in [t-\ee, t+\ee]$  can be  written as a collection of graphs of smooth functions $\{u_i^1(x, s), u_i^2(x, s), \cdots, u_i^N(x, s)\}$ over the tangent plane of $\Si_{\infty, t}$ at the point $p$. Moreover, for each $k\in \{1, 2, \cdots, N\}$ the functions $u_i^k(x, s)$ converges smoothly in $x$ and $s$ as $i\ri +\infty$.

In the above definition,  $\cS_t$ is defined by
 $\cS_t=\{x\in \RR^{n+1}\,|\,(x, t)\in \cS\}$. If $\cS_t$ is independent of $t$, then we can also replace the space-times singular set $\cS$ simply  by $\cS_{t_0}$ for some $t_0.$

\end{defi}

The following compactness result of mean curvature flow is well-known. See, for exmaple, page 481-482 of \cite{[ChenYin]} for a detailed proof.

\begin{theo}[\textbf{Compactness of mean curvature flow}]\label{theo:MCFcompactness}Let $\{(\Si_i^{n}, \x_i(t)), -1< t< 1 \}$ be a sequence of
mean curvature flow properly immersed in $B_R(0)\subset \RR^{n+1}$(i.e. for each $t\in (-1, 1)$ the hypersurface $\Si_{i, t}$ is properly immersed in $B_R(0)$).
Suppose that $$\sup_{\Si_{i, t}\cap B_R(0)}|A|(x, t)\leq \La,\quad \forall\; t\in (-1, 1)$$ for some $\La>0.$
 Then   a subsequence of $\{\Si_{i, t}\cap B_R(0), -1< t< 1 \}$ converges in smooth topology to a smooth  mean curvature flow $\{\Si_{\infty, t}, -1<t<1\}$ in $B_R(0)$.
\end{theo}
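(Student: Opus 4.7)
The plan is to combine the time-slice compactness from Theorem~\ref{theo:minimalcompactness} with interior parabolic regularity applied to the mean curvature flow equation in local graph coordinates. The uniform bound $|A|\le \La$ gives, at each fixed time, $C^{1,1/2}$ subconvergence with possible multiplicity, and the real task is to upgrade this to smooth convergence in space-time.

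First I would fix a countable dense set $\{t_k\}\subset(-1,1)$. At each $t_k$, Theorem~\ref{theo:minimalcompactness} furnishes a subsequence along which $\Si_{i,t_k}\cap B_R(0)$ converges in $C^{1,1/2}$, possibly with multiplicity, to an immersed limit $\Si_{\infty,t_k}$; a diagonal extraction gives one subsequence working simultaneously for every $k$. For a point $p\in \Si_{\infty,t_0}$, a small ball $B_r(p)$, and tangent plane $P=T_p\Si_{\infty,t_0}$, each sheet of $\Si_{i,t_0}\cap B_r(p)$ is, for large $i$, a $C^{1,1/2}$-small graph $u_i^a(\cdot,t_0)$ over $P$. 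Since $|\p_t \x_i|=|H|\le n\La$ and $|A|\le \La$ pointwise in space-time, the elementary estimate $|\x_i(q,t)-\x_i(q,t_0)|\le n\La|t-t_0|$ lets one choose $\ee=\ee(\La,r)>0$ so that each sheet remains a graph over $P\cap B_{r/2}(p)$ with uniformly bounded gradient for $t\in[t_0-\ee,t_0+\ee]$.

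Second, in this graph representation the flow becomes the quasilinear uniformly parabolic equation
\[
\p_t u_i^a \;=\; \sqrt{1+|\Na u_i^a|^2}\,\div\!\left(\frac{\Na u_i^a}{\sqrt{1+|\Na u_i^a|^2}}\right),
\]
and $|A|\le \La$ translates into an $L^{\infty}$ bound on the spatial Hessian $\Na^2 u_i^a$. Krylov--Safonov and Schauder iteration applied to the linearized equations satisfied by derivatives of $u_i^a$ (equivalently, interior Ecker--Huisken type estimates for MCF) then produce uniform $C^m$ bounds on every compact parabolic subcylinder, for each $m\ge 0$. Arzel\`a--Ascoli yields a smoothly convergent subsequence of each sheet $u_i^a\ri u_\infty^a$, and the limits solve the same equation, so the sequence of flows converges smoothly, possibly with multiplicity, to a smooth mean curvature flow in a parabolic neighborhood of $(p,t_0)$.

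Finally, cover the space-time regular part of the limits by countably many such graph patches, apply one more diagonal extraction, and obtain a subsequence converging in the sense of the definition above on all of $B_R(0)\times(-1,1)$; this produces the desired limiting flow $\{\Si_{\infty,t}\}$. The main technical point, more bookkeeping than genuinely hard, is verifying that the local number and ordering of sheets over $P$ stays constant in $t$ and $i$ for large $i$ and small $|t-t_0|$, so that multiplicity passes coherently to the limit across time; this follows from the $C^{1,1/2}$ closeness of the sheets at $t_0$ combined with the uniform speed bound $|\p_t \x_i|\le n\La$.
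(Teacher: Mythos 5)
The paper does not actually prove this theorem; it only points to pages 481--482 of Chen--Yin for a detailed argument, so there is no in-paper proof to compare against. Your proposal is correct in outline and is the standard argument: a uniform second fundamental form bound gives local graph charts with controlled gradient (Lemma~\ref{lem:graph1}), the speed bound $|\p_t\x_i|=|H_i|\le\sqrt n\,\La$ keeps each sheet a graph over a fixed plane for a uniform time interval, the graph functions satisfy a uniformly parabolic quasilinear equation with an a priori $C^{1,1}$ bound, and Krylov--Safonov plus Schauder iteration upgrades this to uniform $C^m$ parabolic interior estimates (this is exactly the content of the Ecker--Huisken interior estimates). Arzel\`a--Ascoli and a diagonal argument over a locally finite cover of parabolic cylinders then give smooth subconvergence to a limiting flow.

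Two places could use a bit more care. First, the time-slice extraction via Theorem~\ref{theo:minimalcompactness} at a dense set of times is a detour: once you know the sheets are uniformly controlled graphs on a full parabolic cylinder and have $C^m$ interior estimates, you can do a single diagonal extraction over a countable space-time cover, which is cleaner and avoids having to reconcile the a priori time-slice limits with the space-time limit. Second, your closing remark that the number of sheets stays constant in $t$ "follows from the $C^{1,1/2}$ closeness at $t_0$ combined with the speed bound" is slightly too quick: the speed bound controls the position of each sheet but not directly that no two sheets collapse onto each other inside the cylinder during $[t_0-\ee,t_0+\ee]$. The clean way to see constancy is to note that, for each fixed $i$, the flow $\Si_{i,s}\cap B_r(p)$ is a smooth proper MCF, so the sheet count over $P\cap B_{r/2}(p)$ cannot change over a short time interval once the uniform gradient bound is in place; then the $C^{1}$ closeness at the single time $t_0$ pins the count down uniformly in $i$. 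With those two points tidied up, the argument is complete.
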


\section{Weak compactness of the mean curvature flow}

In this section, we follow the arguments of Ricci flow by Chen-Wang in \cite{[CW1]}\cite{[CW2]} and minimal surfaces by Choi-Schoen in \cite{[CS]} to study the weak compactness of mean curvature flow under some geometric conditions. This weak compactness result will be used to prove the convergence of rescaled mean curvature flow in the next section.

\subsection{The pseudolocality theorem}
The pseudolocality type results of the  mean curvature flow  were  studied by Ecker-Huisken  \cite{[EH1]} \cite{[EH2]},   M. T. Wang \cite{[WangMT]}, Chen-Yin \cite{[ChenYin]} and Brendle-Huisken \cite{[BH]}.
However, all these pseudolocality theorems above require the condition that the initial hypersurface can be locally written as a graph of a single-valued function.  In our case, we need to remove this graphic condition since  the flow may converge with multiplicities. Here, we give a different type of pseudolocality theorem under the assumption that the mean curvature along  the flow is uniformly bounded.

\begin{defi}
For any $r>0, p\in \RR^{n+1}$ and $\Si^n\subset \RR^{n+1}$, we denote by {$C_x(B_r(p)\cap \Si)$} the connected component of $ B_r(p)\cap \Si$ containing $x\in \Si. $
\end{defi}

\begin{lem}\label{lem:graph1}(c.f. Lemma 7.1 of \cite{[ChenYin]}) Let $\Si^n\subset  \RR^{n+1}$ be properly embedded in $B_{r_0}(x_0)$ for some $x_0\in \Si$ with
$$|A|(x)\leq \frac 1{r_0}\,\quad x\in B_{r_0}(x_0)\cap \Si. $$
Let $\{x^1\, \cdots, x^{n+1}\}$ be the standard coordinates in $\RR^{n+1}$.
Assume that $x_0=0$ and the tangent plane of $\Si$ at $x_0$ is $x^{n+1}=0.$ Then there is a map
$$u: \Big\{x'=(x^1, \cdots, x^n)\,\Big|\, |x'|<\frac {r_0}{96}\Big\}\ri \RR$$
with $u(0)=0$ and $|\Na u|(0)=0$ such that the connected component containing $x_0$ of $\Si\cap
\{(x', x^{n+1})\in \RR^{n+1}\;|\;|x'|<\frac {r_0}{96}\}$  can be written as a graph $\{(x', u(x'))\,|\,|x'|<\frac {r_0}{96}\}$ and
$$|\Na u|(x')\leq \frac {36}{r_0}|x'|.$$
\end{lem}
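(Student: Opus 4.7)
The strategy is standard: use the bound $|A|\le 1/r_0$ to show that the tangent plane $T_x\Si$ tilts slowly as $x$ moves along $\Si$, and then realize the connected component through $x_0$ as a graph over $T_{x_0}\Si$ by an inverse function theorem plus a connectivity argument.

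First I would quantify the slow tilting. For a unit-speed geodesic $\ga\colon[0,L]\to\Si$ with $\ga(0)=x_0$, viewed as a curve in $\RR^{n+1}$, the acceleration equals $\ga''=A(\ga',\ga')\n$, so $|\ga''|\le 1/r_0$. Integration gives $|\ga'(s)-\ga'(0)|\le s/r_0$, and hence the angle between $T_{\ga(s)}\Si$ and $T_{x_0}\Si$ is at most $s/r_0$. For $s$ smaller than a definite fraction of $r_0$ this angle is bounded strictly below $\pi/2$, so the horizontal projection $\pi\colon\RR^{n+1}\to T_{x_0}\Si$ is nondegenerate on the relevant part of $C_{x_0}(B_{r_0}(x_0)\cap\Si)$.

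Second, I would upgrade this local nondegeneracy to a global graph statement. Let $C$ be the connected component of $x_0$ in $\Si\cap B_{r_0}(x_0)$. By proper embeddedness together with the angle bound, $\pi|_C$ is a local diffeomorphism; lifting radial rays from $0\in T_{x_0}\Si$ to $C$ starting at $x_0$ gives a continuation argument showing that the image contains the disk $\{|x'|<r_0/96\}$, and simple-connectivity of this disk upgrades the covering $\pi|_C$ to a genuine diffeomorphism onto it. This produces the single-valued function $u$ with graph equal to the relevant piece of $\Si$; the normalizations $u(0)=0$ and $\Na u(0)=0$ are built into the choice of coordinates (since $x_0=0$ and $T_{x_0}\Si=\mathrm{span}\{\p_{x^1},\dots,\p_{x^n}\}$).

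Finally, for the gradient bound at a graph point $x=(x',u(x'))$, the angle $\theta(x)$ between $T_x\Si$ and $T_{x_0}\Si$ satisfies $\theta(x)\le d_{\Si}(x_0,x)/r_0$ by step one. Step two also implies that when $|x'|<r_0/96$, the intrinsic distance $d_\Si(x_0,x)$ is bounded by an explicit multiple of $|x'|$ (because $\pi$ is close to an isometry on the component in this regime). Since $|\Na u|=\tan\theta$, combining these estimates yields $|\Na u|(x')\le 36|x'|/r_0$. The main obstacle is not conceptual but numerical: one has to budget constants across three places at once --- the angle-to-slope conversion $|\Na u|=\tan\theta$, the intrinsic-versus-extrinsic distance comparison, and the radius up to which the graph continues before $\pi|_C$ could acquire a critical point. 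The radius $r_0/96$ is simply a conservative choice for which all three estimates close up with the clean coefficient $36$ in the gradient bound; sharper constants are not needed for the subsequent applications.
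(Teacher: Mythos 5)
The paper does not prove this lemma itself; it simply cites Lemma~7.1 of Chen--Yin \cite{[ChenYin]}. Your outline is the standard argument one expects there, and it is essentially correct: the $|A|$-bound controls the tilt of the tangent plane, a continuation/lifting argument over radial rays produces a single-valued graph, and a bootstrap closes the gradient estimate with plenty of room in the constants $96$ and $36$.

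One imprecision worth fixing: from $\gamma''=A(\gamma',\gamma')\n$ and hence $|\gamma'(s)-\gamma'(0)|\le s/r_0$, it does \emph{not} immediately follow that the angle between the full hyperplanes $T_{\gamma(s)}\Si$ and $T_{x_0}\Si$ is at most $s/r_0$ --- you have only controlled the drift of a single tangent direction, not of the whole $n$-plane. The clean version of step one integrates the Weingarten relation $\tfrac{d}{ds}\,\n(\gamma(s)) = -W(\gamma'(s))$, giving $|\n(\gamma(s))-\n(x_0)|\le s/r_0$; since the angle between two hyperplanes in $\RR^{n+1}$ is precisely the angle between their unit normals, this is exactly what the later steps need (up to the harmless distinction between $\theta$ and $2\sin(\theta/2)$, which the generous constants absorb). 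With that substitution the bootstrap in steps two and three goes through as you describe: assuming $\tan\theta\le 36|x'|/r_0\le 3/8$ on the graph up to radius $\rho\le r_0/96$ gives arc length of the lift $\lesssim 1.07\,\rho$, hence $|\Delta\n|\lesssim 1.07\,\rho/r_0$ and $\tan\theta\lesssim 1.1\,\rho/r_0$, far inside the asserted bound, so the graph and estimate extend to the full disk.
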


Using Lemma \ref{lem:graph1}, we show that the local area ratio of the surface is very close to $1$.

\begin{lem}\label{lem:ratio}Suppose that $\Si^n\subset B_{r_0}(p)\subset\RR^{n+1}$ is a hypersurface with $\p \Si\subset \p B_{r_0}(p)$ and
$$ \sup_{\Si}\,|A|\leq \frac 1{r_0}.$$
For any $\dd>0$, there is a constant $\rho_0=\rho_0(r_0, \dd)$ such that for any $r\in (0, \rho_0)$ and any $x\in B_{\frac {r_0}{2}}(p)\cap \Si$ we have
 \beq
\frac {\vol_{\Si}(C_x(B_r(x)\cap \Si))}{\oo_n r^n}\leq 1+\dd.\label{eq:G007}
\eeq

\end{lem}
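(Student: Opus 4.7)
The plan is to reduce to a local graph representation via Lemma \ref{lem:graph1} and then apply the graph area formula, exploiting the fact that the gradient bound on the graphing function tends to zero on small scales.

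First I would fix $x \in B_{r_0/2}(p) \cap \Si$ and note that $B_{r_0/2}(x) \subset B_{r_0}(p)$, so the hypothesis $|A| \leq 1/r_0 < 2/r_0$ persists on $B_{r_0/2}(x)$. Applying Lemma \ref{lem:graph1} at $x$ with radius $r_0/2$ in place of $r_0$, I obtain (after choosing coordinates so that $x$ is the origin and $T_x\Si = \mathrm{span}\{\p_{x^1},\dots,\p_{x^n}\}$) a function $u$ defined on $\{|x'| < r_0/192\}$ with $u(0) = 0$, $\Na u(0) = 0$, and
$$|\Na u|(x') \leq \frac{72}{r_0}\,|x'|,$$
such that the graph $G = \{(x',u(x')) : |x'| < r_0/192\}$ is the connected component of $\Si \cap \{|x'| < r_0/192\}$ containing $x$. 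This is the natural object against which to compare $C_x(B_r(x)\cap\Si)$.

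The key topological step is to show $C_x(B_r(x)\cap\Si) \subseteq G$ whenever $r < r_0/192$. Any point $y \in B_r(x)$ satisfies $|y_{T_x\Si}| \leq |y-x| < r < r_0/192$, so $B_r(x) \cap \Si \subseteq \{|x'| < r_0/192\} \cap \Si$. Since $G$ is a connected component of the latter set (hence both open and closed in it), the intersection $G \cap B_r(x)$ is open and closed inside $\Si \cap B_r(x)$; containing the point $x$, it must therefore contain the connected component $C_x(B_r(x)\cap\Si)$. I expect this to be the most delicate part, because one must rule out the component $C_x$ wrapping back into nearby sheets of $\Si$; the $r < r_0/192$ choice is exactly what confines $B_r(x)$ inside the slab where Lemma \ref{lem:graph1} controls the topology.

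Having $C_x(B_r(x)\cap\Si) \subseteq G \cap B_r(x)$, the graph area formula yields
$$\vol_{\Si}\bigl(C_x(B_r(x)\cap\Si)\bigr) \leq \int_{D} \sqrt{1 + |\Na u|^2}\,dx',$$
where $D = \{x' : |x'|^2 + u(x')^2 < r^2\} \subseteq \{|x'| < r\}$ has $n$-dimensional Lebesgue measure at most $\oo_n r^n$. Using $|\Na u| \leq 72r/r_0$ on $D$,
$$\vol_{\Si}\bigl(C_x(B_r(x)\cap\Si)\bigr) \leq \sqrt{1 + (72r/r_0)^2}\,\oo_n r^n.$$
Choosing $\rho_0 = \rho_0(r_0,\dd) \in (0, r_0/192)$ small enough that $\sqrt{1 + (72\rho_0/r_0)^2} \leq 1 + \dd$ then yields the desired inequality (\ref{eq:G007}) for every $r \in (0,\rho_0)$ and every $x \in B_{r_0/2}(p)\cap\Si$, completing the proof.
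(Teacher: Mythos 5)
Your proof follows essentially the same route as the paper: apply Lemma~\ref{lem:graph1} at scale $r_0/2$ to write the relevant piece of $\Si$ as a graph with $|\Na u|(x')\leq (72/r_0)|x'|$, then bound the area via the graph formula. You are a bit more careful than the paper's write-up (you spell out the containment $C_x(B_r(x)\cap\Si)\subseteq G$ and correctly state the area comparison as an inequality rather than an equality), but the argument is the same.
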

\begin{proof}By Lemma \ref{lem:graph1}, for any $x\in B_{\frac {r_0}{2}}(p)\cap \Si$ the component $C_x(B_{\rho_0}(x)\cap \Si)$ with $\rho_0=\frac {r_0}{192}$ can be written as a graph of a function $u$ over the tangent plane at $x$, which we assume to be $P=\{(x_1, \cdots, x_n, x_{n+1})\in \RR^{n+1}\;|\; x_{n+1}=0\}$,   with $|\Na u|(x')\leq \frac {72}{r_0}|x'|$ where $x'=(x_1, \cdots, x_n).$   For any $r\in (0, \rho_0)$,  the volume ratio of $C_x(B_r(x)\cap \Si)$ is given by
$$\frac {\vol_{\Si}(C_x(B_r(x)\cap \Si))}{\oo_n r^n}\leq\frac 1{\oo_n r^n}\int_{B_r(x)\cap P}\,\sqrt{1+|\Na u|^2}\,d\mu\leq \sqrt{1+\frac {5184}{r_0^2}r^2}.  $$
Thus, we can choose $r$ sufficiently small such that (\ref{eq:G007}) holds. The lemma is proved.

\end{proof}

The next result shows that we can control the local volume ratio along the mean curvature flow  with bounded mean curvature.

\begin{lem}\label{lem:vol2}  Let $\{(\Si^n, \x(t)), -T\leq t\leq T\}$ be a smooth embedded mean curvature flow (\ref{eq:MCF}) with
$
\max_{\Si_t\times [-T, T]}|H(p, t)|\leq \La.
$
   Then for any $t_1, t_2\in [-T, T]$ we have
\beq
\frac {\vol_{g(t_2)}(C_{p_{t_2}}(B_{r_2}(p_{t_2})\cap \Si_{t_2}))}{\oo_n r_2^n}
\leq f(t_1, t_2, \La, r_2) \frac {\vol_{ g(t_1)}(C_{ p_{t_1}}(B_{ r_1}( p_{t_1})\cap \Si_{t_1}))}{\oo_n  r_1^n} \label{eq:K005}
\eeq where $p_t=\x_t(p, t)$ for some $p\in \Si$ and
\beqs
r_1&=&r_2+2\La |t_2-t_1|,\\
f(t_1, t_2, \La, c, r_2)&=& e^{\La^2|t_2-t_1|} \Big(1+\frac {2\La }{r_2}|t_2-t_1|\Big)^n
\eeqs

\end{lem}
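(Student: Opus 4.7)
The strategy is to transport the volume bound from time $t_2$ back to time $t_1$ along the flow, using two ingredients: (i) the speed bound $|\partial_t \mathbf{x}| = |H| \le \Lambda$ to track how connected components of balls move, and (ii) the standard evolution of the area form $\partial_t d\mu_t = -H^2 d\mu_t$ to control the change in area.

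First I would pull everything back to the parameter space. Let $V := \x_{t_2}^{-1}\bigl(C_{p_{t_2}}(B_{r_2}(p_{t_2})\cap \Sigma_{t_2})\bigr) \subset \Sigma$, a connected open set containing $p$, and consider its image $\x_{t_1}(V) \subset \Sigma_{t_1}$. For any $q \in V$ the flow equation gives
\[
|\x(q,t_1) - \x(q,t_2)| \le \int_{\min(t_1,t_2)}^{\max(t_1,t_2)} |H|(q,s)\,ds \le \Lambda|t_2-t_1|,
\]
and the same bound applies with $q$ replaced by $p$. Combining with $|\x(q,t_2)-\x(p,t_2)| < r_2$ via the triangle inequality yields $|\x(q,t_1) - p_{t_1}| < r_2 + 2\Lambda|t_2-t_1| = r_1$. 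Hence $\x_{t_1}(V) \subset B_{r_1}(p_{t_1}) \cap \Sigma_{t_1}$; since $V$ is connected and contains $p$, we get
\[
\x_{t_1}(V) \subset C_{p_{t_1}}\bigl(B_{r_1}(p_{t_1})\cap \Sigma_{t_1}\bigr).
\]

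Next I would compare the volume forms. From $\partial_t d\mu_t = -H^2 d\mu_t$, one has for any $q \in \Sigma$ the identity
\[
d\mu_{t_2}(q) = \exp\!\Bigl(-\!\!\int_{t_1}^{t_2}\!\!H^2(q,s)\,ds\Bigr) d\mu_{t_1}(q),
\]
so in all cases $d\mu_{t_2} \le e^{\Lambda^2 |t_2-t_1|}\, d\mu_{t_1}$ on $\Sigma$. Integrating over $V$ gives
\[
\vol_{g(t_2)}\!\bigl(C_{p_{t_2}}(B_{r_2}(p_{t_2})\cap \Sigma_{t_2})\bigr) \le e^{\Lambda^2|t_2-t_1|}\, \vol_{g(t_1)}(\x_{t_1}(V)) \le e^{\Lambda^2|t_2-t_1|}\, \vol_{g(t_1)}\!\bigl(C_{p_{t_1}}(B_{r_1}(p_{t_1})\cap \Sigma_{t_1})\bigr).
\]

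Finally I would divide by $\omega_n r_2^n$ and rewrite $r_2^{-n} = (r_1/r_2)^n\, r_1^{-n}$, noting that $r_1/r_2 = 1 + 2\Lambda|t_2-t_1|/r_2$. This produces precisely the stated factor $f(t_1,t_2,\Lambda,r_2) = e^{\Lambda^2|t_2-t_1|}\bigl(1+\tfrac{2\Lambda}{r_2}|t_2-t_1|\bigr)^{\!n}$ and completes the proof. The argument is essentially bookkeeping — the only subtle point is step one, namely ensuring that when we pull the connected component back along the flow, we land inside the correct connected component at time $t_1$; this is where connectedness of $V$ and membership of $p$ in it are used.
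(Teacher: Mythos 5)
Your proof is correct and follows essentially the same route as the paper: track the displacement of points (to get the enlargement $r_1 = r_2 + 2\Lambda|t_2-t_1|$ and the inclusion into the right connected component at time $t_1$) and control the area form via $|H|^2 \le \Lambda^2$ (to get the factor $e^{\Lambda^2|t_2-t_1|}$), then divide by $\omega_n r_2^n$. The only cosmetic differences are that you bound each point's displacement directly by integrating the speed $|H|\le\Lambda$ rather than differentiating $|\x(p,t)-\x(q,t)|^2$ as the paper does, and you use the pointwise evolution $\partial_t d\mu_t = -H^2\,d\mu_t$ rather than the integrated volume inequality; both yield the same constants.
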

\begin{proof}Since the mean curvature is bounded, for any $\Om\subset \Si$ we calculate
\begin{align}
\Big|\frac d{dt}\vol_{ g(t)}(\Om)\Big|=\Big|\int_{\Om}\,| H|^2\,d \mu_t\Big|\leq \La^2\vol_{g(t)}(\Om),  \label{eqn:PE24_1}
\end{align}
which implies that
\beq e^{-\La^2 |t_1-t_2|}\vol_{g(t_2)}(\Om)\leq \vol_{ g(t_1)}(\Om)\leq e^{\La^2|t_1-t_2|}\vol_{g(t_2)}(\Om). \label{eq:A205a}\eeq
 Note that for any $p, q\in \Si,$ we have \beqs   -4\La |  \x(p, t)-  \x(q, t)|&\leq &
\pd {}t|  \x(p, t)-  \x(q, t)|^2\\&=&2\langle   \x(p, t)-  \x(q, t), -  H(p, t)\n(p, t)+  H(q, t)\n(q, t)\rangle\\
&\leq &4\La |  \x(p, t)-  \x(q, t)|.
\eeqs
Therefore, we have
$$-2\La\leq \pd {}t|\x(p, t)-  \x(q, t)|\leq 2\La. $$
It follows that
\beq |  \x(p, t_1)-  \x(q, t_1)|\leq |  \x(p, t_2)-  \x(q, t_2)|+2\La |t_1-t_2|.  \label{eqG:001}\eeq
Let $  p_t=  \x(p, t)$. Then we have
$$  \x(t_2)^{-1}(C_{  p_{t_2}}(B_r(  p_{t_2})\cap   \Si_{t_2}))\subset
  \x(t_1)^{-1}(C_{  p_{t_1}}(B_{r+2\La |t_1-t_2|}(  p_{t_1})\cap   \Si_{t_1})).$$
Therefore, we have the  estimates
\beqn &&
\vol_{  g(t_2)}(C_{  p_{t_2}}(B_r(  p_{t_2})\cap   \Si_{t_2}))\nonumber\\
&\leq& \vol_{  g(t_2)}(\x(t_1)^{-1}(C_{  p_{t_1}}(B_{r+2\La e |t_1-t_2|}(  p_{t_1})\cap   \Si_{t_1})))\nonumber\\
&\leq &e^{ \La^2 |t_1-t_2|}\vol_{  g(t_1)}(C_{  p_{t_1}}(B_{r+2\La |t_1-t_2|}(  p_{t_1})\cap   \Si_{t_1})),
\eeqn where we used  (\ref{eq:A205a}) in the last inequality.
For any $x\in \Si^n\subset \RR^{n+1}$ and $r_2>0$, we have
\beqs &&
\frac {\vol_{g(t_2)}(C_{p_{t_2}}(B_{r_2}(p_{t_2})\cap \Si_{t_2}))}{\oo_n r_2^n}\\&\leq&e^{ \La^2 |t_1-t_2|}\cdot \Big(\frac {  r_1}{  r_2}\Big)^n \cdot \frac {\vol_{  g(t_1)}(C_{  p_{t_1}}(B_{  r_1}(  p_{t_1})\cap   \Si_{t_1}))}{\oo_n r_1^n}\\
&=&e^{ \La^2 |t_1-t_2|}\cdot \Big(\frac {  r_1}{  r_2}\Big)^n \cdot \frac {\vol_{ g(t_1)}(C_{ p_{t_1}}(B_{ r_1}( p_{t_1})\cap \Si_{t_1}))}{\oo_n r_1^n},
\eeqs where $r_1, r_2$ satisfy the following relations:
$$
 r_1=r_2+2\La |t_1-t_2|.
$$
The lemma is proved.

\end{proof}

Using the idea of the monotonicity formula of minimal surfaces, we show that the volume ratio is almost monotone if the mean curvature is bounded. See, for example, Proposition 1.12 in Colding-Minicozzi \cite{[CMbook2]}.

\begin{lem}\label{lem:vol3}

Let $\Si^n\subset \RR^{n+1}$ be a properly embedded hypersurface in $B_{r_0}(x_0)$ with $x_0\in \Si$ and $|H|\leq \La$. Then for any $s\in (0, r_0)$ we have
\beq
\frac {\vol_{\Si}(B_s(x_0)\cap \Si)}{\oo_n s^n}\leq e^{\La r_0}\cdot \frac {\vol_{\Si}(B_{r_0}(x_0)\cap \Si)}{\oo_n {r_0}^n}.\nonumber
\eeq
In particular, letting $s\ri 0$  we have
$$\vol_{\Si}(B_{r}(x_0)\cap \Si)\geq e^{-\La r}\oo_n r^n,\quad \forall \,r\in (0, r_0].$$
\end{lem}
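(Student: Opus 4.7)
The plan is to run the standard monotonicity argument for the area ratio, now carrying an extra correction term coming from the bound $|H|\le\La$. Let me write $V(s):=\vol_{\Si}(B_s(x_0)\cap\Si)$ and study the ratio $V(s)/s^n$ as a function of $s\in(0,r_0)$.

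First I would apply the tangential divergence identity for the position vector field $X(y)=y-x_0$ on the hypersurface $\Si$, namely $\div_{\Si}X = n - H\langle y-x_0,\n\rangle$ (this is obtained by splitting $X$ into tangent and normal parts and computing $\div_{\Si}X^N = H\langle y-x_0,\n\rangle$ up to sign). Integrating this identity over $B_s(x_0)\cap\Si$ and using the divergence theorem, the boundary contribution on $\partial B_s(x_0)\cap\Si$ equals $s\int_{\partial B_s(x_0)\cap\Si}|\Na_{\Si}r|\,d\sigma$, where $r(y)=|y-x_0|$, since the outward conormal is $\Na_{\Si}r/|\Na_{\Si}r|$. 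Combining these gives
\beq
s\int_{\p B_s(x_0)\cap\Si}|\Na_{\Si}r|\,d\si \;=\; nV(s)+\int_{B_s(x_0)\cap\Si}H\langle y-x_0,\n\rangle\,d\mu.\no
\eeq
The hypothesis $|H|\le\La$ together with $|y-x_0|\le s$ bounds the last term by $\La s\, V(s)$ in absolute value. On the left, the coarea formula yields $V'(s)=\int_{\p B_s\cap\Si}|\Na_{\Si}r|^{-1}\,d\si\ge \int_{\p B_s\cap\Si}|\Na_{\Si}r|\,d\si$, because $|\Na_{\Si}r|\le 1$. Putting everything together yields the differential inequality
\beq
sV'(s)\;\ge\; nV(s)-\La sV(s),\qquad\text{i.e.}\qquad \frac{d}{ds}\Big[\ln V(s)-n\ln s+\La s\Big]\ge 0.\no
\eeq

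Next I would integrate this monotonicity from $s$ to $r_0$. This shows that $e^{\La s}V(s)/s^n$ is non-decreasing in $s$, so
\beq
\frac{V(s)}{\oo_n s^n}\;\le\; e^{\La(r_0-s)}\,\frac{V(r_0)}{\oo_n r_0^n}\;\le\; e^{\La r_0}\,\frac{V(r_0)}{\oo_n r_0^n},\no
\eeq
which is the desired estimate. For the second assertion, since $\Si$ is a smooth embedded hypersurface through $x_0$, the Euclidean density at $x_0$ is one, i.e.\ $\lim_{s\to 0^+}V(s)/(\oo_n s^n)=1$; using this as the limiting lower bound of the non-decreasing quantity $e^{\La s}V(s)/(\oo_n s^n)$ at $s=0$ gives $e^{\La r}V(r)/(\oo_n r^n)\ge 1$, i.e.\ $V(r)\ge e^{-\La r}\oo_n r^n$ for every $r\in(0,r_0]$.

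The argument is entirely classical; the only subtlety is the careful sign bookkeeping in $\div_{\Si}(y-x_0)$ and the fact that once the position vector is used, the $|H|$-term enters linearly in $s$, which is exactly what allows a clean exponential correction factor $e^{\La r_0}$ rather than something worse. No step here should present any serious difficulty given the hypotheses.
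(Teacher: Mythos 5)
Your proposal is correct and follows the same route as the paper: both differentiate the tangential divergence identity $\Delta_\Si|x-x_0|^2 = 2n - 2H\langle x-x_0,\n\rangle$ together with the coarea formula to obtain the differential inequality $\frac{d}{ds}\bigl(e^{\La s}\,s^{-n}V(s)\bigr)\ge 0$, then integrate and use density $1$ at the smooth point $x_0$ for the lower bound. (The sign of the $H$-term in your Stokes identity is flipped relative to the paper's convention, but as you note this is immaterial since only $|H|\le\La$ is used.)
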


\begin{proof}

Note that the function $f(x)=|x-x_0|$ satisfies the identity
$$\Delta_{\Si}f^2=2n-2H \langle x-x_0,\n \rangle.$$
By the Stokes' theorem, we have
\beqn
2n\vol(\{f\leq s\})&=&\int_{\{f\leq s\}}\,\Delta_{\Si}\,f^2+2\int_{\{f\leq s\}}\,H \langle x-x_0,\n \rangle\nonumber \\
&=&2\int_{\{f= s\}}\,|(x-x_0)^T|+2\int_{\{f\leq s\}}\,H \langle x-x_0,\n \rangle.\label{eq:H002}
\eeqn
The coarea formula implies that
\beq \vol(\{f\leq s\})=\int_0^s\,\int_{\{f=r\}}\, |\Na_{\Si}f|^{-1}.\label{eq:H003}\eeq
Combining the identities (\ref{eq:H002})-(\ref{eq:H003}), we have
\beqs
\frac d{ds}\,\Big(s^{-n}\vol(\{f\leq s\})\Big)&=&-ns^{-n-1}\vol(\{f\leq s\})
+s^{-n}\int_{\{f= s\}}\,\frac {|x-x_0|}{|(x-x_0)^T|}\\
&=&s^{-n-1}\int_{\{f= s\}}\,\frac {|(x-x_0)^N|^2}{|(x-x_0)^T|}-s^{-n-1}\int_{\{f\leq s\}}\,H \langle x-x_0,\n \rangle\\
&\geq&-\La \cdot s^{-n}\vol(\{f\leq s\}).
\eeqs
Let $F(s)=s^{-n}\vol(\{f\leq s\}).$ Then for any $s\in (0, r_0)$ we have
$$F(s)\leq F(r_0)e^{\La (r_0-s)}\leq F(r_0)e^{\La r_0}. $$
Thus, the lemma is proved.

\end{proof}

\begin{lem}\label{lem:gap} For any $C>0$, there exists  $\dd=\dd(n, C)>0$ satisfying the following property. Any complete smooth minimal hypersurface $\Si^n\subset \RR^{n+1}$ with bounded second fundamental form $|A|\leq C$ and volume ratio
\beq
\frac {\vol_{\Si}(B_r(p)\cap \Si)}{\oo_n r^n}<1+\dd,\quad \forall \; r>0
\eeq must be a hyperplane.
\end{lem}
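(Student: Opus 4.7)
The natural strategy is a compactness-and-contradiction scheme based on the monotonicity formula for minimal surfaces. Assume the claim fails: for some $C>0$ we have $\dd_i\downarrow 0$ together with complete smooth minimal hypersurfaces $\Si_i\subset\RR^{n+1}$ satisfying $|A_{\Si_i}|\le C$ and $V_{\Si_i}(p_i;r):=\vol(B_r(p_i)\cap\Si_i)/(\oo_n r^n)<1+\dd_i$ for all $r>0$, yet no $\Si_i$ is a hyperplane. Since $\Si_i$ is minimal and not a hyperplane, $\La_i:=\sup_{\Si_i}|A_{\Si_i}|\in(0,C]$. Choose $q_i\in\Si_i$ with $|A_{\Si_i}|(q_i)\ge \La_i/2$ and introduce the rescaled, translated hypersurface
$$\td\Si_i:=\La_i(\Si_i-q_i),$$
which is again complete, smooth and minimal, passes through the origin, and satisfies $|\td A_i|\le 1$ together with $|\td A_i|(0)\ge 1/2$.

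Before taking a limit, one needs the volume-ratio bound to survive the translation and rescaling. Scale invariance is Lemma~\ref{lem:invariant}. For the change of base point, invoke the monotonicity formula for smooth minimal surfaces (Lemma~\ref{lem:vol3} with $\La=0$): at every point $q\in\Si_i$, $r\mapsto V_{\Si_i}(q;r)$ is non-decreasing, and its limit as $r\to\infty$ (the density at infinity) is independent of the base point. The hypothesis at $p_i$ pins this limit at $\le 1+\dd_i$, whence $V_{\Si_i}(q;r)\le 1+\dd_i$ for every $q\in\Si_i$ and every $r>0$. In particular
$$V_{\td\Si_i}(0;r)=V_{\Si_i}(q_i;r/\La_i)\le 1+\dd_i,\qquad \forall\, r>0.$$

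Apply Theorem~\ref{theo:minimalcompactness} to $\{\td\Si_i\}$: since $|\td A_i|\le 1$ and each $\td\Si_i$ is properly embedded in every $B_R(0)$, a subsequence converges in the smooth topology, with some integer multiplicity $k\ge 1$, to a complete smooth minimal hypersurface $\td\Si_\infty\ni 0$ with $|\td A_\infty|\le 1$. Passing to the limit in the preceding volume estimate yields $k\cdot V_{\td\Si_\infty}(0;r)\le 1$ for all $r>0$. The monotonicity formula at the smooth point $0\in\td\Si_\infty$ forces $V_{\td\Si_\infty}(0;r)\ge 1$, so necessarily $k=1$ and $V_{\td\Si_\infty}(0;r)\equiv 1$. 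The equality case of the monotonicity formula identifies $\td\Si_\infty$ with a minimal cone from $0$; combined with smoothness at $0$ this upgrades the cone to a hyperplane through $0$, whence $|\td A_\infty|\equiv 0$. Smooth multiplicity-one convergence then gives $|\td A_i|(0)\to|\td A_\infty|(0)=0$, contradicting $|\td A_i|(0)\ge 1/2$, and the lemma follows.

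The main obstacle I anticipate is the propagation of the volume-ratio bound from the reference point $p_i$ to the rescaling base $q_i$, and the identification of the smooth multi-sheet limit as a multiplicity-one hyperplane. Both are ultimately encoded in the monotonicity formula for minimal surfaces and in its rigid equality case forcing a cone; once these are in hand, everything else is a routine application of the minimal-surface compactness already recorded in Theorem~\ref{theo:minimalcompactness}.
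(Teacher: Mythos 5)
Your argument is correct and is essentially the paper's own proof: assume a sequence of counterexamples, normalize by the supremum of $|A|$ and translate to a point of near-maximal curvature, invoke Theorem~\ref{theo:minimalcompactness} and the rigidity case of the monotonicity formula to conclude the limit is a hyperplane, and contradict the nonvanishing of $|\td A_i|$ at the origin. You are a bit more explicit than the paper on two points it leaves implicit (propagating the volume-ratio bound to the new base point via the basepoint-independence of the density at infinity, and ruling out multiplicity $k>1$ in the limit), but the approach is the same.
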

\begin{proof}Suppose not, there exists a sequence of non-flat minimal hypersurfaces $\Si_i$ with  $|A_i|\leq C$ and
\beq
\frac {\vol(B_r(p_i)\cap \Si_i)}{\oo_n r^n}<1+\dd_i,\quad \forall \; r>0
\eeq where $p_i\in \Si_i$ and $\dd_i\ri 0.$ Since $\Si_i$ are non-flat, we can assume that $|A_i|(p_i)=1. $ By Theorem \ref{theo:minimalcompactness}, a subsequence of $\td \Si_i=\Si_i-p_i$
 converges smoothly to a complete smooth minimal hypersurfaces $\Si_{\infty}$ with $|A_{\infty}|(0)=1$ and volume ratio
\beq \frac {\vol(B_r(0)\cap \Si_{\infty})}{\oo_n r^n}=1, \quad \forall \;r>0. \label{eq:G006}\eeq
(\ref{eq:G006}) implies that $\Si_{\infty}$ is a hyperplane(c.f. Corollary 1.13 of Colding-Minicozzi~\cite{[CMbook2]}), which contradicts the equality $|A_{\infty}|(0)=1$. Thus, the lemma is proved.

\end{proof}

Combining the above results, we show the following pseudolocality theorem.

\begin{theo}[\textbf{Two-sided pseudolocality}]\label{theo:pseudoA}
For any $r_0\in (0, 1], \La, T>0$,  there exist $\eta=\eta(n, \La), \ee=\ee(n, \La)>0$ satisfying
\beq
\lim_{\La\ri 0}\eta(n, \La)=\eta_0(n)>0,\quad  \lim_{\La\ri 0}\ee(n, \La)=\ee_0(n)>0\label{eq:B006}\\
\eeq
and the following properties.  Let $\{(\Si^n, \x(t)), -T\leq t\leq T\} $ be a closed smooth embedded mean curvature flow (\ref{eq:MCF}).  Assume  that
\begin{enumerate}

  \item[(1)]  the second fundamental form satisfies
$|A|(x, 0)\leq \frac 1{r_0}$ for any $ x\in C_{p_0}( B_{r_0}(p_0)\cap \Si_0) $ where $p_0=\x_0(p)$ for some $p\in \Si$;
  \item[(2)] the mean curvature of $\{(\Si^n, \x(t)), -T\leq t\leq T\}$ is  bounded by $\La$.
\end{enumerate}
Then for any $(x, t)$ satisfying
\beq
x\in C_{p_t}(\Si_t\cap B_{\frac 1{16} r_0}(p_0)), \quad t\in \Big[-\frac {\eta r_0^2}{2(\La+\La^2)}, \frac {\eta r_0^2}{2(\La+\La^2)}\Big]\cap [-T, T] \label{eq:B004}
\eeq where $p_t=\x_t(p),$  we have the estimate $$|A|(x, t)\leq \frac {1}{\ee r_0}. $$

\end{theo}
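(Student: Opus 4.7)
The plan is to argue by contradiction via a parabolic blow-up: if the theorem fails we extract a smooth complete minimal hypersurface with identically Euclidean area ratio but nonzero second fundamental form at the origin, contradicting the gap Lemma~\ref{lem:gap}. By Lemma~\ref{lem:invariant} the hypothesis and conclusion scale consistently, so we may normalize $r_0=1$ by rescaling with $\la=1/r_0$; then $|\tilde A|\le 1$ on the initial good component, $|\tilde H|\le \La r_0\le\La$, and the time window becomes $|\tilde t|\le \eta/(2(\La+\La^2))$.

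Suppose the conclusion fails for $r_0=1$. Then there exist sequences $\La_i\to\La_\infty\in[0,\infty)$, $\ee_i\to 0$, and flows $\{(\Si_i^n,\x_i(t))\}$ satisfying (1)--(2), together with violating points $(x_i,t_i)$ in the parabolic neighborhood (\ref{eq:B004}) at which $|A_i|(x_i,t_i)>1/\ee_i\to\infty$. A Hamilton-type point-picking inside the tracked component $C_{p_t^i}(\Si_{i,t}\cap B_{1/8}(p_0^i))$ produces $(y_i,s_i)$ with $Q_i:=|A_i|(y_i,s_i)\to\infty$ and $|A_i|\le 2Q_i$ on a parabolic cylinder of size $\sim Q_i^{-1}$ around $(y_i,s_i)$. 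Translate $y_i$ to the origin and rescale by $\la=Q_i$ as in (\ref{eq:D001}); the rescaled flows satisfy $|\tilde A_i|\le 2$ on $B_{R_i}(0)\times[-R_i^2,0]$ with $R_i\to\infty$, $|\tilde A_i|(0,0)=1$, and $|\tilde H_i|\le\La_i/Q_i\to 0$. Theorem~\ref{theo:MCFcompactness} extracts a smooth subsequential limit flow $\tilde\Si_{\infty,s}\subset\RR^{n+1}$ on $(-\infty,0]$ with $H_\infty\equiv 0$, so $\tilde\Si_{\infty,0}$ is a complete smooth minimal hypersurface of bounded second fundamental form with $|A_\infty|(0,0)=1$.

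The contradiction comes from showing that $\tilde\Si_{\infty,0}$ has Euclidean area ratio at every scale. Applying Lemma~\ref{lem:ratio} at time $0$, for any $\dd>0$ we obtain area ratio $\le 1+\dd$ at all scales $r\le\rho_0(\dd)$ on the initial good component. Propagating this bound forward to time $s_i$ via Lemma~\ref{lem:vol2} introduces the factor $e^{\La_i^2|s_i|}(1+2\La_i|s_i|/r)^n$, which tends to $1$ uniformly at any fixed scale because $|s_i|\le \eta_i/(2(\La_i+\La_i^2))$ forces both $\La_i^2|s_i|$ and $\La_i|s_i|$ to be $O(\eta_i)$. Lemma~\ref{lem:vol3} then propagates the small-scale bound at $(y_i,s_i)$ up to scales of order one, paying only the factor $e^{\La_i}$. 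After rescaling by $Q_i\to\infty$ this means the limit satisfies
\begin{equation*}
\frac{\vol(B_r(x)\cap \tilde\Si_{\infty,0})}{\oo_n r^n}\le 1+\dd
\end{equation*}
for every $\dd>0$ and $r>0$; hence the ratio is identically $1$, and Lemma~\ref{lem:gap} forces $\tilde\Si_{\infty,0}$ to be a hyperplane, contradicting $|A_\infty|(0,0)=1$.

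The main technical obstacle is the connectedness bookkeeping: Lemmas~\ref{lem:graph1}--\ref{lem:ratio} apply only on the component containing the reference point, and Lemma~\ref{lem:vol2} tracks components forward only under the displacement bound $2\La|t_1-t_2|$. The spatial window $B_{r_0/16}(p_0)$ and the time window $\eta r_0^2/(\La+\La^2)$ in (\ref{eq:B004}) are precisely calibrated so that both the spatial displacement and the exponential loss $e^{\La^2|t|}$ stay of order $\eta/(1+\La)$, which remains small for small $\eta$ uniformly in $\La\ge 0$. This uniformity yields the asymptotics (\ref{eq:B006}) as $\La\to 0$, where $\eta_0(n),\ee_0(n)$ are the dimensional pseudolocality constants in the zero-mean-curvature case.
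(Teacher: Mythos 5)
Your overall strategy matches the paper's: a Hamilton-type point-picking to produce a curvature-normalized parabolic blow-up, compactness (Theorem~\ref{theo:MCFcompactness}) to extract a complete smooth minimal hypersurface with $|A_\infty|(0)=1$, area-ratio control transported from time $0$ via Lemmas~\ref{lem:ratio}, \ref{lem:vol2}, \ref{lem:vol3}, and finally the gap Lemma~\ref{lem:gap} to produce a contradiction. That is exactly the paper's Steps 1--4.

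However, the area-ratio propagation step contains a genuine gap. You assert that the propagation factor $e^{\La_i^2|s_i|}(1+2\La_i|s_i|/r)^n$ ``tends to $1$ uniformly at any fixed scale.'' This cannot be established from your setup: with $\eta$ a fixed positive constant (as it must be, since it is a quantity you are trying to exhibit), $|s_i| \sim \eta/(\La+\La^2)$ does not tend to zero, so neither does $\La^2|s_i|$; and if instead you allow $\eta_i \to 0$, then after rescaling by $Q_i$ the scales of interest are $r\sim Q_i^{-1}$, so the factor $(1+2\La|s_i|/r)^n$ contains $|s_i| Q_i$, which is uncontrolled since $\eta_i$ and $Q_i > \ee_i^{-1}$ are independent. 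Relatedly, your invocation of Lemma~\ref{lem:vol3} as ``propagating the small-scale bound at $(y_i,s_i)$ up to scales of order one'' is backwards --- that lemma controls small scales \emph{by} a fixed larger scale (paying $e^{\La \rho_1}$), not the reverse. Finally, the conclusion ``hence the ratio is identically $1$'' does not follow: one cannot send $\dd\to 0$ at the end, since $\rho_0(\dd)$, $\eta_0(\dd,\La)$, and hence the extracted sequence and limit all depend on $\dd$.

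The paper's resolution is precisely the content you relegate to ``calibration'' in your final paragraph: choose $\rho_0$ and $\eta_0$ jointly via the implicit inequalities (\ref{eq:B001})--(\ref{eq:B002}) so that Lemma~\ref{lem:vol2} is applied only once, from scale $\rho_0$ at time $0$ to the \emph{fixed} scale $\rho_1 = \rho_0 - 2\eta_0$ at time $s_i$, with total loss $\le (1+2\dd)/(1+\dd)$; then Lemma~\ref{lem:vol3} carries the bound down to every scale $s\in(0,\rho_1]$ at time $s_i$ with the bounded additional factor $e^{\La\rho_1}\le (1+3\dd)/(1+2\dd)$. After rescaling by $Q_i\to\infty$, the limit's area ratio is $\le 1+3\dd$ at every scale (not $=1$), and one \emph{fixes} $\dd = \dd_0(n)/3$ once and for all so that Lemma~\ref{lem:gap} applies. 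Your writeup does not produce a uniform area-ratio bound at all scales with the argument as stated, and the gap is exactly at the point where (\ref{eq:B001})--(\ref{eq:B002}) are needed.
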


\begin{proof} The proof consists of the following steps:

\emph{Step 1.} Without loss of generality, we assume  $r_0=1$.    By Lemma \ref{lem:ratio} and the assumption (1), for any fixed $\dd>0$ there exists a constant $\td\rho_0=\td \rho_0(\dd)\in (0, \frac 12]$ such that for any $r\in (0, \td\rho_0]$ we have
\beq
\frac {\vol_{g(0)}(C_{y_0}(B_{r}(y_0)\cap \Si_0))}{\oo_n r^n}\leq 1+\dd,\quad \forall \;y_0\in C_{p_0}(B_{\frac 12}(p_0)\cap \Si_0). \label{eq:B007}
\eeq  For any $\rho_0\in (0, \td \rho_0]$, we define  \beq \eta_0(n, \rho_0, \dd):=\sup\Big\{\eta \in (0, \frac 18]\,\Big|\,
e^{\eta}\Big(1+\frac {2\eta}{\rho_0-2\eta}\Big)^n(1+\dd)\leq 1+2\dd\Big\}.\label{eq:B001}
\eeq
For any given $\La>0$, we choose $\rho_0=\rho_0(n, \dd, \La)\in (0, \td \rho_0]$  such that
\beq \rho_0(n, \dd, \La):=\sup\Big\{ \rho_0\in (0, \td \rho_0]\,\Big|\,
e^{\La (\rho_0-2\eta_0(n, \rho_0, \dd))}(1+2\dd)\leq 1+3\dd\Big\}, \label{eq:B002}
\eeq where $\eta_0$ is defined by (\ref{eq:B001}). Note that $\rho_0$ and $\eta_0$ have  positive lower bounds depending only on $n$ and $\dd$  as $\La\ri 0.$

\emph{Step 2.}
By Lemma \ref{lem:vol2}, (\ref{eq:B007}) and (\ref{eq:B001}),  for any $t\in [-T, T]$ with $\La |t|+\La^2 |t|\leq \eta_0$ we have
\beqn
\frac {\vol_{g(t)}(C_{y_t}(B_{\rho_1}(y_t)\cap \Si_t))}{\oo_n \rho_1^n}&\leq& e^{\La^2|t|}\Big(1+\frac {2\La |t|}{\rho_1}\Big)^n
\frac {\vol_{g(0)}(C_{y_0}(B_{\rho_0}(y_0)\cap \Si_0))}{\oo_n \rho_0^n}\nonumber\\
&\leq &  1+2\dd, \label{eq:H004}
\eeqn where $y_t=\x_t(\x_0^{-1}(y_0))$, $y_0$ is any point in $C_{p_0}(B_{\frac 12}(p_0)\cap \Si_0)$ and $\rho_1:=\rho_0-2\eta_0(n, \rho_0, \dd)$. Note that $\rho_1>0$ by (\ref{eq:B001}).
Let $\eta_1=\sqrt{\frac {\eta_0}{2(\La+\La^2)}}$. Then (\ref{eq:H004}) holds for any $t\in [-2\eta_1^2, 2\eta_1^2]\cap [-T, T]$.
By Lemma \ref{lem:vol3}, (\ref{eq:B002}), (\ref{eq:H004}) and the definition of $\rho_1$,  for any $s\in (0,   \rho_1]$ and any $t\in [-2\eta_1^2, 2\eta_1^2]\cap [-T, T]$ we have
\beqn
\frac {\vol_{g(t)}(C_{y_t}(B_{s}(y_t)\cap \Si_t))}{\oo_n s^n}
&\leq &e^{\La \rho_1}\frac {\vol_{g(t)}(C_{y_t}(B_{\rho_1}(y_t)\cap \Si_t))}{\oo_n \rho_1^n}\nonumber\\
&\leq& 1+3\dd.  \label{eq:G005}
\eeqn
Let $\bar \rho:=\frac 14-\eta_0$. Then by the definition of $\eta_0$ we have $\frac 18\leq \bar \rho\leq \frac 14$.   By the assumption (2), we have $p_t\in B_{\bar \rho}(p_0)\cap \Si_t\neq\emptyset$ for any $t\in [-2\eta_1^2, 2\eta_1^2]\cap [-T, T]$ since
$$|p_t-p_0|\leq \La |t|\leq 2\La \eta_1^2=\frac {\eta_0}{1+\La}< \eta_0\leq \frac 18\leq \bar \rho.$$
Using the assumption (2) again,  for any $q_t\in C_{p_t}(B_{\bar \rho}(p_0)\cap \Si_t)$ with  $t\in [-2\eta_1^2, 2\eta_1^2]\cap [-T, T]$  we have $q_0:=\x_0(\x_t^{-1}(q_t))\in C_{p_0}(B_{\frac 12}(p_0)\cap \Si_0)$ since
$$|q_0-p_0|\leq |q_0-q_t|+|q_t-p_0|< \frac 18+\bar \rho\leq\frac 12. $$
Combining this with (\ref{eq:G005}), for any $t\in [-2\eta_1^2, 2\eta_1^2]\cap [-T, T]$ and $q\in C_{p_t}(B_{\bar \rho}(p_0)\cap \Si_t)$ we have
\beq
\frac {\vol_{g(t)}(C_{q}(B_{s}(q)\cap \Si_t))}{\oo_n s^n}\leq 1+3\dd,\quad \forall\;s\in (0,  \rho_1]. \label{eq:G008}
\eeq

\emph{Step 3.}
 Suppose there exist $ \La>0$, a sequence of $\ee\ri 0, \ee\in (0, \eta_1]$ and smooth solutions to the mean curvature flow $\x_t: \Si^n\ri \RR^{n+1}$ for $t\in  [-T, T]$ with $T\geq 2\eta_1^2$ such that $|A|(x, 0)\leq 1$ for any $x\in C_{p_0}(B_1(p_0)\cap \Si_0)$, and there exists $(x_1, t_1)$ satisfying $t_1\in [-\eta_1^2, \eta_1^2]$ and $x_1\in C_{p_{t_1}}(B_{\frac 1{16}}(p_{0})\cap \Si_{t_1})$ such that
\beq Q_1:=|A|(x_1, t_1)>\frac 1{\ee}. \label{eq:I004}\eeq Note that
$p_{t_1}\in B_{\frac 1{16}}(p_{0})\cap \Si_{t_1}$ since
$$|p_{t_1}-p_0|\leq \La \eta_1^2\leq\frac {\eta_0}{2(1+\La)} \leq \frac 1{16}.$$
Fix $K>0$ such that $K>\frac 12 \La \ee$ and $2K\ee<\frac 1{16}$. Check whether there exists a point
\beq (x, t)\in  C_{x_{1, t}}(B_{KQ_1^{-1}}(x_{1})\cap \Si_t)\times [t_1-\frac 12 Q_1^{-2}, t_1] \label{eq:H008}\eeq
satisfying $|A|(x, t)>2Q_1$. Here we define $x_{1, t}=\x_t(\x_{t_1}^{-1}(x_1)).$ Note that $x_{1, t}\in B_{KQ_1^{-1}}(x_{1})\cap \Si_t$ since
$$|x_1-x_{1, t}|\leq \La|t|\leq \frac 12\La Q_1^{-2}<KQ_1^{-1},
\quad \forall\; t\in [t_1-\frac 12 Q_1^{-2}, t_1]. $$
If there is no such point, then we stop. Otherwise, we can find a point, which we denote by $(x_2, t_2)$, satisfying (\ref{eq:H008}) and $
Q_2:=|A|(x_2, t_2)>2Q_1. $ Then we check whether there exists a point
\beq (x, t)\in C_{x_{2, t}}(B_{KQ_2^{-1}}(x_{2})\cap \Si_t)\times [t_2-\frac 12 Q_2^{-2}, t_2]  \label{eq:H009}\eeq
satisfying $|A|(x, t)>2Q_2$. We can also check that $x_{2, t}\in B_{KQ_2^{-1}}(x_{2})\cap \Si_t$. If there is no such point, then we stop. Otherwise, we can find a point which we denote by $(x_3, t_3)$. Repeating the process, we can find a sequence of points $(x_k, t_k)$. Note that
$$t_k\geq t_{1}-\frac 12\Big(Q_{k-1}^{-2}+Q_{k-2}^{-2}+\cdots+Q_1^{-2}\Big)\geq t_1-\ee^2> -2\eta_1^2$$
and the Euclidean distance
\beqs
d(x_k, p_0)&\leq&d(x_k, x_{k-1 })+d(x_{k-1}, x_{k-2})+\cdots+
+d(x_{1}, p_0)\\
&\leq&K\Big(Q_{k-1}^{-1}+Q_{k-2}^{-1}+\cdots+Q_1^{-1}\Big)+d(x_1, p_0)\\
&\leq&2K\ee+\frac 1{16}\\
&\leq &\frac 18\leq \bar \rho,
\eeqs where we choose $K=\ee^{-\frac 12}$ and $\ee$ small.
Since $Q_k:=|A|(x_k, t_k)\geq 2^{k-1}Q_1\ri +\infty$ as $k\ri +\infty$, the process will stop at some finite $k$ and we get a point $(\bar x, \bar t)$ satisfying the following properties:
\begin{itemize}
  \item $(\bar x, \bar t)\in C_{p_{\bar t}}(B_{\bar \rho}(p_0)\cap \Si_{\bar t})\times (-2\eta_1^2, t_1]$;
  \item $|A|(x, t)\leq 2|A|(\bar x, \bar t)$ for any point
  $(x, t)\in C_{\bar x_{, t}}(B_{K\bar Q^{-1}}(\bar x)\cap \Si_{t})\times [\bar t-\frac 12 \bar Q^{-2}, \bar t],$ where $\bar x_{, t}:=\x_t(\x_{\bar t}^{-1}(\bar x))$ and $\bar Q:=|A|(\bar x, \bar t)$. Note that $[\bar t-\frac 12 \bar Q^{-2}, \bar t]\subset [-2\eta_1^2, t_1]$.
\end{itemize}

\emph{Step 4. } We rescale the flow by $$\td \x(p, s)=\bar Q\Big(\x(p, \bar t+\frac s{\bar Q^2})-\bar x\Big),\quad \forall\; (p, s)\in \Si\times [-(T+\bar t)\bar Q^2, (T-\bar t)\bar Q^2].$$
Then the rescaled flow $\td \Si_s:=\td \x_s(\Si)$ is a mean curvature flow satisfying  the following properties:
\begin{itemize}
  \item For any $(x, s)\in C_{\td \x_s(\td \x_0^{-1}(0))}(B_{\ee^{-\frac 12}}(0)\cap \td \Si_s)\times [-\frac 12, 0]$, we have $|A_{\td \Si_s}|(x, s)\leq 2$ and $|A_{\td \Si_0}|(0, 0)=1$;
  \item For any $r\in (0, \ee^{-\frac 12})$ we have the volume ratio
  $$\frac {\vol(C_{0}(B_r(0)\cap \td \Si_0))}{\oo_n r^n}\leq 1+3\dd.$$
  Here we used (\ref{eq:G008}),  (\ref{eq:I004}) and the facts that $K\bar Q^{-1}\leq K Q_1^{-1}\leq \ee^{\frac 12}< \rho_1$ when $\ee$ is small.
  \item The mean curvature of the flow $\td \Si_s$ satisfies $|\td H|\leq \La \bar Q^{-1}$.
\end{itemize}
Since $\ee\ri 0$ and $\bar Q\ri +\infty$, the flow  $C_{\td \x_s(\td \x_0^{-1}(0))}(B_{\ee^{-\frac 12}}(0)\cap \td \Si_s)\times [-\frac 12, 0]$ converges smoothly to a complete smooth minimal surface $\Si_{\infty}$ with $\sup_{\Si_{\infty}}|A_{\Si_{\infty}}|\leq 2$,  $|A_{\Si_{\infty}}|(0)=1$ and volume ratio
$$\frac {\vol(B_r(0)\cap \Si_{\infty})}{\oo_n r^n}\leq 1+3\dd,\quad \forall\; r>0.$$
If we choose $\dd=\frac 13\dd_0$ where $\dd_0=\dd_0(n)$ is the constant in Lemma \ref{lem:gap}, then  $\Si_{\infty}$ is a hyperplane, which contradicts $|A_{\Si_{\infty}}|(0)=1$. The theorem is proved.

\end{proof}

A direct corollary of Theorem \ref{theo:pseudoA} is the following long time pseudolocality theorem.
The long-time-pseudolocality type theorem originates from the study of the K\"ahler Ricci flow by Chen-Wang(c.f. Theorem 1.4 of Chen-Wang~\cite{[CW2]}, or Proposition 4.15 and Remark 5.3 of Chen-Wang~\cite{[CW3]}).
It will be inspiring to compare the following theorem with its K\"ahler Ricci flow counterpart.

\begin{theo}[\textbf{Long-time, two-sided pseudolocality}]\label{theo:pseudoB}
For any $r_0\in (0, 1],  T>0$,  there exist $\dd=\dd(n, r_0, T), \ee=\ee(n)>0$  with the following properties.  Let $\{(\Si^n, \x(t)), -T\leq t\leq T\} $ be a closed smooth embedded mean curvature flow (\ref{eq:MCF}).  Assume  that
\begin{enumerate}

  \item[(1)]  the second fundamental form satisfies
$|A|(x, 0)\leq \frac 1{r_0}$ for any $ x\in C_{p_0}( B_{r_0}(p_0)\cap \Si_0) $ where $p_0=\x_0(p)$ for some $p\in \Si$;
  \item[(2)] the mean curvature of $\{(\Si^n, \x(t)), -T\leq t\leq T\}$ is  bounded by $\dd$.
\end{enumerate}
Then for any $(x, t)\in C_{p_t}(\Si_t\cap B_{\frac 1{16} r_0}(p_0)) \times [-T, T]$ where $p_t=\x_t(p),$  we have the estimate $$|A|(x, t)\leq \frac {1}{\ee r_0}. $$

\end{theo}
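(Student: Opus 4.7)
The plan is to deduce Theorem \ref{theo:pseudoB} directly from Theorem \ref{theo:pseudoA} by exploiting the fact that, when the mean curvature bound is very small, the time interval on which the short-time two-sided pseudolocality estimate is valid becomes very long. The essential input is the limiting behavior recorded in (\ref{eq:B006}), which guarantees that both $\eta(n, \La)$ and $\ee(n, \La)$ stay bounded below by positive constants as $\La \ri 0$. This is precisely the asymptotic information that was carefully built into the statement of Theorem \ref{theo:pseudoA} in order to enable the present corollary.

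Concretely, I would apply Theorem \ref{theo:pseudoA} with $\La$ replaced by the not-yet-chosen small parameter $\dd$. This provides the estimate $|A|(x, t) \leq \frac{1}{\ee(n, \dd) r_0}$ for all $(x, t) \in C_{p_t}(\Si_t \cap B_{\frac{1}{16} r_0}(p_0)) \times I$, where
$$ I = \Big[-\frac{\eta(n, \dd) r_0^2}{2(\dd + \dd^2)}, \frac{\eta(n, \dd) r_0^2}{2(\dd + \dd^2)}\Big] \cap [-T, T]. $$
To force $I = [-T, T]$, I would select $\dd = \dd(n, r_0, T) > 0$ small enough that
$$ \frac{\eta(n, \dd) r_0^2}{2(\dd + \dd^2)} \geq T. $$
Such a choice exists since $\eta(n, \dd) \ri \eta_0(n) > 0$ while $\dd + \dd^2 \ri 0$, so the left-hand side tends to $+\infty$.

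Simultaneously, by shrinking $\dd$ further if necessary, the second limit in (\ref{eq:B006}) allows us to guarantee $\ee(n, \dd) \geq \ee_0(n)/2$. Setting $\ee := \ee(n) = \ee_0(n)/2$, which depends only on the dimension, and substituting these choices into the conclusion of Theorem \ref{theo:pseudoA} yields $|A|(x, t) \leq \frac{1}{\ee r_0}$ for every $(x, t) \in C_{p_t}(\Si_t \cap B_{\frac{1}{16} r_0}(p_0)) \times [-T, T]$, exactly as required.

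In this approach there is no new analytic ingredient; the proof is a pure bookkeeping extraction from Theorem \ref{theo:pseudoA}. The only point deserving attention is conceptual, namely recognizing that a naive iteration of Theorem \ref{theo:pseudoA} in time would produce successively worse constants (since each restart would demand $|A| \lesssim 1/(\ee r_0)$ on a ball of radius $\ee r_0$ and return a bound like $1/(\ee^2 r_0)$, and so on). This degradation is avoided by instead using the small-$\La$ asymptotics to stretch a single application of the short-time theorem over the entire interval $[-T, T]$ at once, which is the whole reason (\ref{eq:B006}) was stated explicitly in the short-time version.
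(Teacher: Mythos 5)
Your proof is correct and takes essentially the same route as the paper: apply Theorem~\ref{theo:pseudoA} with $\La=\dd$, then use the limits in (\ref{eq:B006}) to choose $\dd=\dd(n,r_0,T)$ small enough that the guaranteed time interval absorbs $[-T,T]$ and that $\ee(n,\dd)$ stays bounded below by a dimensional constant. The only cosmetic difference is that the paper imposes the slightly stronger (and harmless) requirement $\tfrac{\eta(n,\dd)r_0^2}{2(\dd+\dd^2)}\geq 2T$ rather than your $\geq T$; otherwise the arguments coincide.
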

\begin{proof} We apply Theorem \ref{theo:pseudoA} for $\La=\dd$, then we get the constant $\eta(n, \dd)$ and $\ee(n, \dd)$. By  (\ref{eq:B004}), the conclusion holds for any $t\in [-T, T]$ if
\beq \frac {\eta(n, \dd) r_0^2}{2(\dd+\dd^2)}\geq 2T.\label{eq:B005} \eeq
Since $\eta_0(n)=\lim_{\dd\ri 0}\eta(n, \dd)>0,$
 there exists a constant $\dd=\dd(n, r_0, T)$ such that (\ref{eq:B005}) holds. Note that $\lim_{\dd\ri 0}\ee(n, \dd)=\ee_0(n)>0$ by (\ref{eq:B006}). Thus, the theorem is proved.

\end{proof}

\subsection{Energy concentration property}

In \cite{[CS]}, Choi-Schoen showed the following energy concentration property for minimal surfaces. This property says that  the  energy near a point with large curvature cannot be small .
\begin{lem}\label{Choi-Schoen}(Choi-Schoen \cite{[CS]}) Fix $\rho\leq 1.$ There is a number
$\ee_0>0$ such that if  $\Si^2\subset \RR^3$ is a  minimal surface with $\p \Si
\subset\p B_{\rho}(x)$  with $|A|(x)\geq \rho^{-1}$, then
$$\int_{B_{\frac {\rho}2}(x)\cap \Si}\,|A|^2\,d\mu\geq \ee_0.$$
\end{lem}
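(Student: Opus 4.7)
The plan is to argue by contradiction, combining a Hamilton-type point-picking argument with the smooth compactness theorem for minimal surfaces (Theorem~\ref{theo:minimalcompactness}) and the crucial scale invariance of $\int |A|^2$ in dimension two (Lemma~\ref{lem:invariant}).

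Suppose the lemma fails. Then there exist sequences $\rho_i \in (0,1]$, points $x_i$, and smooth minimal surfaces $\Sigma_i^2 \subset \RR^3$ with $\partial \Sigma_i \subset \partial B_{\rho_i}(x_i)$, $|A_i|(x_i) \geq \rho_i^{-1}$, and $\int_{B_{\rho_i/2}(x_i) \cap \Sigma_i} |A_i|^2 \, d\mu \to 0$. Translating and rescaling via (\ref{eq:D001}), which preserves the $L^2$ norm of $|A|$ by Lemma~\ref{lem:invariant}, we reduce to $x_i = 0$, $\rho_i = 1$, $|A_i|(0) \geq 1$, and $\int_{B_{1/2}(0) \cap \Sigma_i} |A_i|^2 \, d\mu \to 0$.

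Next apply point-picking. On $B_{1/4}(0) \cap \Sigma_i$ consider the function $f_i(y) = (\tfrac{1}{4} - |y|)\,|A_i|(y)$. It attains its maximum at some $y_i$, with $f_i(y_i) \geq f_i(0) \geq 1/4$. Setting $Q_i := |A_i|(y_i)$ and $\tau_i := \tfrac{1}{4} - |y_i|$, we have $Q_i \tau_i \geq 1/4$ (hence $Q_i \geq 1$ since $\tau_i \leq 1/4$), and the standard triangle-inequality computation yields $|A_i|(y) \leq 2Q_i$ for all $y \in B_{\tau_i/2}(y_i) \cap \Sigma_i$; moreover $B_{\tau_i/2}(y_i) \subset B_{1/4}(0)$.

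Now rescale by the factor $Q_i$ around $y_i$: set $\tilde \Sigma_i := Q_i(\Sigma_i - y_i)$. Each $\tilde \Sigma_i$ is a smooth minimal surface with $|\tilde A_i|(0) = 1$, $|\tilde A_i| \leq 2$ on $B_{Q_i \tau_i/2}(0) \supseteq B_{1/8}(0)$, and by Lemma~\ref{lem:invariant}, $\int_{B_{1/8}(0) \cap \tilde \Sigma_i} |\tilde A_i|^2 \, d\tilde \mu \to 0$. Since $\Sigma_i$ is properly embedded in $B_{3/4}(y_i)$ and $Q_i \geq 1$, each $\tilde \Sigma_i$ is properly embedded in $B_{1/8}(0)$. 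Applying Theorem~\ref{theo:minimalcompactness}, a subsequence of $\tilde \Sigma_i$ converges smoothly on $B_{1/8}(0)$ to a smooth minimal surface $\tilde \Sigma_\infty$ with $|\tilde A_\infty|(0) = 1$. But smooth convergence together with the vanishing of the rescaled energies forces $\tilde A_\infty \equiv 0$ on $B_{1/8}(0) \cap \tilde \Sigma_\infty$, contradicting $|\tilde A_\infty|(0) = 1$.

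The entire argument hinges on the conformal invariance of $\int |A|^2$ in dimension two, which is exactly what allows the energy bound to survive the blow-up rescaling; this is the reason the conclusion is sharp to $\Sigma^2 \subset \RR^3$. The only place where care is required is calibrating the point-picking so that the blow-up simultaneously enjoys a uniform $|A|$ bound on a fixed-size ball and remains properly embedded there, so that Theorem~\ref{theo:minimalcompactness} can be applied cleanly.
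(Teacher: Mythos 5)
Your proof is correct. The normalization via Lemma~\ref{lem:invariant} (using the scale invariance of $\int|A|^2$ in dimension two) is clean, the point-picking computation checks out (with $\tau_i Q_i \geq \frac{1}{4}$, $|A_i|\leq 2Q_i$ on $B_{\tau_i/2}(y_i)$, and $B_{\tau_i/2}(y_i)\subset B_{1/4}(0)$), and the blow-up $\tilde\Sigma_i$ is indeed properly contained in $B_{1/8}(0)$ with $|\tilde A_i|\leq 2$ there, $|\tilde A_i|(0)=1$, and $\int_{B_{1/8}(0)\cap\tilde\Sigma_i}|\tilde A_i|^2 \to 0$.

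Where you diverge from Choi--Schoen's own argument (the paper only cites \cite{[CS]} and gives no proof) is in the final step. Choi--Schoen close the blow-up with a quantitative PDE estimate: since $|\tilde A_i|\leq 2$, Simons' inequality gives $\Delta|\tilde A_i|^2 \geq -C|\tilde A_i|^2$, and a mean value inequality for almost-subharmonic functions yields $1=|\tilde A_i|^2(0)\leq C\int_{B_1(0)\cap\tilde\Sigma_i}|\tilde A_i|^2$, which is a contradiction once the energy drops below $1/C$. This produces an explicit $\ee_0$ and is entirely elementary, requiring no compactness machinery. You instead invoke Theorem~\ref{theo:minimalcompactness} to extract a smooth limit carrying $|\tilde A_\infty|(0)=1$, and then kill the limit's curvature via the vanishing energy. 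Both routes are valid; yours is softer and shorter to write, while the classical route is self-contained and gives effective constants. One small point worth tightening in the compactness route: Theorem~\ref{theo:minimalcompactness} as stated gives a limit only after controlling multiplicity (without an area bound the limit is a priori a lamination), but this is harmless here because it suffices to follow only the single sheet of $\tilde\Sigma_i$ through $0$, which by the uniform bound $|\tilde A_i|\leq 2$ is a graph over a fixed-size disk in $T_0\tilde\Sigma_i$, and Schauder estimates for the minimal graph equation already yield smooth subconvergence of that one sheet.
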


Motivated by Choi-Schoen's result, we show that the energy concentration property holds for mean curvature flow with bounded mean curvature by using the pseudolocality theorem.

\begin{lem}[\textbf{Energy concentration}]\label{lem:energy concen1}For any $\La, K, T>0$, there exists a constant $\ee(n, \La, K, T)>0$ with the following property.
Let $\{(\Si^n, \x(t)), -T\leq t\leq T\} $ be a closed smooth embedded mean curvature flow (\ref{eq:MCF}).  Assume that $\max_{ \Si_t\times [-T, T]}|H|(p, t)\leq \La.$
Then we have
\beq
\int_{\Si_0\cap B_{Q^{-1}}(q)}\,|A|^n\,d\mu_0\geq \ee(n, \La, K, T)
\eeq whenever $q\in \Si_0$ with $Q:=|A|(q, 0)\geq K.$

\end{lem}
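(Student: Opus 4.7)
The plan is to argue by contradiction and blow-up. Suppose the conclusion fails, so there exist a sequence of closed smooth embedded mean curvature flows $\{(\Si_i^n,\x_i(t)),\,t\in[-T,T]\}$ with $|H_i|\le\La$ and points $q_i\in\Si_{i,0}$ satisfying $Q_i:=|A_i|(q_i,0)\ge K$ but
\[
\int_{B_{Q_i^{-1}}(q_i)\cap\Si_{i,0}}|A_i|^n\,d\mu_{i,0}\longrightarrow 0.
\]
The strategy is: perform two successive parabolic rescalings; apply the two-sided pseudolocality theorem to get uniform parabolic curvature bounds in the rescaled frame; extract a smooth limit; and derive a contradiction between the normalization $|\hat A_\infty|(0,0)=1$ on the limit and the vanishing of its $L^n$ curvature norm.

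First I would rescale at rate $Q_i$ about $(q_i,0)$ by $\td\x_i(p,s):=Q_i(\x_i(p,Q_i^{-2}s)-q_i)$ for $s\in[-TQ_i^2,TQ_i^2]$. By the scaling relations preceding Lemma~\ref{lem:invariant}, the rescaled flow has $|\td H_i|\le\La Q_i^{-1}\to 0$, $|\td A_i|(0,0)=1$, and, since $|A|^n\,d\mu$ is scale invariant in dimension $n$ (the same computation as in Lemma~\ref{lem:invariant}),
\[
\int_{\td B_1(0)\cap\td\Si_{i,0}}|\td A_i|^n\,d\td\mu_{i,0}\longrightarrow 0.
\]
To feed pseudolocality, which requires an initial curvature bound $|A|\le 1/r_0$ on a fixed-size ball, I would carry out a Perelman-type point-picking on the initial slice, similar in spirit to Step~3 of the proof of Theorem~\ref{theo:pseudoA}: let $\bar y_i\in\ov{B_{1/2}(0)}\cap\td\Si_{i,0}$ maximize $(\tfrac12-|y|)\,|\td A_i|(y,0)$ and set $\bar M_i:=|\td A_i|(\bar y_i,0)$. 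A standard computation gives $\bar M_i\ge 1$ and $|\td A_i|(y,0)\le 2\bar M_i$ on $B_{(\tfrac12-|\bar y_i|)/2}(\bar y_i)\cap\td\Si_{i,0}$. Rescaling a second time by $\bar M_i$ about $\bar y_i$ produces a flow $\hat\Si_{i,\te}$ with $|\hat A_i|(0,0)=1$, $|\hat A_i|\le 2$ on $\hat B_{1/4}(0)\cap\hat\Si_{i,0}$, $|\hat H_i|\le\La Q_i^{-1}\bar M_i^{-1}\to 0$, and $\int_{\hat B_{1/4}(0)\cap\hat\Si_{i,0}}|\hat A_i|^n\to 0$.

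Now I would apply Theorem~\ref{theo:pseudoA} with $r_0=\tfrac14$ and $\La$-parameter $\La_i:=\La Q_i^{-1}\bar M_i^{-1}$. Since $\lim_{\La_i\to 0}\eta(n,\La_i)=\eta_0(n)>0$ and $\lim_{\La_i\to 0}\ee(n,\La_i)=\ee_0(n)>0$, and the total rescaled time $TQ_i^2\bar M_i^2\to\infty$, for each fixed $\tau>0$ and all large $i$ one gets a uniform bound $|\hat A_i|\le 8/\ee_0(n)$ on $C_0(\hat B_{1/64}(0)\cap\hat\Si_{i,\te})\times[-\tau,\tau]$. Theorem~\ref{theo:MCFcompactness} then furnishes a subsequential smooth limit $\hat\Si_\infty$ on $B_{1/64}(0)\times[-\tau,\tau]$, which is static and minimal because $|\hat H_i|\to 0$. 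Smooth convergence gives $|\hat A_\infty|(0,0)=1$, whereas the uniform $L^\infty$ bound on $|\hat A_i|$ together with the $L^n$ smallness and dominated convergence yields $\int_{\hat B_{1/64}(0)\cap\hat\Si_\infty}|\hat A_\infty|^n=0$. Hence $\hat A_\infty\equiv 0$ on that ball, contradicting $|\hat A_\infty|(0,0)=1$.

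The principal technical obstacle is the second rescaling: after the first rescaling one has only the pointwise normalization $|\td A_i|(0,0)=1$ without any control of $|\td A_i|$ away from the origin, so pseudolocality cannot be applied directly. The Perelman-style point-picking is designed to produce, in the twice-rescaled frame, an initial curvature bound on a ball of a definite size while preserving both the normalization $|\hat A_i|(0,0)=1$ and the $L^n$ smallness. Once this is in place, the smallness of the mean curvature under the double rescaling makes the pseudolocality constants universal, and the standard compactness/limit argument delivers the contradiction.
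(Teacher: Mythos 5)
Your blow-up-by-contradiction argument uses the same core machinery as the paper's direct proof: a ``curvature times distance to boundary'' point-picking to produce a curvature-normalized ball of definite size, pseudolocality (Theorem~\ref{theo:pseudoA}) to upgrade the pointwise normalization to a parabolic curvature bound, and a resulting estimate on the $L^n$-energy. The paper obtains the lower bound directly by integrating on a small ball where $|A|\ge \tfrac12$; you instead derive a contradiction from a smooth limit. This is a packaging difference rather than a structural one, and both rest on the identical technical inputs.

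There is, however, one unjustified step. You write $|\hat H_i|\le \La Q_i^{-1}\bar M_i^{-1}\to 0$, set $\La_i:=\La Q_i^{-1}\bar M_i^{-1}$, and then appeal to the limits in (\ref{eq:B006}) with $\La_i\to 0$ to obtain uniform pseudolocality constants, as well as claiming $TQ_i^2\bar M_i^2\to\infty$ and minimality of the limit. None of this is forced: the hypothesis only gives $Q_i\ge K$ and the point-picking only gives $\bar M_i\ge 1$, so $Q_i\bar M_i$ may well remain bounded (e.g.\ $Q_i\equiv K$, $\bar M_i\equiv 1$), in which case $\La_i\not\to 0$, $|\hat H_i|\not\to 0$, the rescaled time interval stays bounded, and $\hat\Si_\infty$ need be neither static nor minimal. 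The repair is simple and keeps your architecture intact: since always $\La_i\le\La/K$, apply Theorem~\ref{theo:pseudoA} with the \emph{fixed} mean-curvature parameter $\La/K$, yielding constants $\eta(n,\La/K)$, $\ee(n,\La/K)$ independent of $i$; note that the time-interval in (\ref{eq:B004}) is automatically intersected with the existence interval, so no growth of $TQ_i^2\bar M_i^2$ is required; and observe that your final contradiction --- $|\hat A_\infty|(0,0)=1$ versus $\int_{\hat B_{1/64}(0)\cap\hat\Si_\infty}|\hat A_\infty|^n=0$ --- never uses staticity or minimality of the limit. With these adjustments the proof goes through.
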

\begin{proof}Let  $x_0 \in \Sigma_0$ such that  $Q:=|A|(x_0, 0)\geq K$. We define the function $f(x)=|A|(x, 0) d(x, \partial \Omega)$ on $\Om:=B_{Q^{-1}}(x_0)\cap \Si_0$. Here $d$ denotes the Euclidean distance in $\RR^{n+1}$. Note that $f=0$ on the boundary $\p\Om$, if $\p\Om\neq \emptyset$,  and $f=1$ at the center point $x_0$.

\emph{Case 1.}   $\max_{\Omega} f<10$. We rescale the flow by $\td \x(p, t)=Q(\x(p, Q^{-2}t)-x_0)$ and let $\td \Om:=B_1(0)\cap \td \Si_0$, where
$\td \Si_0:=\td \x(0)(\Si)$. Note that the mean curvature of $\td \Si_t(-K^2T\leq t\leq K^2T)$ satisfies
\beq
\max_{\td \Si_t\times [-K^2T, K^2T]}|\td H|(p, t)\leq \frac {\La}{Q}\leq \frac {\La}K, \label{eq:AA001}
\eeq and $\max_{\td \Om}|\td A|d(x, \p \td \Om)=\max_{\Omega} f<10$. Thus, we have $|\td A|(x, 0) < \frac{10}{d(x, \partial \td \Omega)}$ for any $x\in B_1(0)\cap \td\Sigma_0$.  In particular, inside $B_{\frac 12}(0) \cap \td \Sigma_0$ we have  $|\td A| <20$.
Theorem \ref{theo:pseudoA} implies that there exists $\dd_0=\dd_0(n, \La, K, T)\in (0, 1)$ such that $|\td A|(p, t)\leq \frac 1{\dd_0}$ for any $t\in [-\dd^2_0, \dd_0^2]$ and $p\in \td \Si_t\cap B_{\dd_0}(0)$. Therefore, there exists $\dd_1=\dd_1(n, \La, K, T)\in (0, \dd_0)$ such that we have all higher order curvature estimates in $B_{\dd_1}(0)\cap \td \Si_t$ for  any $t\in [-\dd_1^2, \dd_1^2]$. Note that $|\td A|(0, 0)=1$, the higher order curvature estimates implies that $|\td A|(q, 0)\geq \frac 12$ on $B_{\dd_2}(0)\cap \td \Si_0$ for some $\dd_2(n, \La, K, T)\in (0, \dd_1).$ Therefore, we have
\beqs \int_{B_{Q^{-1}}(x_0)\cap \Si_0}\,|A|^n\,d\mu_0&=&
\int_{B_1(0)\cap \td \Si_0}\,|\td A|^n\,d\td \mu_0\geq \int_{B_{\dd_2}(0)\cap \td \Si_0}\,|\td A|^n\,d\td \mu_0\\&\geq& \frac 1{2^n}\vol_{\td \Si_0}(B_{\dd_2}(0)
\cap \td \Si_0)\\&\geq& \frac 1{2^n}\cdot \oo_n e^{-\frac {\La \dd_2}K}\dd_2^n,
\eeqs where we used Lemma \ref{lem:vol3} in the last inequality.

  \emph{Case 2.}  $\max_{\Omega} f \geq 10$.     Let $y_0$ be the point where $f$ achieve the maximum and  $Q':=|A|(y_0, 0)$. Note that
$$Q'\geq \frac {10}{d(y_0, \p \Om)}\geq 10Q\geq 10K.$$
We rescale the flow by $\td\x(p, t)=Q'(\x(p, Q'^{-2}t)-x_0)$ and we define
$$\td y_0:=Q'(y_0-x_0),\quad \td \Om:=B_{Q'Q^{-1}}(0)\cap \td \Si_0,$$
where $\td \Si_0:=\td \x(0)(\Si)$.  Then the function $\td f(x, 0):=|\td A|(x, 0)d(x, \p \td \Om)$ achieves the maximum at the point $\td y_0$ and the mean curvature of $\td \Si_t$ is bounded by $\frac {\La}{10K}$ for any $t\in [-Q'^2T, Q'^2T]$. Moreover, $|\td A|(\td y_0, 0)=1$ and $d(\td y_0, \p \td \Om)\geq 10.$ For any $x\in B_1(\td y_0)\cap \td \Si_0$, we have
$$|\td A|(x, 0)\leq \frac {d(\td y_0, \p \td \Om)}{d(x, \p \td \Om)}|\td A|(\td y_0, 0)\leq \frac {d(\td y_0, \p \td \Om)}{d(\td y_0, \p \td \Om)-1}\leq 2.$$
Then using the backward pseudolocality as in case $1$, we obtain that
$$\int_{B_1(\td y_0)\cap \td \Si_0}\,|\td A|^n\,d\td \mu_0\geq \ee(n, \La, K, T)$$
for some $\ee(n, \La, K, T)>0.$ Using the scaling invariance, we have
   $\int_{B_{Q'^{-1}}(y_0)\cap \Si_0} \,|A|^n \,d\mu_0 >\epsilon(n, \La, K, T)$.  By the definition of $y_0$, we have $Q'^{-1}\leq \frac 1{10}d(y_0, \p \Om)$. Therefore, $B_{Q'^{-1}}(y_0)\subset B_{Q^{-1}}(x_0)$ and we have the inequality
$$\int_{B_{Q^{-1}}(x_0)\cap \Si_0} \,|A|^n \,d\mu_0 >\epsilon(n, \La, K, T).$$ The lemma is proved.

\end{proof}

A direct corollary of Lemma \ref{lem:energy concen1} is the following result.

\begin{cor}[\textbf{$\ee$-regularity}]\label{cor:energy}
There exists $\ee_0(n)>0$ satisfying the following property.
Suppose $\{(\Si^n, \x(t)), -1\leq t\leq 1\} $ is a closed smooth embedded mean curvature flow (\ref{eq:MCF}).
Suppose that the mean curvature satisfies $\max_{ \Si_t\times [-1, 1]}|H|(p, t)\leq 1.$  For any $q\in \Si_0$, if
\begin{align}
  \int_{\Si_0\cap B_r(q)}\,|A|^n\,d\mu_0\leq \ee_0(n)   \label{eqn:PE10_5}
\end{align}
for some $r>0$, then we have
\beq \max_{B_{\frac r2}(q)\cap \Si_0}|A|\leq \max\{1, \frac 2r\}. \label{eq:AA004}\eeq
\end{cor}
\begin{proof}For any $p\in B_{\frac r2}(q)\cap \Si_0$, if $Q:=|A|(p, 0)$ satisfies $Q^{-1}<\frac r2$ and $Q> 1$, then by Lemma \ref{lem:energy concen1} we have
$$\int_{B_{r}(q)\cap \Si_0}\,|A|^n\,d\mu_0\geq \int_{B_{Q^{-1}}(p)\cap \Si_0}\,|A|^n\,d\mu_0\geq \ee_0(n),$$
where $\ee_0$ is the constant determined by choosing $K=\La=1$ in Lemma \ref{lem:energy concen1}. Therefore, we have $Q\leq 1$ or $Q^{-1}\geq \frac r2$, which implies (\ref{eq:AA004}). The corollary is proved.

\end{proof}

\subsection{Weak compactness}
In this subsection, we focus on the case $n=2.$
As in  Ricci flow \cite{[CW1]}, we study the refined sequence of mean curvature flow, which can be viewed as a sequence blown up from a rescaled mean curvature flow with bounded mean curvature and bounded energy.

\begin{defi}[\textbf{Refined sequences}]\label{defi:refined}
Let  $\{(\Si_i^2, \x_i(t)), -1\leq t\leq 1 \}$ be a one-parameter family of closed smooth embedded surfaces satisfying the mean curvature flow equation (\ref{eq:MCF}). It is called a refined sequence if the following properties are satisfied for every $i:$
\begin{enumerate}
  \item[(1)] There exists a constant $D>0$ such that $d(\Si_{i, t}, 0)\leq D$, where $d(\Si, 0)$ denotes the Euclidean distance from the point $0\in \RR^3$ to the surface $\Si\subset \RR^3.$

  \item[(2)] The mean curvature  satisfies the inequality
  \beq \lim_{i\ri +\infty}\max_{\Si_{i, t}\times [-1,  1]}|H_i|(p, t)=0. \label{eq:A203}\eeq
  \item[(3)] There is a uniform constant $\La$ such that
   \begin{align}
   \int_{\Si_{i, t}}\,|A_i|^2 \,d\mu_{i, t}\leq \La, \quad \forall \,t\in [-1, 1].    \label{eqn:PE10_2}
   \end{align}
  \item[(4)] There is uniform $N>0$ such that for all $r>0$ and $p\in \RR^3$ we have
  \begin{align}
   r^{-2}\Area_{g_i(t)}(B_r(p)\cap \Si_{i, t})\leq N, \quad \forall \,t\in [-1, 1].  \label{eqn:PE10_3}
  \end{align}
  \item[(5)] There exist  uniform constants $\bar r, \ka>0$ such that for any $r\in (0, \bar r]$ and any $p\in \Si_{i, t}$ we have
  \begin{align}
   r^{-2}\Area_{g_i(t)}(B_r(p)\cap \Si_{i, t})\geq \kappa, \quad \forall \,t\in [-1, 1].  \label{eqn:PE10_4}
  \end{align}

\end{enumerate}

\end{defi}

\begin{prop}[\textbf{Weak compactness of refined sequences}]\label{prop:EV1}
If $\{(\Si_i^2, \x_i(t)), -1\leq t\leq 1 \}$ is a refined sequence in the sense of Definition~\ref{defi:refined}, then there exists a finite set of points $\cS_0\subset \RR^3$ and a smooth embedded minimal surface $\Si_{\infty}$ such that  a subsequence of  $\{(\Si_i^2, \x_i(t)), -1<t< 1 \}$  converges in smooth topology, possibly with multiplicity at most $N_0$,  to  $\{\Si_{\infty}\}$
away from a finite set $\cS_0$ of at most $M_0$ points.  The number $N_0$ and $M_0$ can be chosen as
\begin{align}
    N_0=\left[\frac{N}{\kappa} \right]+1, \quad M_0=\left[\frac{\Lambda}{\epsilon_0} \right]+1,   \label{eqn:PE10_6}
\end{align}
where $[\cdot]$ means the integer part of a nonnegative number, $N, \kappa, \Lambda$ and $\epsilon_0$ are the numbers in (\ref{eqn:PE10_3}),  (\ref{eqn:PE10_4}), (\ref{eqn:PE10_2}) and (\ref{eqn:PE10_5}) respectively.
Furthermore, the subsequence also converges to $\Si_{\infty}$ in (extrinsic) Hausdorff distance.
\end{prop}

\begin{proof} We follow the argument of compactness of minimal surfaces (c.f. White \cite{[White3]}\cite{[White1]}, or Colding-Minicozzi \cite{[CMbook2]}).  Fix large  $\rho>0$ and let $\Om=B_\rho(0)\subset \RR^3$. By Property (1) in Definition \ref{defi:refined}, we have $\Si_{i, 0}\cap \Om\neq \emptyset$ for large $\rho$.
 For any $U\subset \Omega$, we define the measures $\nu_i$ by
$$\nu_i(U)=\int_{U\cap \Si_{i, 0}}\,|A_i|^2\,d\mu_{i, 0}\leq \La.$$
The general compactness of Radon measures implies that there is a subsequence, which we still denote by $\nu_i$, converges weakly to a Radon measure $\nu$ with
$\nu(\Om)\leq \La.$
We define the set
$$\cS_0=\{x\in \Omega\;|\;\nu(x)\geq \ee_0\},$$
where $\ee_0$ is the constant in Corollary \ref{cor:energy}.
 It follows that $\cS_0$ contains at most $\frac {\La}{\ee_0}$ points, which is independent of $\rho.$ Given any $y\in \Omega\backslash \cS_0.$ There exists some $s\in (0, \frac 15)$ such that $B_{10s}(y)\subset \Om$ and
$\nu(B_{10s}(y))<\ee_0.$ Since $\nu_i\ri \nu$, for $i$ sufficiently large we have
$$\int_{B_{10s}(y)\cap \Si_{i, 0}}\,|A_i|^2\,d\mu_{i, 0}<\ee_0. $$
Corollary \ref{cor:energy} implies that for $i$ sufficiently large we have the estimate
\beq \max_{B_{5s}(y)\cap \Si_{i, 0}}|A|(x, 0)\leq \max\{1, \frac 1{5s}\}\leq \frac 1{5s}.  \label{eq:A100}\eeq
Note that by Property (2) in Definition \ref{defi:refined} the mean curvature of $\Si_{i, t}$ tends to zero. By Theorem \ref{theo:pseudoB} there exists a universal constant $\ee>0$ such that for large $i$ and any small $s\in (0, \frac 1{5})$ we have
\beq
\max_{B_{\frac 1{16} r_0}(y)\cap \Si_{i, t}}|A|(x, t)\leq \frac 1{\ee r_0},\quad \forall\, t\in [-1, 1],
\eeq where $r_0=5s.$ Therefore, we have all higher order estimates of the second fundamental form  at any point away from the singular set.
By Theorem \ref{theo:MCFcompactness} and a diagonal sequence argument we can show that a subsequence of $\{(\Si_i^2, \x_i(t)), -1<t< 1 \}$ converges in smooth topology, possibly with multiplicities, to an embedded minimal surface $\Si_{\infty}$ away from the singular set $\cS_0.$ Property (4)-(5) imply that the multiplicity of the convergence is bounded by some constant $N_0$.
The choice of $N_0$ and $M_0$ in (\ref{eqn:PE10_6}) is clear from the above discussion.
Since $\Si_{\infty}$ is minimal and the convergence is smooth outside $\cS_0$, the same argument as in Proposition 7.14 of Colding-Minicozzi \cite{[CMbook2]} shows that $\Si_{\infty}\cup \cS_0$ is a smooth embedded minimal surface and the convergence is also in Hausdorff distance. The proposition is proved.

\end{proof}

To study the multiplicity, we define a function
\beq
\Te(x, r, t) :=\lim_{i\ri +\infty}\frac {\Area_{g_i(t)}(\Si_{i, t}\cap B_r(x))}{\pi r^2},\quad
\forall \;(x, t)\in \Si_{\infty}\times (-1, 1).\label{eqD:002}
\eeq
Then the multiplicity at $(x, t)\in \Si_{\infty}\times (-1, 1)$  is give by
\beq
\frak m(x, t) :=\lim_{r\ri 0}\Te(x, r, t).  \label{eqn:PE18_2}
\eeq
It is clear that $\frak m(x, t)$ is an integer.

\begin{lem}
Under the  assumption of Proposition~\ref{prop:EV1}, the function $\frak m(x, t)$ is a constant integer on $\Si_{\infty}\times (-1, 1)$. Namely, $\frak m(x, t)$ is independent of $x$ and $t$.
\label{lma:PE16_1}
\end{lem}

\begin{proof}We divide the proof into several steps.\\

{\it Step 1. For each $t\in (-1, 1)$, $\frak m(x, t)$ is  constant on $\Si_{\infty}\b \cS_0$.} Fix $t_0\in (-1, 1)$ and  $x_0\in \Si_{\infty}\b \cS_0$.
 Since $x_0$ is a regular point, there exists $r_0>0$ such that for large $i$,
\beq
|A|(x, t_0)\leq \frac 1{r_0}, \quad \, \forall\; x\in B_{r_0}(x_0)\cap \Si_{i, t_0}.\label{eqE:001}
\eeq
By Lemma \ref{lem:graph1}, we can assume $r_0$ small such that $B_{r_0}(x_0)\cap \Si_{\infty}$ can be written as a graph over the tangent plane of $\Si_{\infty}$ at $x_0$.
Let $r_1=\frac {r_0}{4}$. For any $p\in B_{r_1}(x_0)\cap \Si_{i, t_0}$, we have $B_{\frac {r_0}2}(p)\subset B_{r_0}(x_0)$. Thus, (\ref{eqE:001}) implies that
\beq
|A|(x, t_0)\leq \frac 2{r_0}, \quad \, \forall\; x\in B_{\frac {r_0}2}(p)\cap \Si_{i, t_0}.\label{eqE:002}
\eeq By Lemma \ref{lem:ratio} for any $\dd>0$ there exists $\rho_0=\rho_0(r_0, \dd)>0$ such that for any $r\in (0, \rho_0)$ and any $p\in B_{r_1}(x_0)\cap \Si_{i, t_0}$  we have
\beq
 \frac {\Area_{g_i(t_0)}(C_p(B_r(p)\cap \Si_{i, t_0}))}{\pi r^2}\leq 1+\dd. \label{eqE:003}
\eeq
On the other hand, by Lemma~\ref{lem:vol3} we can choose $\rho_0$ small such that for any $r\in (0, \rho_0)$,   on each component of $B_r(p)\cap \Si_{i, t_0}$ we obtain
\begin{align}
  \frac {\Area_{g_i(t_0)}(C_p(B_r(p)\cap \Si_{i, t_0}))}{\pi r^2}\geq 1-\dd.  \label{eqn:PE18_1}
\end{align}
Suppose that $B_{r_1}(x_0)\cap \Si_{i, t_0}$ has $m_i$ connected components, where $m_i$ is an integer bounded by a constant independent of $i$ by Proposition \ref{prop:EV1}. After taking a subsequence of $\{\Si_{i, t_0}\}$ if necessary, we can assume that $m_i$ are the same
integer denoted by $m$ with $m\geq 1.$
For any $x\in B_{\frac {r_1}2}(x_0)\cap \Si_{\infty}$, we denote by $\al_x$ the normal line passing through $x$ of $\Si_{\infty}$. Since each component of $B_{r_1}(x_0)\cap \Si_{i, t_0}$ converges to $B_{r_1}(x_0)\cap \Si_{\infty}$ smoothly and $B_{r_1}(x_0)\cap \Si_{\infty}$ is a graph over the tangent plane of $\Si_{\infty}$ at $x_0$, $\al_x$ intersects transversally each component of $\Sigma_{i, t_0}$  at exactly one point.
Suppose that
$$\al_x\cap \Big(B_{r_1}(x_0)\cap \Si_{i, t_0}\Big)=\{p_i^{(1)}, p_i^{(2)}, \cdots, p_i^{(m)}\}. $$
Then (\ref{eqE:003}) and (\ref{eqn:PE18_1}) imply that for any integer $j$ with $1\leq j\leq m$ and any $r\in (0, \rho_0)$,
\beq
1-\delta \leq \frac {\Area_{g_i(t_0)}(C_{p_i^{(j)}}(B_r(p_i^{(j)})\cap \Si_{i, t_0}))}{\pi r^2}\leq 1+\dd. \label{eqE:004}
\eeq
Since for any $1\leq j\leq m$ we have $p_i^{(j)}\ri x$ and
$C_{p_i^{(j)}}(B_r(p_i^{(j)})\cap \Si_{i, t_0})$ converges smoothly to $B_r(x)\cap \Si_{\infty}$,  (\ref{eqE:004}) implies that
 \beq
 m(1-\delta) \leq \lim_{i\ri +\infty}\frac {\Area_{g_i(t_0)}(B_r(x)\cap \Si_{i, t_0})}{\pi r^2}\leq m(1+\dd).
 \eeq
In other words, for any $x\in B_{\frac {r_1}2}(x_0)\cap \Si_{\infty}$ and any $r\in (0, \rho_0)$ we have
\beq m(1-\delta) \leq \Te(x, r, t_0)\leq m(1+\dd). \label{eqE:005}\eeq
Taking $r\ri 0$ in (\ref{eqE:005}), we have
$$\m(x, t_0)=m,\quad \forall\; x\in B_{\frac {r_1}2}(x_0)\cap \Si_{\infty}.$$
By the connectedness of $\Si_{\infty}\b \cS_0$, we know that $\m(x, t_0)$ is constant on  $\Si_{\infty}\b \cS_0$.\\

{\it Step 2. For each $t\in (-1, 1)$, $\frak m(x, t)$ is  constant on $\Si_{\infty}$.} It suffices to consider a singular point $p_0\in \cS_0$. Fix $t_0\in (-1, 1)$. Suppose that $B_{r}(p_0)\cap \Si_{\infty}$ has no other singular points except $p_0$ for any $r\in (0, r_0).$ Then all points in $(B_r(p_0)\b B_{\ee}(p_0))\cap \Si_{\infty}$ are regular and $(B_r(p_0)\b B_{\ee}(p_0))\cap \Si_{i, t_0}$ has $m$ connected components.  Thus, we have
\beqn
\Area_{g_i(t_0)}(\Si_{i, t_0}\cap B_r(p_0))&\leq&\Area_{g_i(t_0)}(\Si_{i, t_0}\cap (B_r(p_0)\b B_{\ee}(p_0)))+\Area_{g_i(t_0)}(\Si_{i, t_0}\cap B_{\ee}(p_0))\nonumber\\
&\leq &\Area_{g_i(t_0)}(\Si_{i, t_0}\cap (B_r(p_0)\b B_{\ee}(p_0)))+N \ee^2,\label{eqF:001}
\eeqn where we used (\ref{eqn:PE10_3}) in the last inequality. Since each  component of $\Si_{i, t_0}\cap (B_r(p_0)\b B_{\ee}(p_0)) $ converges to $(B_r(p_0)\b B_{\ee}(p_0))\cap \Si_{\infty}$ smoothly, we have
\beq
\lim_{i\ri +\infty}\Area_{g_i(t_0)}(\Si_{i, t_0}\cap (B_r(p_0)\b B_{\ee}(p_0)))=m\,\Area_{g_\infty}(\Si_{\infty}\cap (B_r(p_0)\b B_{\ee}(p_0))),\label{eqF:002}
\eeq where $m$ is the number of components of $\Si_{i, t_0}\cap (B_r(p_0)\b B_{\ee}(p_0))$. Note that  $m$ is also the multiplicity at each regular point in $\Si_{\infty}$ by Step 1.
Combining (\ref{eqF:001}) with (\ref{eqF:002}), we have
\beqn &&m\,\Area_{g_\infty}(\Si_{\infty}\cap (B_r(p_0)\b B_{\ee}(p_0)))\nonumber\\&\leq&
\lim_{i\ri +\infty}\Area_{g_i(t_0)}(\Si_{i, t_0}\cap B_r(p_0))\nonumber\\&\leq& m\,\Area_{g_\infty}(\Si_{\infty}\cap (B_r(p_0)\b B_{\ee}(p_0)))+N \ee^2.\label{eqF:003}
\eeqn
Taking $\ee\ri 0$ in (\ref{eqF:003}), we have
\beq
\lim_{i\ri +\infty}\Area_{g_i(t_0)}(\Si_{i, t_0}\cap B_r(p_0))=m\,\Area_{g_\infty}(\Si_{\infty}\cap B_r(p_0)). \label{eqF:004}
\eeq Thus, we have
\beqs
\m(p_0, t_0)&=&\lim_{r\ri 0}\frac {\Area_{g_i(t_0)}(\Si_{i, t_0}\cap B_r(p_0))}{\pi r^2}\\
&=&m \lim_{r\ri 0}\frac {\Area_{g_\infty}(\Si_{\infty}\cap B_r(p_0))}{\pi r^2}=m.
\eeqs
This implies that the multiplicity of each singular point is the same as that of any regular point. \\

{\it Step 3. $\frak m(x, t)$ is constant in $t$.}
Let $x_0\in \Si_{\infty}$ and $\ee_i:=\max_{\Si_{i, t}}|H|\ri 0$.
Similar to the proof of (\ref{eqG:001}), for any $t_1, t_2\in (-1, 1)$ we have
\beq
|\x_i(p, t_2)-x_0|\leq |\x_i(p, t_1)-x_0|+\ee_i|t_1-t_2|.
\eeq
Thus, we have
\beq \x(t_1)^{-1}(B_{r}(x_0)\cap \Si_{i, t_1})\subset \x(t_2)^{-1}(B_{r_1}(x_0)\cap \Si_{i, t_2}),  \label{eqG:003}\eeq
where
\beq
r_1=r+\ee_i |t_2-t_1|. \label{eqG:002}
\eeq
Recall that the evolution of area element along mean curvature flow is dominated by $|H|^2$ as in (\ref{eqn:PE24_1}).
Therefore (\ref{eqG:003}) implies that
\begin{align}
\Area_{g_i(t_1)}(B_{r}(x_0)\cap \Si_{i, t_1} )&\leq  e^{\epsilon_i^2|t_2-t_1|}  \Area_{g_i(t_2)}(B_{r_1}(x_0)\cap \Si_{i, t_2}).  \label{eqG:004}
\end{align}
Combining (\ref{eqG:004}) with (\ref{eqG:002}), we have
\beqn
\Te(x_0, r,  t_1)&=&\lim_{i\ri +\infty}\frac {\Area_{g_i(t_1)}(B_{r}(x_0)\cap \Si_{i, t_1} )}{\pi r^2}\nonumber\\
 &\leq& \lim_{i\ri +\infty} e^{\epsilon_i^2|t_2-t_1|}  \frac {\Area_{g_i(t_2)}(B_{r_1}(x_0)\cap \Si_{i, t_2})}{\pi r^2} \nonumber\\
&=& \lim_{i\ri +\infty}\frac {\Area_{g_i(t_2)}(B_{r_1}(x_0)\cap \Si_{i, t_2})}{\pi r^2}\nonumber\\
&=&\Te(x_0, r, t_2).  \label{eqG:005}
\eeqn
Letting $r\ri 0$ in (\ref{eqG:005}), we have
$$\frak m(x_0, t_1)\leq \frak m(x_0, t_2).$$
Since $t_1$ and $ t_2$ are arbitrary in $(-1, 1)$,  we have that $\frak m(x, t)$ is constant in $t$.

\end{proof}

\section{Multiplicity-one convergence of the rescaled mean curvature flow}
In this section, we show that a rescaled mean curvature flow with mean curvature exponential decay will converge smoothly to a plane with multiplicity one.

\begin{theo}
\label{theo:removable}
Let $\{(\Si^2, \x(t)), 0\leq t<+\infty\}$ be a rescaled mean curvature flow
\beq \Big(\pd {\x}t\Big)^{\perp}=-\Big(H-\frac 12\langle \x, \n\rangle\Big)\n\label{eq:BB001}\eeq
 satisfying
\beq d(\Si_t, 0)\leq D,\quad \hbox{and}\quad \max_{\Si_t}|H(p, t)|\leq \La_0 e^{-\frac t2} \label{eq:BB002}\eeq
for two constants $D, \La_0>0$.  Then there exists a sequence of times $t_j\ri +\infty$ such that $\Si_{t_j}$  converge in smooth topology to a  plane passing through the origin with multiplicity one.
\end{theo}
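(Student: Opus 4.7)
My plan is to prove this theorem in two stages, mirroring the outline in the introduction: first, a weak-compactness argument that extracts a subsequential limit at times $t_j\to\infty$ which must be a plane through the origin with some multiplicity $m\le N_0$; second, an $L$-stability argument that forces $m=1$ and thereby upgrades the convergence to the stated smooth, multiplicity-one convergence.

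For the first stage I would fix a sequence $t_j\to\infty$ and consider the time-shifted flows $\Si_{j,s}:=\Si_{t_j+s}$, $s\in[-1,1]$, and then apply Proposition~\ref{prop:EV1}. I have to verify the five properties of Definition~\ref{defi:refined}. Properties (1) and (2) are immediate from (\ref{eq:BB002}); Property (4) follows from Lemma~\ref{lem:vol} once one unwinds the time reparameterization between the rescaled and unrescaled flows; Property (5) comes from Lemma~\ref{lem:vol3} together with the uniform mean-curvature bound. The only nontrivial verification is the uniform energy bound, Property (3): for a surface in $\RR^3$ the Gauss--Bonnet identity gives $|A|^2=H^2-2K$, hence
$$\int_{\Si_{j,s}}|A|^2\,d\mu\;=\;\int_{\Si_{j,s}}H^2\,d\mu\;-\;4\pi\chi(\Si).$$
Since $|H|\to 0$ and the area is uniformly bounded (by monotonicity of Huisken's $F$-functional along (\ref{eq:BB001}) combined with the pointwise lower bound $e^{-|\x|^2/4}\ge e^{-D^2/4}$ coming from (\ref{eq:BB002})), the right-hand side is uniformly bounded.

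Proposition~\ref{prop:EV1} then yields a finite singular set $\cS_0$ and a smooth embedded minimal surface $\Si_\infty$ such that, after passing to a subsequence, $\Si_{j,s}\to\Si_\infty$ smoothly (with multiplicity $m\le N_0$) away from $\cS_0$. Theorem~\ref{theo:pseudoB} ensures that the $|A|$-bound propagates to the full time-interval $[-1,1]$, so the limit is $s$-independent. Plugging $H\equiv 0$ together with the $s$-independence into the rescaled flow equation $\partial_s\x=-(H-\tfrac12\langle\x,\n\rangle)\n$ yields $\tfrac12\langle\x,\n\rangle\equiv 0$, which together with $H\equiv 0$ is the self-shrinker equation; by Lemma~\ref{lem:selfshrinker}, $\Si_\infty$ is a plane through the origin.

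The main obstacle is the second stage: ruling out $m\ge 2$. I would argue by contradiction. If $m\ge 2$, then over any compact $\Om\subset\Si_\infty\setminus\cS_0$ two distinct nearby sheets of $\Si_{j,s}$ are graphs $u_j^{(1)},u_j^{(2)}$ over $\Si_\infty$, and their difference $w_j=u_j^{(2)}-u_j^{(1)}$ satisfies a linear parabolic equation whose limit operator is $\partial_s-L$, where $L$ is the Colding--Minicozzi drift operator. After normalizing $w_j$ by its value at a fixed base point one obtains positive solutions $v_j$; the point is to obtain a uniform $L^\infty$ bound on $v_j$ on compacts. This is where the new ``almost decreasing'' quantity announced in the introduction is needed, used in combination with the parabolic Harnack inequality to trade bounds between nearby times. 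Passing to the limit yields $v_\infty>0$ on $\Si_\infty\setminus\cS_0$ solving $Lv_\infty=0$, and from this one deduces the $L$-stability of $\Si_\infty$ on $\Si_\infty\setminus\cS_0$; the finite singular set $\cS_0$ is then removed by a Gulliver--Lawson style logarithmic cutoff argument. The desired contradiction is then that a plane is \emph{not} $L$-stable: on it $|A|^2=0$, so the quadratic form $-\int uLu\,e^{-|\x|^2/4}=\int(|\Na u|^2-\tfrac12 u^2)\,e^{-|\x|^2/4}$ is made strictly negative by a suitably chosen radial cutoff of the constant function. This forces $m=1$ and upgrades the convergence from Stage~1 to the smooth, multiplicity-one convergence claimed by the theorem. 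The two hardest substeps are expected to be the uniform estimate on the $v_j$ and the $L$-stability extension across $\cS_0$, both of which require genuinely new analytic input beyond the weak-compactness package of Section~3.
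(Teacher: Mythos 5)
Your outline matches the paper's two-stage strategy, but there is a serious gap in Stage~2 and a couple of smaller errors in Stage~1 worth flagging. In Stage~1, the bound $e^{-|\x|^2/4}\ge e^{-D^2/4}$ is false: (\ref{eq:BB002}) only guarantees that \emph{some} point of $\Si_t$ lies within distance $D$ of the origin, and in fact $\mathrm{Area}(\Si_t)\to\infty$ as the rescaled flow spreads out to approximate a plane, so monotonicity of the Huisken $F$-functional does not control the area. The paper obtains the uniform energy bound by pulling back to the un-rescaled mean curvature flow, where $\mathrm{Area}$ is non-increasing and $|\hat H|\le\La_0$, and then invoking the scale invariance of $\int|A|^2$ (Lemma~\ref{lem:invariant}). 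Also, the pseudolocality theorem does not by itself make the limit flow $s$-independent; identifying the limit as a static plane through the origin requires Huisken's monotonicity formula (to get the self-shrinker equation in the limit) followed by Lemma~\ref{lem:selfshrinker}.

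The serious gap is the assertion that the normalized sheet differences converge to a $t$-independent positive solution of the elliptic equation $Lv_\infty=0$. Since (\ref{eq:BB001}) is a parabolic flow rather than a fixed self-shrinker equation, the sheet-difference $w_j$ only satisfies a \emph{parabolic} equation, and the limit $w(x,t)$ solves $\partial_t w=Lw$ with genuine time dependence; nothing in your argument removes it. This is precisely the difficulty the paper's thick-thin decomposition is built to address. Setting $v=\log w$ gives only the pointwise-in-time inequality
\[
-\int_{\Si_\infty}(\phi L\phi)e^{-|\x|^2/4}\;\ge\;-\frac{d}{dt}\int_{\Si_\infty}v\,\phi^2\, e^{-|\x|^2/4},
\]
and one must integrate this over $[t_\ee,T]$ and control the two boundary terms \emph{uniformly in $T$}. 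That uniformity is exactly what the careful choice of $\{t_i\}$ via the $|\mathbf{TN}|$-doubling property (Lemma~\ref{lma:GC05_3}) and the Harnack estimates (Lemma~\ref{lem:estimates}) provide: the upper bound on $w$ holds for all times independently of $T$, and the lower bound at the \emph{fixed} initial time $t=t_\ee$ is also $T$-independent. Only after dividing by $T-t_\ee$ and letting $T\to\infty$ does one obtain the $L$-stability inequality (\ref{eqn:GB24_6}). Without such a time-integration device, the Fischer-Colbrie--Schoen log-substitution does not close, and the contradiction against $L$-stability of the plane is out of reach by the route you describe.
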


We sketch the proof of Theorem \ref{theo:removable}.  First, we show the weak compactness for any sequence of the rescaled mean curvature flow in Lemma \ref{lem:A001}. Suppose that the multiplicity is at least two.
By using the decomposition of spaces(c.f. Definition~\ref{def:GD18_1}) we can select a special sequence $\{t_i\}$ in Lemma \ref{lma:GC05_3}. This special sequence is needed to control the upper bound of the function $w_i$
by using the parabolic Harnack inequality (c.f. Lemma \ref{lem:estimates}).  Then we can take the limit for the function $w_i$ and obtain a positive function $w$ with uniform  bounds(c.f. Proposition~\ref{prn:PE14_1}).
The function $w$ satisfies the linearized rescaled mean curvature flow equation.  The bounds of $w$ imply  the $L$-stability of the limit plane (c.f. Lemma~\ref{lma:GB24_2} and Lemma \ref{lem:Z}).
However, the plane is not $L$-stable and we obtain a contradiction.

\subsection{Convergence away from singularities}
\label{subsec:conv}

\begin{lem}\label{lem:A001}
Under the assumption of Theorem \ref{theo:removable}, for any sequence $t_i\ri +\infty$, there is a plane $\Si_{\infty}$ passing through the origin and a finite set $\cS_0\subset \Si_{\infty}$ of  points satisfying the following properties. For any $T>0$,
 there is a subsequence, still denoted by $\{t_i\}$,  such that $\{\Si_{t_i+t}, -T<t<T\}$ converges in smooth topology,
possibly with multiplicities at most $N_0$, to the plane $\Si_{\infty}$  away from the space-time singular set
$\cS=\{(x, t)\,|\,t\in (-T, T), x\in e^{\frac t2}\cS_0 \}$.

\end{lem}

\begin{proof}The proof divides into the following steps.

\textit{Step 1. The energy of $\Si_t$ is uniformly bounded along the flow (\ref{eq:BB001}).} In fact, we rescale the flow $\Si_t$ by
\beq
s=1-e^{-t},\quad \hat \Si_s=\sqrt{1-s}\,\Si_{-\log(1-s)} \label{eq:MCF2}
\eeq such that $\{\hat \Si_s, 0\leq s<1 \}$   is a mean curvature flow satisfying (\ref{eq:MCF}).  Moreover, the mean curvature $\hat H$ of $\hat \Si_s$ satisfies
\beq
\max_{\hat \Si_s}|\hat H|=\frac 1{\sqrt{1-s}}\max_{\Si_t}|H|=e^{\frac t2}\max_{\Si_t}|H|\leq \La_0,
\eeq where we used the assumption (\ref{eq:BB002}).
Note that the scalar curvature $\hat S$ of $\hat \Si_s$ satisfies  $\hat S=\hat H^2-|\hat A|^2$, where   $\hat A$ denotes   the second fundamental form of $\hat \Si_s$.
By the Gauss-Bonnet theorem we have
\begin{align}
\int_{\hat \Si_s}\,|\hat A|^2\,d\hat \mu_s=\int_{\hat \Si_s}\,|\hat H|^2\,d\hat \mu_s-4\pi\chi(\Si_0)\leq \La_0^2 \Area(\Si_0)-4\pi\chi(\Si_0) =: \Lambda.  \label{eqn:PE10_1}
\end{align}
where we used the fact that $\Area(\hat \Si_s)$ is non-increasing  in $s$. Here $\chi(\Si)$ denotes the  Euler characteristic of $\Si$.  Therefore, by Lemma \ref{lem:invariant} the energy of $\Si_t$ satisfies the inequality
\beq \int_{\Si_t}\,|A|^2\,d\mu_t\leq \Lambda. \label{eq:AA005} \eeq

\textit{Step 2. For any sequence $t_i \ri + \infty$, we can obtain a refined sequence converging to a limit minimal surface $\tilde{\Sigma}_{\infty}$.}
 For any sequence $t_i\ri +\infty,$ we can rescale the flow $\Si_t$ by
\beq
s=1-e^{-(t-t_i)},\quad \td \Si_{i, s}=\sqrt{1-s}\;\Si_{ t_i-\log(1-s)} \label{eq:BB003}
\eeq  such that for each $i$ the flow   $\{\td \Si_{i, s}, 1-e^{t_i}\leq s< 1\}$ is a mean curvature flow satisfying (\ref{eq:MCF}) with the following properties:
\begin{enumerate}
  \item[$(a)$.]  For any small $\la>0$, the mean curvature of $\td \Si_{i, s}$ satisfies
  $$\lim_{i\ri +\infty}\max_{\td \Si_{i, s}\times [1-e^{t_i}, 1-\la]}|\td H_i|(p, s)= 0;$$
\item[$(b)$.] The energy of $\td \Si_{i, s}$ satisfies (\ref{eq:AA005});
  \item[$(c)$.] Uniform upper bound on the area ratio;
  \item[$(d)$.]  Uniform lower bound on the area ratio;
  \item[$(e)$.] There exists a constant $D'>0$ such that $d(\td \Si_{i, s}, 0)\leq D'$ for any $i$.
\end{enumerate}
In fact, Property $(a)$ and $(e)$ follow from the assumption (\ref{eq:BB002}), and Property $(b)$ follows from (\ref{eq:AA005}).
Property $(c)$  follows  from Lemma \ref{lem:vol} and Lemma \ref{lem:invariant},  and Property $(d)$ follows directly from Lemma \ref{lem:vol3}. Therefore, by Definition \ref{defi:refined} for any $T_0>2$, small $\la\in (0, 1)$ and any $s_0\in [-T_0+1, -\la]$ the sequence $\{\td \Si_{i, s_0+\tau}, -1<\tau<1\}$ is a refined sequence. By Proposition \ref{prop:EV1} a subsequence of  $\{\td \Si_{i, s_0+\tau}, -1<\tau<1\}$  converges in smooth topology, possibly with multiplicity at most $N_0$, to a smooth embedded minimal surface $\td \Si_{\infty}$ away from a finite set of points $\td \cS=\{q_1, \cdots, q_l\}$ such that
\begin{align}
 l \leq \left[ \frac{\Lambda}{\epsilon_0}\right] +1=M_0,   \label{eqn:PE10_7}
\end{align}
where we used  (\ref{eqn:PE10_6}) and (\ref{eq:AA005}).

\begin{claim}\label{claim3}
Under the above assumptions, there exists a subsequence of $\{\td \Si_{i, s}, -T_0<s<1-\la\}$ such that it converges  in smooth topology, possibly with multiplicity at most $N_0$, to the limit minimal surface $\td \Si_{\infty}$ away from the singular set $\td \cS=\{q_1, \cdots, q_l \}$. Furthermore, $\td \Si_{\infty}$ and $\td \cS$ are independent of $\;T_0$ and $\la. $
\end{claim}

\begin{proof}

Let $T_0=N+\bb$ where $N\in \NN, N\geq 2$ and $\bb\in [0, 1)$. For the interval $I_0:=(-2, 0)$, we have a subsequence of $\{t_i\}$, which we denote by $\{i_k^{(1)}\}$, such that  $\{\td \Si_{i_k^{(1)}, s}, s\in I_0\}$  converges  in smooth topology, possibly with multiplicity at most $N_0$, to a limit minimal surface $\td \Si_{\infty}$ away from a singular set $\td \cS=\{q_1, \cdots, q_l \}$. Consider the interval $I_0':=(-1-\la, 1-\la)$ with $\la\in (0, 1)$. Since $I_0'\cap I_0\neq\emptyset$, we can take a further subsequence of $\{i_k^{(1)}\}$, denoted by $\{i_k^{(2)}\}$,  such that  $\{\td \Si_{i_k^{(2)}, s}, s\in I_0'\}$ converges to the same limit surface $\td \Si_{\infty}$ away from the same singular set $\td \cS$. Similarly, we consider $I_2=(-2-\bb, -\bb)$. Since $I_0\cap I_2\neq \emptyset$, we can take a subsequence of $\{i_k^{(2)}\}$, denoted by $\{i_k^{(3)}\}$,  such that  $\{\td \Si_{i_k^{(3)}, s}, s\in I_2\}$ converges to the same limit surface $\td \Si_{\infty}$ away from the same singular set $\td \cS$. Repeating this process for the interval $I_j=(-j-\bb, -j-\bb+2)$ for $j= 3, \cdots, N$  and we get a subsequence $\{i_k^{(N+1)}\}$ of $\{t_i\}$, such that  $\{\td \Si_{i_k^{(N+1)}, s}, -T_0<s<1-\la\}$  converges to   $\td \Si_{\infty}$ away from   $\td \cS$. By the construction of $\td \Si_{\infty}$ and $\td \cS$, we know that $\td \Si_{\infty}$ and $\td \cS$ are independent of the choice of $T_0$ and $\la$. The Claim is proved.

\end{proof}

\textit{Step 3. Each limit $\td\Si_{\infty}$ must be a plane through the origin.}
In fact, by Huisken's monotonicity formula, along the rescaled mean curvature flow (\ref{eq:BB001}) we have
\beq
\frac d{dt}\int_{\Si_t}\,e^{-\frac {|\x|^2}4}\,d\mu_t=-\int_{\Si_t}\,e^{-\frac {|\x|^2}4}\Big|H-\frac 12\langle\x, \n\rangle\Big|^2\,d\mu_t.
\eeq This implies that
$$\int_0^{\infty}\,\int_{\Si_t}\,e^{-\frac {|\x|^2}4}\Big|H-\frac 12\langle\x, \n\rangle\Big|^2\,d\mu_t<+\infty.$$
For any $t_i\ri +\infty$, we rescale the flow (\ref{eq:BB001}) by (\ref{eq:BB003}) such that for each $i$ the flow   $\{\td \Si_{i, s}, 1-e^{t_i}\leq s< 1\}$ is a mean curvature flow satisfying (\ref{eq:MCF}) and we denote the solution by $\td \x_{i, s}$. Therefore, for fixed $T_0>0$, small $\la>0$ and large $i$ we have
\beqs &&
\lim_{i\ri +\infty}\,\int_{-T_0}^{1-\la}\,ds\int_{\td \Si_{i, s}}\,
e^{-\frac{|\td \x_{i, s}|^2}{4(1-s)}}\,\Big|\td H_i-\frac{1}{2(1-s)}\langle \td \x_{i, s}, \n\rangle\Big|^2\,d\td \mu_{i, s}\\&=&
\lim_{t_i\ri +\infty}\int_{t_i-\log (1+T_0)}^{t_i-\log \la}\,\int_{\Si_t}\,e^{-\frac {|\x|^2}4}\Big|H-\frac 12\langle\x, \n\rangle\Big|^2\,d\mu_t=0.
\eeqs
Since $\{\td \Si_{i, s}, -T_0< s< 1-\la\}$ converges locally smoothly, possibly with multiplicity at most $N_0$,  to $\td \Si_{\infty}$ away from $\td \cS$, we have
$$\int_{-T_0}^{1-\la}\,ds\int_{\td \Si_{\infty}}\,
e^{-\frac {|\td \x_{\infty}|}{4(1-s)}}\Big|\td H-\frac 1{2(1-s)}\langle\td \x_{\infty}, \n\rangle\Big|^2\,d\td \mu_{\infty, s}=0.$$
Therefore, $\{(\td \Si_{\infty}, \td \x_{\infty}(p, s)), -T_0<s<1-\la\}$ satisfies the equation
$$\td H-\frac 1{2(1-s)}\langle\td \x_{\infty}, \n\rangle=0$$
away from the singular set $\td \cS.$ Since $\td \Si_{\infty}$ is  a smooth embedded minimal surface, we have $\td H=\langle\td \x_{\infty}, \n\rangle=0$. By Lemma \ref{lem:selfshrinker}  we know $\td \Si_{\infty}$ must be a plane passing through the origin.
 Let $\x_i(p, t)=\x(p, t_i+t)$ and $\Si_{i, t}=\Si_{t_i+t}$.
Since $\{\td \Si_{i, s}, -T_0<s<1-\la\}$ converges locally smoothly to $\td \Si_{\infty}$ away from $\td \cS$, the flow $\{\Si_{i, t}, -\log(1+T_0)<t<-\log \la \}$ also converges locally smoothly to the plane $\td \Si_{\infty}$ away from the singular set
$\cS=\{(x, t)\;|\;t\in (-\log(1+T_0), -\log \la),\;x\in e^{\frac t2}\td \cS\}.$ Moreover, by Claim \ref{claim3} $\td \Si_{\infty}$ and $\td \cS$ are independent of $T_0$ and $\la$.
The lemma is proved.
\end{proof}

 \subsection{Decomposition of spaces and a ``monotone  decreasing" quantity}
 \label{subsec:decompose}

 Note that if one limit plane  in Lemma~\ref{lem:A001} has multiplicity one, then the proof of Theorem~\ref{theo:removable} is done.
 Therefore, we can assume that every limit plane has multiplicity more than one.
 Consequently, we can decompose the space as follows.

 \begin{figure}
 \begin{center}
 \psfrag{A}[c][c]{Thick part $\mathbf{TK}$}
 \psfrag{B}[c][c]{\color{blue} Thin part $\mathbf{TN}$ }
 \psfrag{C}[c][c]{\color{red} High curvature neighborhood $\mathbf{H}$}
 \includegraphics[width=0.5 \columnwidth]{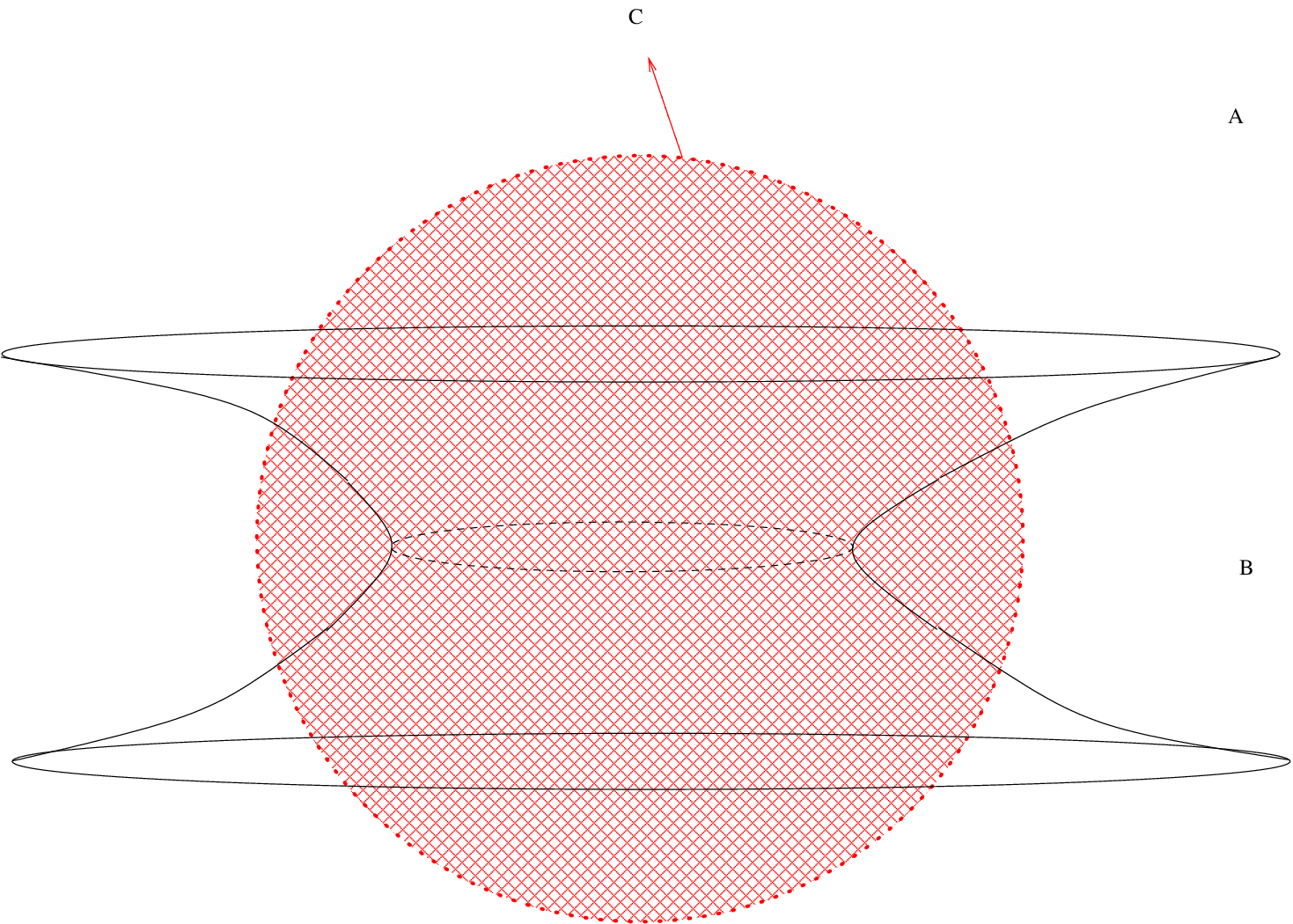}
 \caption{Decomposition of the space}
 \label{fig:decomposition}
 \end{center}
 \end{figure}

 \begin{defi} \label{def:GD18_1}
 \begin{enumerate}
                                   \item[(1).] We define the set $\mathbf{S}=\mathbf{S}(\epsilon, \Sigma_t)=\{y \in \Sigma_t\; |\; |y|<\ee^{-1},\; |A|(y, t)>\epsilon^{-1}  \}.$
                                   \item[(2).]  The ball $B_{\ee^{-1}}(0)$ can be decomposed into three parts as follows:
  \begin{itemize}
  \item the high curvature part $\mathbf{H}$, which is defined by  $\mathbf{H}=\mathbf{H}(\epsilon, \Sigma_t)=\{x\in \RR^3\; |\; |x|<\epsilon^{-1},  d(x,\mathbf{S}) <\frac {\epsilon}2 \}.$ Here $d$ denotes the Euclidean distance in $\RR^3$.
        \item  the thick part $\mathbf{TK}$, which is defined by
       \begin{align*}
          \mathbf{TK}&=\mathbf{TK}(\epsilon, \Sigma_t)\\&=\Big\{x\in \RR^3\; \Big| \;|x|<\epsilon^{-1}, \; \textrm{there is a continuous curve $\gamma \subset B_{\epsilon^{-1}}(0) \backslash (\mathbf{H} \cup \Sigma_t)$} \\
                             &\qquad \qquad \textrm{ connecting $x$ and some $y$ with} \; B(y,\epsilon) \subset B_{\epsilon^{-1}}(0) \backslash (\mathbf{H} \cup \Sigma_t)\Big\}.
       \end{align*}
  \item the thin part $\mathbf{TN}$, which is defined by  $\mathbf{TN}=\mathbf{TN}(\epsilon, \Sigma_t)=B_{\epsilon^{-1}}(0) \backslash (\mathbf{H} \cup \mathbf{TK})$.

  \end{itemize}
                                 \end{enumerate}
 \end{defi}

We remind the readers that the decomposition in the above definition depends on the fact that the limit plane has multiplicity more than one.
 Intuitively, the high curvature part  $\mathbf{H}$ is the neighborhood of points with large second fundamental form(c.f. Figure~\ref{fig:decomposition}).
 The thin part $\mathbf{TN}$ is the domain between the top and bottom sheets.
 The thick part is the union of path connected components of  the domain ``outside" the sheets.
 Note that in the above definition, the existence of curve $\gamma$ is to guarantee the path-connectedness.  Because of the boundary issue,  some points in $\mathbf{TK}$ may be very close
 to the points in $\mathbf{TN}$ or $\mathbf{H}$.

 \begin{rem}
  The decomposition of space is motivated by the decomposition of Ricci flow time slices in Chen-Wang~\cite{[CW2]}.
 \label{rem:MD05_1}
 \end{rem}

 \begin{lem} \label{lma:GD18_1}
   For each fixed $\epsilon$, we have
   $
       \displaystyle \lim_{t \to \infty}   |\mathbf{TN}(\epsilon, \Sigma_{t})| =0.
   $ Here the notation $|\Om|$ denotes the volume of $\Om$ with respect to the standard metric on $\RR^3.$
 \end{lem}

\begin{proof}
  For otherwise, we can find an $\epsilon$ and a sequence $t_i \to \infty$ such that
\begin{align}
  \lim_{i \to \infty}  |\mathbf{TN}(\epsilon, \Si_{t_i})| \geq \kappa_0>0.
\label{eqn:GD18_4}
\end{align}
However, $\Sigma_{t_i}$ converges locally smoothly to a plane $\Sigma_{\infty}$ passing through the origin away from the singular set $\cS_0\subset \Si_{\infty}.$ Therefore, the sets $\mathbf{S}(\epsilon, \Si_{t_i})$ converge to  $\cS_0$ as $i\ri +\infty$.
Without loss of generality, we can assume that $\Sigma_{\infty}$ is determined by $x_3=0$ and the singular set $\cS_0=\{(1,0,0)\}$.
Then the high curvature parts $\mathbf{H}(\epsilon, \Si_{t_i})$ converge to $B_{\frac {\ee}2}((1,0,0))$(c.f. Figure~\ref{fig:limitdecomposition}).

 \begin{figure}
 \begin{center}
 \psfrag{A}[c][c]{$\mathbf{TK}=B_{\epsilon^{-1}}(0) \backslash \left( \Sigma_{\infty} \cup B_{\frac{\epsilon}{2}}(\mathcal{S}_0)\right)$}
 \psfrag{B}[c][c]{\color{blue} $\mathbf{TN}=\Sigma_{\infty} \cap \left( B_{\epsilon^{-1}}(0) \backslash B_{\frac{\epsilon}{2}}(\mathcal{S}_0)\right)$}
 \psfrag{C}[c][c]{\color{red}  $\mathbf{H}=B_{\frac{\epsilon}{2}}(\mathcal{S}_0)$}
 \psfrag{D}[c][c]{$\mathcal{S}_0$}
 \psfrag{E}[c][c]{$0$}
 \psfrag{x}[c][c]{$x_1$}
 \psfrag{y}[c][c]{$x_2$}
 \psfrag{z}[c][c]{$x_3$}
 \includegraphics[width=0.5 \columnwidth]{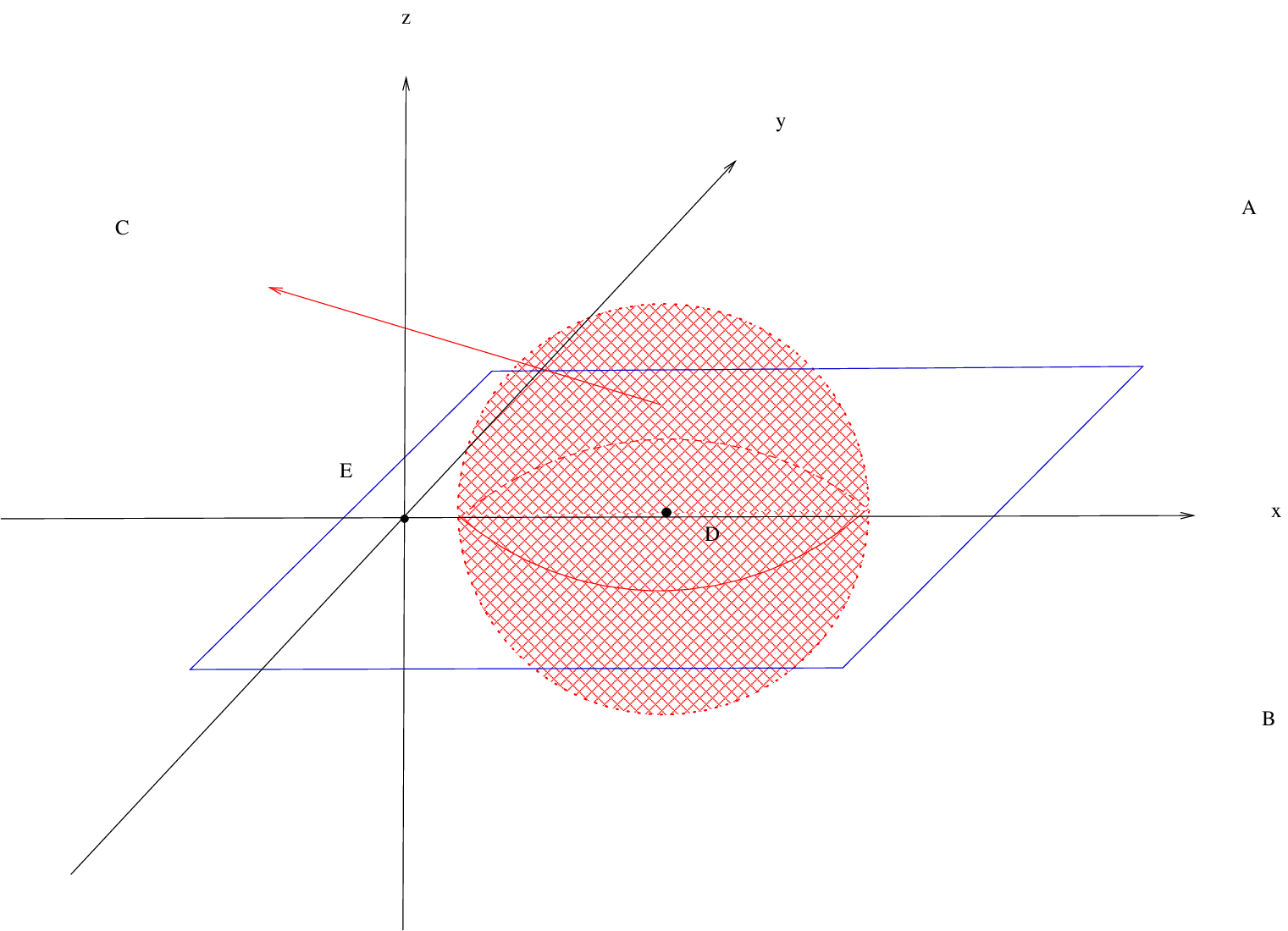}
 \caption{Decomposition in the limit space}
 \label{fig:limitdecomposition}
 \end{center}
 \end{figure}

Note that $B_{\ee^{-1}}(0) \backslash (\Sigma_{\infty} \cup   B_{\frac {\ee}2}((1,0,0)))$ contains two parts $\hat{B}^{+} \cup \hat{B}^{-}$, where
\begin{align*}
  &\hat{B}^{+}=\Big\{(x_1, x_2, x_3) \;\Big|\; x_1^2+x_2^2+x_3^2<\epsilon^{-2}, \; (x_1-1)^2+x_2^2+x_3^2>\frac {\epsilon^2}4, \; x_3>0\Big\}, \\
  &\hat{B}^{-}=\Big\{(x_1, x_2, x_3)  \;\Big|\; x_1^2+x_2^2+x_3^2<\epsilon^{-2}, \; (x_1-1)^2+x_2^2+x_3^2>\frac {\epsilon^2}4, \;x_3<0\Big\}.
\end{align*}
Fix an arbitrary point $x \in \hat{B}^{+}$. It is not hard to see that $x$ can be connected to $(0,0,1)$ by a continuous curve $\gamma \subset \hat{B}^{+}$.
Furthermore, it is clear that
$B_{\epsilon}((0,0,1)) \cap B_{\epsilon}((1,0,0))=\emptyset$.
Similar argument applies for $\hat{B}^{-}$. Therefore, we see that $\hat{B}^{+} \cup \hat{B}^{-}$ are contained in the limit of $\mathbf{TK}(\epsilon, \Si_{t_i})$.
Recall that $\mathbf{H}(\epsilon, \Si_{t_i})$ converges to $B_{\frac {\ee}2}((1,0,0))$.  Following  the  definition of $\mathbf{TN}$,  the Hausdorff limit of
$\mathbf{TN}(\epsilon, \Si_{t_i})$ is a subset of $( B_{\ee^{-1}}(0) \cap \Sigma_{\infty} )\backslash B_{\frac {\ee}2}((1,0,0))$.
Thus, for large $t_i$ we know $\mathbf{TN}(\epsilon, \Si_{t_i})$ lies in a small neighborhood of $( B_{\ee^{-1}}(0) \cap \Sigma_{\infty} )\backslash B_{\frac {\ee}2}((1,0,0))$. In other words, for any $\dd>0$ and large $t_i$ we have $\mathbf{TN}(\epsilon, \Si_{t_i})\subset \Om_{\ee, \dd}$, where
$$\Om_{\ee, \dd}=\Big\{(x_1, x_2, x_3)\in B_{\ee^{-1}}(0)\;\Big|\;(x_1-1)^2+x_2^2+x_3^2\geq \frac {\ee^2}{16}, \; |x_3|<\dd\Big\}.$$
Then for large $t_i$,
\beq |\mathbf{TN}(\epsilon, \Si_{t_i})|\leq |\Om_{\ee, \dd}|. \label{eqn:001} \eeq
Taking  $i\ri \infty$ in (\ref{eqn:001}) we have that the limit
of $|\mathbf{TN}(\epsilon, \Si_{t_i})|$ is  arbitrary small.
This contradicts the assumption (\ref{eqn:GD18_4}).
\end{proof}

It is not hard to observe that
\begin{lem}\label{lma:GD18_3}
 If $\Sigma_{\infty}$ has multiplicity more than one, then for sufficiently large $t_i$ we have $$|\mathbf{TN}(\epsilon, \Si_{t_i})|>0.$$
\end{lem}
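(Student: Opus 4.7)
The plan is to identify a genuinely three-dimensional slab trapped between two adjacent sheets of $\Sigma_{t_i}$ and to show that it must be contained in the thin part $\mathbf{TN}(\epsilon,\Sigma_{t_i})$. Such a slab comes for free from the smooth multiplicity-$k$ convergence $\Sigma_{t_i}\to \Sigma_\infty$ with $k\geq 2$ near any regular point of the limit.

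First I pick $p\in \Sigma_\infty \cap B_{\frac{1}{2}\epsilon^{-1}}(0)$ with $d(p,\mathcal{S}_0)\geq \epsilon$, and a small radius $r<\frac{\epsilon}{4}$ with $B_{2r}(p)\cap \mathcal{S}_0=\emptyset$. By Lemma~\ref{lem:A001} together with the multiplicity-$k$ hypothesis, for $i$ sufficiently large $\Sigma_{t_i}\cap B_{2r}(p)$ decomposes into $k$ disjoint smooth graphs $u_i^1<u_i^2<\cdots<u_i^k$ over a disk in $T_p\Sigma_\infty$, all converging smoothly to $0$. In particular $|A_i|(\cdot,t_i)\leq \epsilon^{-1}$ on $B_{2r}(p)\cap \Sigma_{t_i}$ for large $i$, which forces $\mathbf{S}(\epsilon,\Sigma_{t_i})\cap B_{2r}(p)=\emptyset$, hence $\mathbf{H}(\epsilon,\Sigma_{t_i})\cap B_r(p)=\emptyset$.

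Next I consider the slab
$$ W_i := \Bigl\{(x,z)\; : \; x\in D_{r/2}\subset T_p\Sigma_\infty,\; u_i^1(x)<z<u_i^2(x)\Bigr\}. $$
It is an open subset of $B_r(p)\subset B_{\epsilon^{-1}}(0)$, disjoint from both $\Sigma_{t_i}$ and $\mathbf{H}(\epsilon,\Sigma_{t_i})$, and of positive three-dimensional Lebesgue measure. The crucial claim is that $W_i\cap \mathbf{TK}(\epsilon,\Sigma_{t_i})=\emptyset$. Fix $w\in W_i$ and let $\Omega_w$ denote the connected component of $B_{\epsilon^{-1}}(0)\setminus \bigl(\mathbf{H}(\epsilon,\Sigma_{t_i})\cup \Sigma_{t_i}\bigr)$ containing $w$. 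Locally near $p$, $\Omega_w$ is sandwiched between the two adjacent sheets $u_i^1$ and $u_i^2$; globally, any path in $\Omega_w$ that would attempt to escape this slab must either cross $\Sigma_{t_i}$ (forbidden) or exit through $\mathbf{H}$ (also forbidden), so $\Omega_w$ remains wedged between two adjacent sheets of $\Sigma_{t_i}\setminus \mathbf{H}$. Smooth multiplicity convergence then guarantees that the transverse separation between adjacent sheets tends to $0$ uniformly on $B_{\epsilon^{-1}}(0)\setminus B_{\epsilon/4}(\mathcal{S}_0)$. Hence for $i$ large, no Euclidean ball of radius $\epsilon$ fits inside $\Omega_w$, contradicting the definition of $\mathbf{TK}$. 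This gives $W_i\subset \mathbf{TN}(\epsilon,\Sigma_{t_i})$, and therefore $|\mathbf{TN}(\epsilon,\Sigma_{t_i})|\geq |W_i|>0$.

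The main obstacle is the global trapping argument for $\Omega_w$: one must rule out the possibility that this component enlarges by snaking around the surface and eventually joins one of the ``fat'' components analogous to $\hat B^\pm$ in the proof of Lemma~\ref{lma:GD18_1}. The role of removing $\mathbf{H}$ is precisely to disconnect any such route, since every transition from one inter-sheet region to another must pass through a region of large second fundamental form, which is exactly what $\mathbf{H}$ swallows up. Combined with the uniform thinness of the multi-graph structure away from $\mathcal{S}_0$, this pins $\Omega_w$ between two adjacent sheets and rules out the existence of any inscribed $\epsilon$-ball, completing the contradiction.
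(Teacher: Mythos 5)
Your proof is correct and takes essentially the same approach as the paper's, which is extremely terse---just two sentences asserting that multi-graph convergence places all components of $\Sigma_{t_i}\cap B_{\epsilon^{-1}}(0)\setminus\mathbf{H}$ inside an $\frac{\epsilon}{2}$-band around $\Sigma_\infty$, whence $\mathbf{TN}$ is nonempty with positive measure; your write-up supplies the trapping argument (the region between adjacent sheets cannot connect, avoiding $\Sigma_{t_i}\cup\mathbf{H}$, to a place admitting an inscribed $\epsilon$-ball) that the paper leaves implicit. One minor slip: from $\mathbf{S}(\epsilon,\Sigma_{t_i})\cap B_{2r}(p)=\emptyset$ you cannot infer $\mathbf{H}(\epsilon,\Sigma_{t_i})\cap B_r(p)=\emptyset$ as stated, since $\mathbf{H}$ is the $\frac{\epsilon}{2}$-neighborhood of $\mathbf{S}$ while $r<\frac{\epsilon}{4}$; the correct justification is that $\mathbf{S}(\epsilon,\Sigma_{t_i})$ concentrates in an arbitrarily small neighborhood of $\mathcal{S}_0$ as $i\to\infty$, so $\mathbf{H}$ is eventually contained in $B_{\frac{\epsilon}{2}+o(1)}(\mathcal{S}_0)$, which is disjoint from $B_r(p)$ because $d(p,\mathcal{S}_0)\geq\epsilon$.
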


\begin{proof}Since $\Si_t$ is embedded and $\Si_{t_i}$ converges locally smoothly to the limit plane $\Si_{\infty}$, all components of $(\Si_{t_i}\cap B_{\ee^{-1}}(0))\b \mathbf{H}(\ee, \Si_{t_i})$ lie in the $\frac {\ee}2$-neighborhood of the plane $\Si_{\infty}$ for large $t_i$. If $\Sigma_{\infty}$ has multiplicity more than one,  the space between the top and bottom sheets of $(\Si_{t_i}\cap B_{\ee^{-1}}(0))\b \mathbf{H}(\ee, \Si_{t_i})$ belongs to $\mathbf{TN}$. Thus, by the definition of $\mathbf{TN}$ we have that $\mathbf{TN}(\epsilon, \Si_{t_i}) $ is nonempty and
$$|\mathbf{TN}(\epsilon, \Si_{t_i})|>0.$$

\end{proof}

However, in general we do not know whether $|\mathbf{TN}(\epsilon, \Sigma_t)|$ is a continuous function of $t$, but we can choose $t_i$ carefully such that $|\mathbf{TN}(\epsilon, \Si_{t})|$ is bounded on a time interval. Combining Lemma \ref{lma:GD18_1} with Lemma \ref{lma:GD18_3}, we have

\begin{lem}\label{lma:GC05_3}
There is a sequence of times $t_i \to \infty$ such that
\begin{align}
  \sup_{t_i \leq t \leq t_i + i} |\mathbf{TN}(\epsilon, \Si_{t})| \leq 2 |\mathbf{TN}(\epsilon, \Si_{t_i})|.
  \label{eqn:GC05_4}
\end{align}
\end{lem}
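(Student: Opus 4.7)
Set $f(t):=|\mathbf{TN}(\epsilon,\Sigma_t)|$. Two soft facts will be used: (i) $\mathbf{TN}(\epsilon,\Sigma_t)\subset B_{\epsilon^{-1}}(0)$, so $0\le f(t)\le \tfrac{4\pi}{3}\epsilon^{-3}$ for all $t$; (ii) by Lemma~\ref{lma:GD18_1}, $\lim_{t\to\infty}f(t)=0$. Note that $f$ need not be continuous in $t$, because the combinatorial decomposition in Definition~\ref{def:GD18_1} can jump as $\Sigma_t$ evolves.

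The plan is to prove the following point-selection claim: for every positive integer $i$, the set
\[
A_i:=\Big\{t\ge 0\,\Big|\,\sup_{s\in[t,t+i]}f(s)\le 2f(t)\Big\}
\]
is unbounded in $[0,\infty)$. Granting this, one picks $t_i\in A_i$ with $t_i\ge i$; then $t_i\to\infty$, and by the defining property of $A_i$ the inequality \eqref{eqn:GC05_4} holds.

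To prove the claim I argue by contradiction. Suppose $A_i$ is bounded above by some $T<\infty$. Then for every $t\ge T$ we have $\sup_{s\in[t,t+i]}f(s)>2f(t)$. Starting from any $s_0\ge T$, I build inductively a sequence $s_0<s_1<s_2<\cdots$ with $s_{k+1}\in[s_k,s_k+i]$ and $f(s_{k+1})>2f(s_k)$. Indeed, given $s_k$, the strict inequality $\sup_{s\in[s_k,s_k+i]}f(s)>2f(s_k)$ together with the definition of supremum produces $s_{k+1}\in[s_k,s_k+i]$ with $f(s_{k+1})>2f(s_k)$; since $f\ge 0$ forces $s_{k+1}\ne s_k$, the sequence is strictly increasing. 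If $f(s_0)>0$, iteration yields $f(s_k)>2^k f(s_0)\to\infty$, contradicting the uniform bound $f\le \tfrac{4\pi}{3}\epsilon^{-3}$. If instead $f(s_0)=0$, the first step gives $f(s_1)>0$ and then the same iteration starting at $s_1$ forces $f(s_k)>2^{k-1}f(s_1)\to\infty$, again a contradiction. Hence $A_i$ is unbounded, and the claim, and therefore the lemma, follows.

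The only subtle point is handling the possibly degenerate case $f(t)=0$ and the possible discontinuity of $f$; neither is a genuine obstacle because the argument above uses only $f\ge 0$, the supremum's defining property, and the uniform upper bound on $f$. In particular, Lemma~\ref{lma:GD18_1} is not literally needed for the contradiction (boundedness of $f$ already suffices), but it is what guarantees the flavor of the lemma: the selected times $t_i$ can be taken with $f(t_i)$ arbitrarily small, which is how the lemma will be used downstream.
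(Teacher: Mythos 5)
Your proof is correct and uses essentially the same doubling argument as the paper, just organized by contradiction (showing the admissible set $A_i$ is unbounded, using that $|\mathbf{TN}|$ is bounded above) rather than by the paper's explicit iterated-search construction of the $t_i$. Your explicit handling of the degenerate case $f(s_0)=0$ is a small refinement the paper leaves implicit, and your observation that Lemma~\ref{lma:GD18_1} is not actually invoked in the proof is accurate.
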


\begin{proof}By Lemma \ref{lma:GD18_3} we can find $s_1\geq 1$ such that  \beq |\mathbf{TN}(\epsilon, \Si_{s_1})|>0. \label{eq:Z005}\eeq
We search for time $t \in [s_1, s_1+1]$ satisfying $|\mathbf{TN}(\epsilon, \Si_t)|> 2|\mathbf{TN}(\epsilon, \Si_{s_1})|$. If no such time exists, then we set
  $t_1=s_1$. Otherwise, we choose such a time and denote it by $s_1^{(1)}$. Then we search the time interval $[s_1^{(1)}, s_1^{(1)}+1]$. Inductively, we search
  $[s_1^{(k)}, s_1^{(k)}+1]$. If we have
  \begin{align*}
    \sup_{t \in [s_1^{(k)}, s_1^{(k)}+1]} |\mathbf{TN}(\epsilon, \Si_t)| \leq 2 |\mathbf{TN}(\epsilon, \Si_{s_1^{(k)}})|,
  \end{align*}
  then we denote $t_1=s_1^{(k)}$ and stop the searching process. Otherwise, we choose a time $s_1^{(k+1)} \in [s_1^{(k)}, s_1^{(k)}+1]$ with more than doubled
  $|\mathbf{TN}|$ value and continue the process. Note that
  \begin{align*}
    |\mathbf{TN}(\epsilon, \Si_{s_1^{(k)}})| \geq 2^{k} |\mathbf{TN}(\epsilon, \Si_{s_1})| \to \infty,  \quad \textrm{as} \quad k \to \infty,
  \end{align*} where we used (\ref{eq:Z005}).
  Since by Lemma \ref{lma:GD18_1} $|\mathbf{TN}(\epsilon, \Sigma_{t})| $ tends to zero , this process must stop in finite steps.
  After we find $t_1$, set $s_2^{(0)}=t_1 +1$ and continue the previous process to find time in $[s_2^{(0)}, s_2^{(0)}+2]$ with more than
  doubled $|\mathbf{TN}|$-value, with slight change that the time-interval has length $2$. Similarly, for some finite $k$, we have
  \begin{align*}
      \sup_{t \in [s_2^{(k)}, s_2^{(k)}+2]} |\mathbf{TN}(\epsilon, \Si_t)| \leq 2 |\mathbf{TN}(\epsilon, \Si_{s_2^{(k)}})|.
  \end{align*}
  Then we define $t_2=s_2^{(k)}$.
  Inductively, after we find $t_l$, we set $s_{l+1}^{(0)}=t_{l}+l$. Then we start the process to search time in $[s_{l+1}^{(0)}, s_{l+1}^{(0)}+l+1]$ with
  more than doubled $|\mathbf{TN}|$ value.
  This process is well defined. From its construction, it is clear that $t_i \to \infty$ and satisfies (\ref{eqn:GC05_4}). The lemma is proved. \\
\end{proof}

Let $O(3)$ be the group of all rotations in $\RR^3$.  It is clear that $O(3)$ is compact and preserves many geometric quantities including distance,  mean curvature and second fundamental form.
Suppose $\mathbf{x}(t)$ is a rescaled mean curvature flow solution of (\ref{eq:BB001}), then $\sigma \circ \mathbf{x}$ is again a solution of (\ref{eq:BB001}) for each fixed $\sigma \in O(3)$.
Therefore, up to rotations, we can always assume the limit plane in Lemma ~\ref{lem:A001} to be a horizonal plane.  For if $\Sigma_{i,t}$ converges to some plane $P'$ passing through the origin, we can choose
a rotation $\sigma$ such that $\sigma(P')$ is a horizontal plane.  Then it is clear that $\sigma \circ \Sigma_{i,t}$ converges to the horizontal plane $\sigma(P')$.
It is also important to note that the quantities defined in Definition~\ref{def:GD18_1} are invariant under the action of  $\sigma \in O(3)$.
Consequently,  for the flow $\sigma \circ \Sigma_{i,t}$,  the estimate (\ref{eqn:GC05_4}) becomes
 \begin{align*}
      \sup_{t_i \leq t \leq t_i + i} |\mathbf{TN}(\epsilon, \sigma \circ \Si_{t})| \leq 2 |\mathbf{TN}(\epsilon, \sigma \circ \Si_{t_i})|.
 \end{align*}
Therefore, replacing $\Sigma_{i,t}$ by $\sigma \circ \Sigma_{i,t}$ if necessary, we can always assume the limit plane is horizontal and (\ref{eqn:GC05_4}) holds.

\subsection{Construction of auxiliary functions}
\label{subsec:auxiliary}

Let $P$ be the horizontal plane
\begin{align}
    P:=\{(x_1, x_2, x_3)\;|\,x_3=0\}\subset \RR^3.    \label{eqn:PE14_1}
\end{align}
For each pair of fixed positive numbers $r_1, r_2$ with $r_1<r_2$,  we define
\begin{align}
  \Ann(r_1, r_2) :=   \Big( B_{r_2}(0) \backslash B_{r_1}(0)\Big) \cap P.
\end{align}
Based on the choice of $\epsilon$ and subsequences in Lemma~\ref{lma:GC05_3}, on $\Ann(\epsilon, \epsilon^{-1}) \times [0,\infty)$,  we shall construct an auxiliary smooth function $w$
which will be used to show the $L$-stability of the plane $P$ in the next subsection(c.f. Lemma~\ref{lma:GB24_2} and Lemma~\ref{lem:Z}).
The existence of such $w$,  which is the main result of this subsection, is provided by the the following proposition.

\begin{prop}\label{prn:PE14_1}
Under the assumption of Theorem \ref{theo:removable}, if the multiplicity of the convergence is at least two, then for each sufficiently small $\epsilon>0$, we can find a smooth function $w$ defined on $\Ann(\epsilon, \epsilon^{-1}) \times [0, \infty)$ such that
 \begin{align}
   \frac{\partial w}{\partial t}=L w :=\Delta_0 w -\frac{1}{2} \langle x, \nabla w\rangle + \frac{1}{2} w,    \label{eqn:PE14_2}
 \end{align} where $\Delta_0$ is the standard Laplacian operator on the plane $P.$
 Furthermore, there is a constant $C>0$ independent of $t$ such that
\beqn
  0<w(x, t) &<& C, \quad \forall \; (x,t) \in \Ann(\epsilon, \epsilon^{-1}) \times [0, \infty),     \label{eqn:PE14_3}\\
  w(x, 0)&>& \frac 1C, \quad \forall \; x \in \Ann(\epsilon, \epsilon^{-1}).      \label{eqn:PE14_15}
\eeqn\\

\end{prop}

The function $w$ in (\ref{eqn:PE14_2}) is the renormalized ``height-difference" function.  Its existence boils down to the discussion in subsection~\ref{subsec:conv} and subsection~\ref{subsec:decompose}.
In summary,  before we prove Proposition~\ref{prn:PE14_1},  we already have the following properties  in the remainder discussion in this subsection.

\begin{enumerate}
\item[(1).] $\epsilon$ is fixed.
\item[(2).] The sequence $\{t_i\}$ is chosen as in Lemma \ref{lma:GC05_3}. Thus, the volume of $\mathbf{TN}$ increases slowly in the sense that
      \begin{align}
      \sup_{t_i \leq t \leq t_i + i} |\mathbf{TN}(\epsilon, \Si_{t})| \leq 2 |\mathbf{TN}(\epsilon, \Si_{t_i})|.
       \label{eqn:PE14_14}
     \end{align}

\item[(3).] For any $T>0$, there is a subsequence, still denoted by $\{t_i\}$, such that $\{\Sigma_{i,t}, -T<t<T\}$ converges to a limit flow,  with multiplicity $N_0$, on the plane
      \begin{align}
         \Si_{\infty}=P=\{(x_1, x_2, x_3)\;|\,x_3=0\}\subset \RR^3.    \label{eqn:PE09_1}
      \end{align} Here we assumed $\Si_{i, t}=\Si_{t_i+t}$ as before.
The convergence is smooth away from $\cS=\{(x, t)\;|\;x\in \cS_t, \;t\in (-T, T) \}$, which satisfies
         \begin{align}
           \cS_t=e^{\frac t2}\cS_0,     \label{eqn:PE14_4}
         \end{align}
The set $\mathcal{S}_0=\left\{ p_1, p_2, \cdots, p_l\right\}$ satisfies
         \begin{align}
          l \leq M_0,    \label{eqn:PE14_13}
         \end{align} where $M_0$ is the constant in the right-hand side of (\ref{eqn:PE10_7}).
\item[(4).] By Lemma \ref{lem:A001}, the limit plane $P$ and $\cS_0$ are independent of the choice of $T.$ In other words,
for different $T$  we have different subsequences of $\{\Sigma_{i,t}, -T<t<T\}$ but these two subsequences converge to the same limit plane $P$ with the same  $\cS_0$.
\end{enumerate}

The following lemma is a direct consequence of (\ref{eqn:PE14_4}).
\begin{lem}\label{lem:t}
 There exists $t_{\ee}>0$ depending only on $\ee$ and $\cS_0$ such that
 \begin{align}
   \Ann(\epsilon, \epsilon^{-1}) \cap \cS_t=\emptyset, \quad \forall \; t > t_{\epsilon}.  \label{eqn:PE14_10}
 \end{align}
Furthermore, if $\;0 \notin \mathcal{S}_0$, then (\ref{eqn:PE14_10}) can be improved as
 \begin{align}
  \Big(B_{\epsilon^{-1}}(0) \cap P\Big) \cap \cS_t=\emptyset, \quad \forall \; t > t_{\epsilon}.  \label{eqn:PE14_11}
 \end{align}

\end{lem}

\begin{proof}

In light of (\ref{eqn:PE14_4}), all points in $\cS_t\b \{0\}$ will move further away from the point $\{0\}$ when $t$ is increasing.
Set
 \beq \min \cS_0:=\min\{|y|\,\;|\; y\in \cS_0\b \{0\}\}>0,\quad t_{\ee}:=2\log \frac 1{\ee \min\cS_0}.\label{eqB:ti}
\eeq
Then for any $t\geq t_{\ee}$ and any $y_0\in  \cS_0\b\{0\}$ we have $e^{\frac t2}|y_0|\geq\ee^{-1}$. This implies that
\begin{align}
(\cS_t\b \{0\})\cap B_{\ee^{-1}}(0)=\emptyset, \quad \forall \; t>t_{\epsilon}.\label{eqC:001}
\end{align}
Then both (\ref{eqn:PE14_10}) and (\ref{eqn:PE14_11}) follow from (\ref{eqC:001}).
\end{proof}

For the convergence from $\Sigma_{i,t}$ to $\Sigma_{\infty}$, it is clear that the origin $0$ is a natural base point.
It is possible that $0 \in \mathcal{S}_0$.   If this is the case, then $0 \in \mathcal{S}_t$ for each $t$, which is very different from all other singular points in $\mathcal{S}_0$ by Lemma~\ref{lem:t}.
Therefore, if $0 \in \mathcal{S}_0$, it is essential in the sense that it cannot be perturbed away by varying $t$.
However, for later application, we prefer to choose a base point uniformly away from singular set, whose existence is  guaranteed  by the following lemma.

\begin{lem}\label{lma:PE14_1}
 There exists a point $x_0 \in \Ann(2\epsilon,1)$ such that
 \begin{align}
  (B_{4\ee}(x_0)\cap \Si_{\infty})\cap \cS_t=\emptyset, \quad \forall\; t\in (-\infty, \infty). \label{eq:Y001}
 \end{align}

\end{lem}

\begin{proof}

Denote by $\gamma_j=\{sp_j\;|\; s\geq 0\}$, where $p_j$ is any point in $\cS_0=\{p_1, p_2, \cdots, p_l\}.$
It  is clear that $\gamma_j$ is the ray  passing through $p_j$ with the initial point $0$ if $p_j \neq 0$ and $\gamma_j=\{0\}$ if  $p_j=0$.
Recall that the number of points in $\cS_0$ is uniformly bounded by (\ref{eqn:PE10_7}) and $\epsilon$ is very small,  we can find a point $x_0\in \Ann(2\ee, 1)$ such that
\beq (B_{4\ee}(x_0)\cap \Si_{\infty})\cap (\cup_{j=1}^l \gamma_j)=\emptyset.\label{eq:X003}\eeq
One can see Figure~\ref{fig:regularbase} for intuition, where the case $l=3$ was illustrated.
Then (\ref{eq:Y001}) follows from the combination of (\ref{eqn:PE14_4}) and (\ref{eq:X003}).
\end{proof}

 \begin{figure}
 \begin{center}
 \psfrag{p1}[c][c]{$p_1 \in \gamma_1$}
 \psfrag{p2}[c][c]{$p_2 \in \gamma_2$}
 \psfrag{p3}[c][c]{$p_3 \in \gamma_3$}
 \psfrag{p4}[c][c]{$p_1 \in \gamma_4$}
 \psfrag{c1}[c][c]{$B_{4\epsilon}(x_0) \cap P$}
 \psfrag{c2}[c][c]{$B_{\epsilon}(0) \cap P$}
 \psfrag{c3}[c][c]{$B_{1}(0) \cap P$}
 \psfrag{O}[c][c]{$\textcolor{red}{0}$}
 \psfrag{x0}[c][c]{$\textcolor{blue}{x_0}$}
 \includegraphics[width=0.5 \columnwidth]{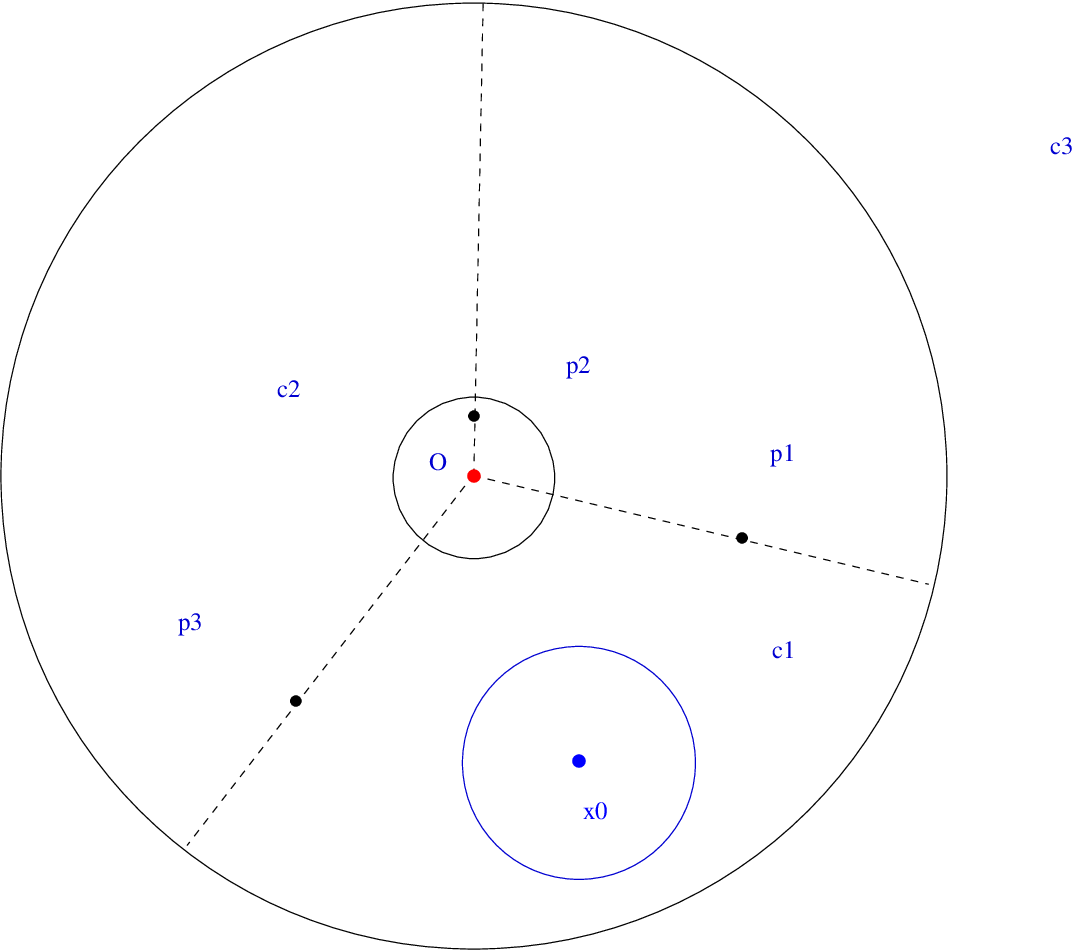}
 \caption{An alternate base point of the limit plane}
 \label{fig:regularbase}
 \end{center}
 \end{figure}

Fix an interval $I=(-T,T)$ and choose $t \in I$.
By (\ref{eq:Y001}), it is clear that the convergence from $\Sigma_{i,t}$ to $\Sigma_{\infty}$ around point $x_0$ is smooth with multiplicity $N_0$.
Let $\alpha_{x_0}$ be the vertical line passing through $x_0 \in P$. For large $i$, we have
\begin{align}
   \Sigma_{i,t} \cap \alpha_{x_0}=\left\{z_{i,t}^{(1)}, z_{i,t}^{(2)}, \cdots, z_{i,t}^{(N_0)} \right\},    \label{eqn:PE16_1}
\end{align}
where the points $z_{i,t}^{(j)}$ are ordered increasingly by their $x_3$-coordinates. Namely, we have
\begin{align}
   x_3\left( z_{i,t}^{(1)} \right) <  x_3\left( z_{i,t}^{(2)} \right)  < \cdots < x_3\left( z_{i,t}^{(N_0)} \right).   \label{eqn:PE16_2}
\end{align}
Note that near $x_0$, the smooth convergence guarantees that the normal directions of $\Sigma_{i,t}$ are almost perpendicular to the horizontal plane $P$.
Therefore, the vertical line $\alpha_{x_0}$ intersects $\Sigma_{i,t}$ transversally.
Together with the embeddedness of $\Sigma_{i,t}$,  this forces that all the inequalities in (\ref{eqn:PE16_2}) are strict.
By Lemma~\ref{lma:PE16_1}, the multiplicity is independent of time. Therefore,  for large $i$, (\ref{eqn:PE16_2}) holds for each $t \in (-T, T)$.
In particular, we have
\begin{align}
   x_3\left( z_{i,t}^{(j)} \right) <x_3\left( z_{i,t}^{(k)} \right),  \; \forall \; t \in (-T, T) ; \quad     \Leftrightarrow  \quad x_3\left( z_{i,0}^{(j)} \right) <x_3\left( z_{i,0}^{(k)} \right).
\label{eqn:PE16_3}
\end{align}
We define
\begin{align}
  &\Om_{\ee}(t) := (\Si_{\infty}\cap B_{\ee^{-1}}(0))\backslash \cup_{p\in \cS_t}B_{\ee}(p), \label{eqn:PE09_2}\\
  &\hat \Om_{\ee}(t) :=\{(x_1, x_2, x_3)\in \RR^3\;|\; (x_1, x_2, 0)\in \Om_{\ee}(t), x_3\in \RR\}, \label{eqn:PE09_3}\\
  &\Om_{\ee}(I) := \bigcap_{t\in I}\Om_{\ee}(t).  \label{eqn:PE09_4}
\end{align}
By its definition in (\ref{eqn:PE09_2}), it is clear that the convergence around $\Om_{\ee}(t)$ is smooth.  Therefore, $\Si_{i, t}\cap \hat{\Omega}_{\epsilon}(t)$ is a multi-sheet graph of exactly
$N_0$ sheet(c.f. Lemma~\ref{lma:PE16_1}).  We define the connected component of $\Si_{i, t}\cap \hat{\Omega}_{\epsilon}(t)$ to be the $j$-th sheet if this component contains the point $z_{i,t}^{(j)}$.
By (\ref{eqn:PE16_2}), we know that this is well defined and the choice of $j$ is independent of time.
Furthermore, each component of $\Si_{i, t}\cap \hat{\Omega}_{\epsilon}(t)$ is a graph of smooth functions $u_i^{(j)}(\cdot, t)$ over $\Omega_{\epsilon}(t)$ such that
\begin{align}
   u_i^{(1)}(q,t)< u_i^{(2)}(q,t) < \cdots <u_i^{(N_0)}(q,t), \quad \forall \;  q \in  \Omega_{\epsilon}(t), \; t \in (-T, T).    \label{eqn:PE14_7}
\end{align}
According to their construction, it is clear that
\begin{align*}
    u_i^{(j)}(x_0',t)=x_3\left( z_{i,t}^{(j)} \right), \quad \forall \; j \in \{1, 2, \cdots, N_0\},
\end{align*}
where $x_0'$ is the orthogonal projection image of $x_0$ from $\RR^3$ to the horizontal plane $P=\Sigma_{\infty}$.
The $j$-th component of  $\Si_{i, t}\cap \hat\Om_{\ee}(t)$ is called top sheet if $j=N_0$ and it is called bottom sheet if $j=1$.
For simplicity of notation, let $u_i^+(x, t)$ and $u_i^-(x, t)$ be the functions representing the top and bottom sheets, i.e.,
\begin{align}
   u_i^+(x, t) := u_i^{(N_0)}(x,t), \quad u_i^-(x, t) := u_i^{(1)}(x,t).    \label{eqn:PE14_8}
\end{align}
Then we denote the graphs of $u_i^{\pm}(x, t)$ over $\Omega_{\epsilon}$ by $\Si_{i, t}^{\pm}$ respectively.
It is not hard to see that $u_i^{\pm}(x, t)$ satisfy an evolution equation determined by the rescaled mean curvature flow.
We write down the details for the convenience of the readers.

\begin{lem}\label{lem:u} The functions $u_i^+(x, t)$ and $u_i^-(x, t)$ satisfy the equation on $\Om_{\ee}(I)\times I$
\beq
\pd ut=\Delta_0u-\frac {\Na^2 u(\Na u, \Na u)}{1+|\Na u|^2}
-\frac 12\langle x, \Na u\rangle+\frac u2. \label{eqn:002}
\eeq
\end{lem}
\begin{proof} Let $\Omega\subset P=\{(x_1, x_2, x_3)\;|\,x_3=0\}$. Consider the graph of the function $u: \Omega\times I\ri \RR$
$$\mathrm{Graph}_u=\{(x_1, x_2, u(x_1, x_2, t))\;|\; (x_1, x_2)\in \Omega, t\in I\}.$$
The upward pointing unit normal and the mean curvature are given by
\beq \n=\frac {(-u_{x_1}, -u_{x_2}, 1)}{\sqrt{1+|\Na u|^2}} ,\quad H=-\div\Big(\frac {\Na u}{\sqrt{1+|\Na u|^2}}\Big) \label{eqn:003}\eeq
respectively.
Thus, we have
\beq
H-\frac 12\langle \x, \n\rangle=-\div\Big(\frac {\Na u}{\sqrt{1+|\Na u|^2}}\Big)+\frac {x_1 u_{x_1}+x_2u_{x_2}-u}{2\sqrt{1+|\Na u|^2}}, \label{eqn:005}
\eeq where $\x=(x_1, x_2, u(x_1, x_2, t))$ denotes a point on $\mathrm{Graph}_u.$ On the other hand,  we have
\beq \langle \pd {\x}t, \n\rangle=\pd ut \frac 1{\sqrt{1+|\Na u|^2}}. \label{eqn:004} \eeq
Combining (\ref{eqn:005}) and (\ref{eqn:004}) with the equation of rescaled mean curvature flow (\ref{eq:RMCF0}), we have
\beq \pd ut=\sqrt{1+|\Na u|^2}\div\Big(\frac {\Na u}{\sqrt{1+|\Na u|^2}}\Big)-\frac 12 \langle x, \Na u \rangle+\frac u2, \label{eqn:100} \eeq where $x=(x_1, x_2, 0)\in \Omega$ and $\langle x, \Na u \rangle=x_1u_{x_1}+x_2u_{x_2}$.
Direct calculation shows that (\ref{eqn:100}) implies (\ref{eqn:002}).
Note that $u_i^+(x, t)$ and $u_i^-(x, t)$ denote the graph functions of $\Si_{i, t}^+$ and $\Si_{i, t}^-$ respectively. Therefore, $u_i^+(x, t)$ and $u_i^-(x, t)$  satisfy (\ref{eqn:002}).  The lemma is proved.
\end{proof}

Let  $u_i=u_i^+-u_i^-$ and $f_{i, s}= u_{i}^-+s u_i$ for $s\in [0, 1]$. Then $f_{i, 0}=u_{i}^-$ and $f_{i, 1}=u_i^+$. We define
$$G(s):=\frac {\Na^2 f_{i, s}(\Na f_{i, s}, \Na f_{i, s})}{1+|\Na f_{i, s}|^2}.$$
Therefore, we have
\beq
G(1)-G(0)=\int_0^1\, \pd {G(s)}s\,ds=a^{pq}u_{i, pq}+b^pu_{i, p}, \label{eq:F004}
\eeq where
\beqn
a^{pq}&=&\int_0^1\,\Big((1+|\Na f_{i, s}|)^{-1}\Na_p f_{i, s}\Na_qf_{i, s}\Big)\;ds, \label{eq:F005}\\
b^p&=&\int_0^1\,\Big(2(1+|\Na f_{i, s}|)^{-1}\Na_k\Na_pf_{i, s}\Na_k f_{i, s}\nonumber\\&&- 2(1+|\Na f_{i, s}|)^{-2}\Na_pf_{i, s} \Na^2 f_{i, s}(\Na f_{i, s}, \Na f_{i, s})\Big)\,ds.\label{eq:F006}
\eeqn
Combining (\ref{eqn:002}) with (\ref{eq:F004}), we get the equation of $u_i$
\beq
\pd {u_i}t=\Delta_0u_i-\frac 12\langle x, \Na u_i\rangle+\frac {u_i}2-a^{pq}u_{i, pq}-b^pu_{i, p} \label{eq:F008}
\eeq for any $(x, t)\in \Om_{\ee}(I)\times I$.
Since $\Si_{i, t}^+$ and $\Si_{i, t}^-$ converge locally smoothly to $\Si_{\infty}$ as $i\ri +\infty$, all derivatives of $u_i^+$ and $u_i^-$ converge to zero on $\Om_{\ee}(I)\times I$.
Therefore, by the expression (\ref{eq:F005})-(\ref{eq:F006}) all $C^{k, \al}$ norms of
$a^{pq}$ and $b^p$ are uniformly bounded when $t_i$ is large and tends to zero as $t_i\ri +\infty.$

Clearly, $u_i$ are positive solutions satisfying $u_i \to 0$.  We shall normalize $u_i$ to obtain $w_i$ which satisfies the linearization of the equation satisfied by $u_i$.
For this purpose, we need a space-time base point with uniform regularity.
Note that the space-time singular set $\mathcal{S}$ has good regularity (\ref{eqn:PE14_4}) and the number of points in $\cS_0$ is uniformly bounded by  (\ref{eqn:PE14_13}).
It is not hard to see that
\beq
\Om_{\frac {\ee}8}(0)\subset \Om_{\frac {\ee}{10}}(t),\quad \forall\; t\in [-2\ee', 2\ee'] \label{lma:PE14_2}
\eeq
for some $\ee'=\ee'(\cS_0, \ee)>0$.
Then $(x_0, \epsilon')$ is a good space-time base point.
We now normalize the function $u_i$ by
 \beq w_i(x, t) :=\frac {u_i(x, t)}{u_i(x_0, \ee')}, \quad \forall\;(x, t)\in \Om_{\ee}(I)\times I.  \label{eq:F001}\eeq
Then $w_i(x, t)$ is a positive function on $ \Om_{\ee}(I)\times I$ satisfying $w_i(x_0, \ee' )=1$ and
\beq
\pd {w_i}t=\Delta_0w_i-\frac 12\langle x, \Na w_i\rangle+\frac {w_i}2-a^{pq}w_{i, pq}-b^pw_{i, p},  \label{eq:F007}
\eeq where $a^{pq}$ and $b^p$ are defined by (\ref{eq:F005}) and (\ref{eq:F006}).\\

The estimate of $|\mathbf{TN}|$ in (\ref{eqn:PE14_14}) provides information on the growth rate of $w_i$, through the application of the following Lemma.
It basically means that the volume of the thin part is almost the volume of the region between the top and bottom sheets.

\begin{lem}\label{lem:equi}
For large $t_i$, we have
\beq
\int_{\Om_{\ee}(t)}\, u_i(x, t) \,d\mu_{\infty}\leq  |\mathbf{TN}(\epsilon, \Sigma_{i, t})|\leq \int_{\Om_{\frac {\ee}5}(t)}\, u_i(x, t) \,d\mu_{\infty},\label{eq:J001}
\eeq where $d\mu_{\infty}$ denotes the standard volume form of $\Si_{\infty}$.

\end{lem}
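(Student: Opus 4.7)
The plan is to bound $\mathbf{TN}(\epsilon,\Sigma_{i,t})$ from both sides by the Euclidean slab region trapped between the top and bottom graphs of the multi-sheet decomposition, and then apply Fubini to convert that Euclidean volume into the integral of $u_i$. Concretely, for $\rho\in(0,\epsilon]$ set
\[
R_\rho(t):=\left\{(x_1,x_2,x_3)\in\RR^3\,\Big|\,(x_1,x_2,0)\in\Omega_\rho(t),\ u_i^-(x_1,x_2,t)\le x_3\le u_i^+(x_1,x_2,t)\right\}.
\]
By Fubini $|R_\rho(t)|=\int_{\Omega_\rho(t)}u_i(x,t)\,d\mu_\infty$, so the lemma follows from the two inclusions $R_\epsilon(t)\subset\mathbf{TN}(\epsilon,\Sigma_{i,t})\subset R_{\epsilon/5}(t)$ for all $t\in(-T,T)$ and all $t_i\ge t_T$. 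By Lemma \ref{lem:A001} I may pick $t_T$ so large that, for every such $t_i$ and $t$, one has $\|u_i^{\pm}(\cdot,t)\|_{C^0(\Omega_{\epsilon/5}(t))}<\delta$ for a fixed $\delta<\epsilon/100$, and $\mathbf{S}(\epsilon,\Sigma_{i,t})$ lies within Hausdorff distance $\delta$ of $\mathcal{S}_t$; in particular $\mathbf{H}(\epsilon,\Sigma_{i,t})\subset B_{\epsilon/2+\delta}(\mathcal{S}_t)$.

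\emph{Lower bound.} Take $p=(x_1,x_2,x_3)\in R_\epsilon(t)$. Since $|x_3|<\delta$ and $(x_1,x_2,0)\in\Omega_\epsilon(t)\subset B_{\epsilon^{-1}}(0)$, we have $|p|<\epsilon^{-1}$; and since the horizontal projection of $p$ is at distance $\ge\epsilon$ from $\mathcal{S}_t$ while $\mathbf{S}\subset B_\delta(\mathcal{S}_t)$, we get $d(p,\mathbf{S})>\epsilon-2\delta>\epsilon/2$, so $p\notin\mathbf{H}$. If $p$ were in $\mathbf{TK}$, a connecting curve $\gamma\subset B_{\epsilon^{-1}}(0)\setminus(\mathbf{H}\cup\Sigma_{i,t})$ would join $p$ to a ball $B(y,\epsilon)\subset B_{\epsilon^{-1}}(0)\setminus(\mathbf{H}\cup\Sigma_{i,t})$. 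But $\gamma$ cannot cross any sheet of $\Sigma_{i,t}$, cannot enter $\mathbf{H}$ (which seals off the $\epsilon$-punctures of $\Omega_\epsilon(t)$), and cannot leave $B_{\epsilon^{-1}}(0)$, so $\gamma$ is confined to the slab between $\Sigma_{i,t}^-$ and $\Sigma_{i,t}^+$ over $\Omega_\epsilon(t)$, whose vertical thickness is at most $2\delta$. Hence $y$ lies in this thin slab, and the ball $B(y,\epsilon)$ of radius $\epsilon\gg2\delta$ must contain points above $\Sigma_{i,t}^+$ and below $\Sigma_{i,t}^-$, contradicting $B(y,\epsilon)\cap\Sigma_{i,t}=\emptyset$. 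Therefore $p\in\mathbf{TN}$.

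\emph{Upper bound.} Take $p=(x_1,x_2,x_3)\in\mathbf{TN}$. Then $|p|<\epsilon^{-1}$ and $d(p,\mathcal{S}_t)>\epsilon/2-\delta$. The crucial claim is that $u_i^-(x_1,x_2,t)\le x_3\le u_i^+(x_1,x_2,t)$: otherwise, say $x_3>u_i^+(x_1,x_2,t)$, and $p$ could be connected by a continuous path (lift vertically, then arc horizontally toward the $x_3$-axis above all sheets) to the ``top cap'' $\{y:y_1^2+y_2^2<\epsilon^{-2}/4,\ y_3=\epsilon^{-1}/2\}$, which for large $t_i$ lies inside $B_{\epsilon^{-1}}(0)\setminus(\mathbf{H}\cup\Sigma_{i,t})$ and admits an entire ball of radius $\epsilon$, forcing $p\in\mathbf{TK}$, a contradiction. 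Given the claim, $|x_3|\le\max(u_i^+,|u_i^-|)<\delta$, whence the horizontal distance from $(x_1,x_2)$ to any $s\in\mathcal{S}_t$ is at least $\sqrt{(\epsilon/2-\delta)^2-\delta^2}>\epsilon/5$. So $(x_1,x_2,0)\in\Omega_{\epsilon/5}(t)$ and $p\in R_{\epsilon/5}(t)$.

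\emph{Main obstacle.} The most delicate step is the path construction in the upper bound: showing that any point strictly above the top sheet (or strictly below the bottom sheet) really does lie in $\mathbf{TK}$, which requires producing an explicit path to a suitable $\epsilon$-ball that simultaneously avoids all sheets, avoids $\mathbf{H}$, and stays inside $B_{\epsilon^{-1}}(0)$. The boundary cases, when $p$ is close to $\partial B_{\epsilon^{-1}}(0)$ or when the horizontal projection of $p$ is close to a point of $\mathcal{S}_t$, must be handled by combining vertical and horizontal deformations, using in an essential way the smoothness of convergence of $\Sigma_{i,t}$ to $\Sigma_\infty$ away from $\mathcal{S}_t$ provided by Lemma \ref{lem:A001}. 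Once this is in place, both inclusions hold and Fubini yields \eqref{eq:J001}.
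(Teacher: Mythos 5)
Your proposal follows the same approach as the paper's own proof: both identify $\mathbf{TN}(\epsilon,\Sigma_{i,t})$ with the slab region trapped between the top and bottom graphs over the punctured domain (the paper calls it $\Phi(\rho,\Sigma_{i,t})$, you call it $R_\rho(t)$), prove the two inclusions $\Phi(\ee,\cdot)\subset\mathbf{TN}\subset\Phi(\ee/5,\cdot)$, and conclude by Fubini. You supply somewhat more detail on the topological steps (the path argument for why slab points are not in $\mathbf{TK}$, and why $\mathbf{TN}$-points lie between the sheets) which the paper leaves compressed in the phrases ``when $t_i$ is large'' and ``$x$ is sufficiently close to $\Sigma_\infty$''; otherwise the two proofs coincide.
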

\begin{proof}
For each $t$ and large $t_i$, we define
$$ \Phi(\ee, \Si_{i, t})=\{(x_1, x_2, x_3)\in \RR^3\;|\; x':=(x_1, x_2, 0)\in \Om_{\ee}(t), u_i^-(x', t)\leq x_3\leq u_i^+(x', t)\}. $$
Then the Euclidean volume of $\Phi(\ee, \Si_{i, t})$ can be calculated by
\beq |\Phi(\ee, \Si_{i, t})|=\int_{\Om_{\ee}(t)}\,u_i(x, t)\,d\mu_{\infty}. \label{eq:I002}\eeq
We claim that
\beq
\Phi(\ee, \Si_{i, t})\subset \mathbf{TN}(\epsilon, \Si_{i, t})\subset \Phi(\frac {\ee}5, \Si_{i, t}). \label{eq:I003}
\eeq
  One can check Figure~\ref{fig:multisheet} for intuition, where the case $\mathcal{S}_t=\{p_1\}$ is addressed
and $p_1'$ is the $(x_1, x_2)$ coordinate of the point $p_1$.

 \begin{figure}
 \begin{center}
 \psfrag{A}[c][c]{$x_3$}
 \psfrag{B}[c][c]{$x'=(x_1,x_2)$}
  \psfrag{C}[c][c]{$|x'-p_1'|=\frac{\epsilon}{5}$}
 \psfrag{D}[c][c]{$\textcolor{blue}{|x'-p_1'|=\epsilon}$}
 \psfrag{E}[c][c]{$x_3=u_i^{+}(x',t)$}
 \psfrag{F}[c][c]{$x_3=u_i^{-}(x',t)$}
 \psfrag{H}[c][c]{$\textcolor{red}{\mathbf{H}(\epsilon, \Sigma_{i,t})}$}
 \psfrag{p1}[c][c]{$p_1$}
 \includegraphics[width=0.5 \columnwidth]{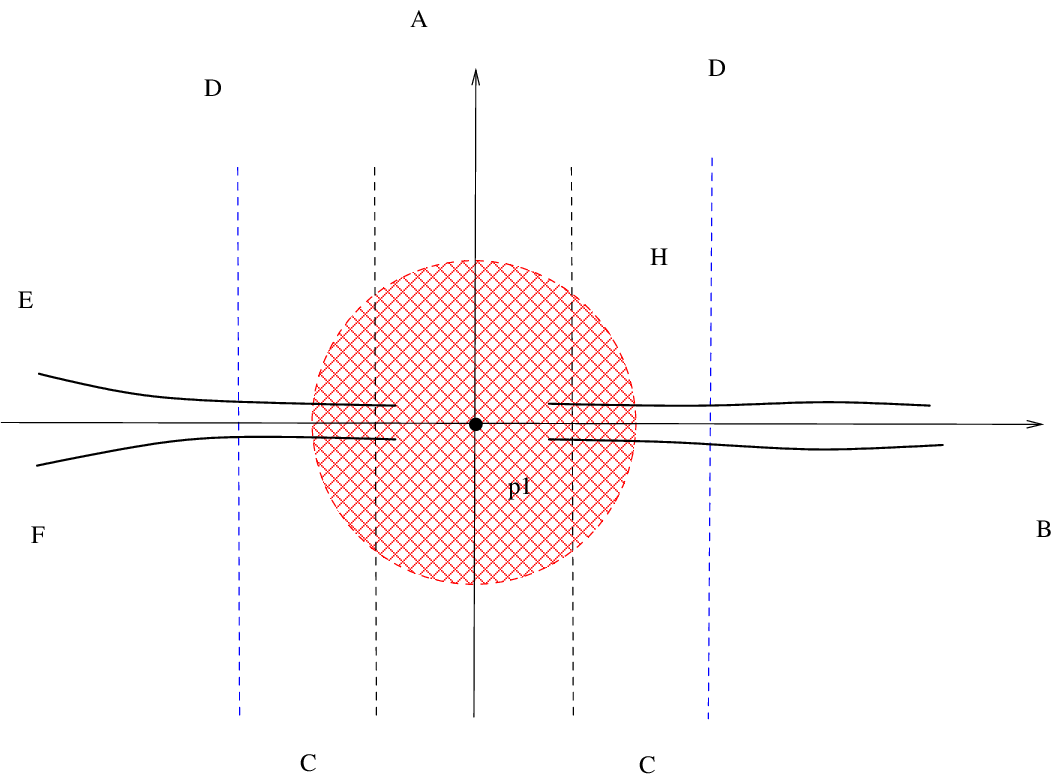}
 \caption{Thin part is almost the region between the top and bottom sheets}
 \label{fig:multisheet}
 \end{center}
 \end{figure}

Now we prove (\ref{eq:I003}) rigorously. Note that $\Si_{i, t}$ converges locally smoothly to $\Si_{\infty}$ away from $\cS_t$.
For each $t$, the set $\mathbf{S}(\ee, \Si_{i,t})$ converges to $\mathcal{S}_t$.
So for large $i$, we have
$$\mathbf{S}(\ee, \Si_{i, t})\subset B_{\frac {\ee}{10}}(\cS_t)=\cup_{p\in \cS_t}B_{\frac {\ee}{10}}(p).$$
By the definition of $\mathbf{H}$ in Definition~\ref{def:GD18_1}, we have
\beq \mathbf{H}(\ee, \Si_{i, t})\subset B_{\frac 34\ee}(\cS_t)\cap B_{\ee^{-1}}(0).\label{eq:K006}\eeq
For any $x=(x_1, x_2, x_3)\in \Phi(\ee, \Si_{i, t})$, we have
$(x_1, x_2, 0)\in \Om_{\ee}(t)$, which implies that $(x_1, x_2, x_3)\not\in B_{\ee}(\cS_t)$. Therefore, $x\not\in  \mathbf{H}(\ee, \Si_{i, t})$ and we have $ \Phi(\ee, \Si_{i, t})\subset \mathbf{TN}(\epsilon, \Si_{i, t}) $ when $t_i$ is large.  On the other hand,
for any $x\in \mathbf{TN}(\epsilon, \Si_{i, t})$, we have
$x\not\in \mathbf{TK}(\ee, \Si_{i, t})$ and $x\not\in \mathbf{H}(\ee, \Si_{i, t})$, which implies that $d(x, \mathbf{S})\geq\frac {\ee}2$ by the definition of $\mathbf{H}$. Since $\cS_t$ consists of finite many points, $d(x, \cS_t)$ is attained by some point $q_0\in \cS_t $.
Thus, we have
\beq d(x, \cS_t)=\min_{y\in \cS_t}d(x, y)=d(x, q_0)\geq \max_{z\in \mathbf{S}\cap B_{\frac {\ee}{10}}(q_0)}(d(x, z)-d(z, q_0))\geq \frac {\ee}4, \label{eq:K004}\eeq
where we used the fact that $d(x, z)\geq \frac {\ee}2$ and $d(z, q_0)\leq \frac {\ee}{10}.$ Moreover, since $x\not\in \mathbf{TK}(\ee, \Si_{i, t})$, $x$ is sufficiently close to $\Si_{\infty}$. This together with (\ref{eq:K004})
implies that $x\in \Phi(\frac {\ee}5, \Si_{i, t}) $ when $i$ is large. Therefore, (\ref{eq:I003}) is proved.
Thus, the inequality (\ref{eq:J001}) follows from (\ref{eq:I002}) and (\ref{eq:I003}). The lemma is proved.

\end{proof}

Combining the previous discussion, we obtain some uniform  estimates on the functions $w_i$.
\begin{lem}\label{lem:estimates}
For any compact set $K\Subset \Ann(2\ee, \frac 12\ee^{-1})$, $T>t_{\epsilon}+2$,
there exist two constants
\begin{align*}
 C_1=C_1(\ee, K, \cS_0, x_0)>0, \quad C_2=C_2(\ee, \cS_0, T,  x_0)>0
\end{align*}
such that for large $t_i$, we have
\beq C_2(\ee,  \cS_0, T, x_0)<w_i(x, t)\leq C_1(\ee, K, \cS_0, x_0),\quad \forall\, (x, t)\in K\times [t_{\ee}+1, T-1]. \label{eq:G001}\eeq
Moreover, at time $t=t_{\ee}+1$ there exists $C_3=C_3(\ee,  \cS_0,  x_0)>0$ independent of $T$ such that
\beq
w_i(x, t_{\ee}+1)\geq C_3(\ee,  \cS_0,  x_0)>0,\quad \forall\;x\in K.  \label{eq:G100}
\eeq

\end{lem}
\begin{proof}We divide the proof into several steps.\\

{\it Step 1.  $w_i(x, 0)$ is bounded from above on $K$.}
By the definition of $\ee'$ in (\ref{lma:PE14_2}), all points of $\Om_{\frac {\ee}8}(0)$ are regular for any $t\in [-2\ee', 2\ee']$. Thus, the equation (\ref{eq:F007}) of $w_i$  can be written as
\beq
\pd {w_i}t=\td a^{pq}w_{i, pq}+\td b^pw_{i, p}+\frac 12 w_i,\quad \forall \;(x, t)\in \Om_{\frac {\ee}8}(0)\times (-2\ee', 2\ee'),\label{eq:Z004}
\eeq where $\td a^{pq}=\dd^{pq}-a^{pq}$ and $\td b^p=-\frac 12 x^p-b^p. $ Here $\dd^{pq}=0$ if $p\neq q$ and $\dd^{pq}=1$ if $p=q.$ Since the sequence $\{\Si_{i, t}, -T< t< T\}$ converges locally smoothly to the plane $P$ away from the singular set $\cS,$ the coefficients $a^{pq}$ and $b^p$ tend to zero on $\Om_{\frac {\ee}8}(0)\times (-2\ee', 2\ee')$. Thus, $\td a^{pq}$ and $\td b^p$ satisfy the conditions (\ref{eqB:001})-(\ref{eqB:002}) in Appendix A.
We set
\begin{align}
  \Om :=\Om_{\frac {\ee}8}(0), \quad  \Om' :=\Om_{\frac {\ee}5}(0), \quad \Om'' :=\Om_{\frac {\ee}6}(0). \label{eqn:PE14_7}
\end{align}
It follows from the definition (\ref{eqn:PE09_2}) that we have the relationship
\begin{align}
  \Omega' \Subset \Omega'' \Subset \Omega.  \label{eqn:PE14_8}
\end{align} Since  $\overline {\Om'}$ is compact, we can cover $\Om'$ by finite many balls contained in $\Om''$ with radius $r=\frac {\ee}{100}$ and the number of these balls is bounded by a constant  depending only on $\ee$. Moreover, $\Om''$ has a positive distance $\dd=\dd(\ee)>0$ from the boundary of $\Om.$
Thus, applying Theorem \ref{theo:B3} for such $\Om, \Om', \Om''$ and the time interval $(-2\ee', 2\ee')$, we have
\beq
w_i(x, 0)\leq C(\ee, \ee', x_0)\,w_i(x_0, \ee')=C(\ee, x_0, \cS_0),\quad \forall\; x\in \Om'. \label{eq:Z001}
\eeq  Here  $C(\ee, x_0, \cS_0)$ means that $C$ depends only on $\ee, x_0, \cS_0.$ \\

{\it Step 2.   $w_i(x, t)$ is bounded from above  on $ K\times [t_{\ee}+1, T-1].$}
Integrating both sides of (\ref{eq:Z001}) on $\Om'$, we have
\beq
\int_{\Om_{\frac {\ee}5}(0)}\,w_i(x, 0)\leq C(\ee, x_0, \cS_0). \label{eq:Z002}
\eeq
By Lemma \ref{lma:GC05_3} and Lemma \ref{lem:equi}, for large $t_i$ and any $t\in  [0, T)\subset [0, i]$ we have
\beqn
\int_{\Om_{\ee}(t)}\, u_i(x, t)\,d\mu_{\infty}&\leq& |\mathbf{TN}(\epsilon, \Si_{t_i+ t})| \leq 2 |\mathbf{TN}(\epsilon, \Si_{t_i})|\nonumber\\
&\leq& 2\int_{\Om_{\frac {\ee}5}(0)}\, u_i(x, 0) \,d\mu_{\infty}.\label{eq:J005}
\eeqn
Combining (\ref{eq:J005}) with the definition (\ref{eq:F001}) of $w_i$, for large $t_i$ and any $t\in [0, T)$ we have the following inequality \beq
\int_{\Om_{{\ee}}(t)}\, w_i(x, t) \,d\mu_{\infty}\leq 2\int_{\Om_{\frac {\ee}5}(0)}\, w_i(x, 0) \,d\mu_{\infty}\leq C(\ee,  x_0, \cS_0), \label{eq:K001}
\eeq where we used (\ref{eq:Z002}) in the last inequality.  Note that by Lemma \ref{lem:t}, for $t\in [t_{\ee}, T)$ we have $\Om_{\ee}(t)=B_{\epsilon^{-1}}(0)\cap \Si_{\infty}$ if $0\notin \cS_0$ and $\Om_{\ee}(t)=\Ann(\ee, \ee^{-1})$ if $0\in \cS_0.$
Let
\beq
\Om:=\Ann(\ee, \ee^{-1}),\quad \Om':=\Ann(2\ee, \frac 12 \ee^{-1}),\quad \Om'':=\Ann(\frac 32\ee,  \frac 23\ee^{-1}).\label{eqC:002}
\eeq
Then we have $ \Om'\Subset \Om''\Subset \Om$. Moreover, $\Om'$ can be covered by finite many balls contained in $\Om''$ with radius $r=\frac {\ee}{100}$ and the number of these balls is bounded by a constant  depending only on $\ee$.
 For any $K\subset\subset \Om'$,
applying Theorem \ref{theo:B3} for such $\Om, \Om', \Om''$ and the interval $[t_{\ee}, T-1]$ as in {\it Step 1}, we have
\beq
w_i(x, t)\leq C( \ee)w_i(y, t+\frac 12),\quad \forall\; x, y\in K, \;t\in [t_{\ee}+1, T-1].\label{eq:Z003}
\eeq
Integrating both sides of (\ref{eq:Z003}) for $y\in K$,  we have
\beqn
w_i(x, t)&\leq& C( \ee, K)\int_K\;w_i(y, t+\frac 12)\,d\mu_{\infty}(y)\nonumber\\
&\leq &C(\ee, K)\int_{\Om_{\ee}(t+\frac 12)}\;w_i(y, t+\frac 12)\,d\mu_{\infty}(y)\nonumber\\
&\leq& C(\ee,  x_0, \cS_0, K),\quad \forall\; (x, t)\in K\times [t_{\ee}+1, T-1],
\eeqn where we used (\ref{eq:K001}) and the fact that
$$K\subset\subset \Ann(2\ee, \frac 12 \ee^{-1})\subset \Ann(\ee, \ee^{-1})\subset \Om_{\ee}(t),\quad \forall\; t\in [t_{\ee}, T).$$
Therefore, $w_i(x, t)$ is bounded from above.\\

{\it Step 3. $w_i(x_0, t_{\ee}+\frac 12)$ is bounded from below by a constant independent of $i$.} By (\ref{eq:Y001}) and (\ref{eqn:PE09_4}),  we have $(B_{\ee}(x_0)\cap \Si_{\infty})\subset \Om_{\ee}((-T, T)) $.
Therefore, applying Theorem \ref{theo:B4} for $(B_{\ee}(x_0)\cap \Si_{\infty})\times [0, t_{\ee}+1]$ we have
\beq
w_i(x_0, t_{\ee}+\frac 12)\geq C(\ee, x_0, \cS_0) w_i(x_0, \ee')=C(\ee, x_0, \cS_0), \label{eq:X004}
\eeq where we used $w_i(x_0, \ee')=1$. Thus, $w_i(x_0, t_{\ee}+\frac 12)$ is bounded from below.
\\

{\it Step 4. $w_i(x, t)$ has  a positive lower bound on $K\times [t_{\ee}+1, T-1]$.}
Note that by Lemma \ref{lma:PE14_1} we have  $x_0\in \Ann(2\ee, \frac 12 \ee^{-1})$.  We set
$\Om, \Om'$ and $\Om''$ as in (\ref{eqC:002}) and we have $K\cup\{x_0\}\subset\Om'$.
  Thus, we apply Theorem \ref{theo:B3} for such $\Om, \Om'$ and the interval $[t_{\ee}, T-1]$ to get
\beq
w_i(x, t)\geq C(\ee, x_0, t-(t_{\ee}+\frac 12), \cS_0)w_i(x_0, t_{\ee}+\frac 12),\quad \forall\; (x, t)\in K\times [t_{\ee}+1, T-1]. \label{eq:X001}
\eeq  Combining (\ref{eq:X001}) with (\ref{eq:X004}), we have
\beq
w_i(x, t)\geq C(\ee, x_0, T, \cS_0),\quad  \forall\; (x, t)\in K\times [t_{\ee}+1, T-1].
\eeq In particular, if we take $t=t_{\ee}+1$ in (\ref{eq:X001}), then we have
\beq
w_i(x, t_{\ee}+1)\geq C(\ee, x_0,  \cS_0) w_i(x_0, t_{\ee}+\frac 12)\geq C(\ee, x_0,  \cS_0),\quad \forall\;x\in K. \label{eq:X002}
\eeq
Note that the constant $C$ in (\ref{eq:X002}) doesn't depend on $T$. The lemma is proved.

\end{proof}

\begin{lem}\label{lem:convergence} The same conditions as in Lemma \ref{lem:estimates}.  As $t_i\ri +\infty$, we can take a subsequence of the functions  $w_i(x, t)$ such that it converges in $C^2$ topology on $K\times [t_{\ee}+1, T-1]$, where  $K$ is any compact set satisfying $K\subset\subset \Ann(2\ee, \frac 12\ee^{-1})$,  to a positive function $w(x, t)$ satisfying
\beq \pd wt=Lw:=\Delta_0 w-\frac 12\langle x, \Na w\rangle+\frac 12 w,\quad \forall\;(x, t)\in K \times [t_{\ee}+1, T-1]. \label{eq:D004}\eeq
Moreover, there exist constants $C_1(\ee, K, \cS_0, x_0), C_2(\ee,  \cS_0, T, x_0)>0$ such that
\beq
C_2(\ee,  \cS_0, T, x_0)\leq w(x, t)\leq C_1(\ee, K, \cS_0, x_0),\quad \forall\; (x, t)\in K\times [t_{\ee}+1, T-1],\label{eq:K002}
\eeq
and  at time $t=t_{\ee}+1$ there exists $C_3=C_3(\ee,  \cS_0,  x_0)>0$ independent of $T$ such that
\beq
w(x, t_{\ee}+1)\geq C_3(\ee,  \cS_0,   x_0)>0,\quad \forall\;x\in K.  \label{eq:K003}
\eeq
\end{lem}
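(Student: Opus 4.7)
The plan is to promote the $L^{\infty}$ bounds of Lemma \ref{lem:estimates} to higher-order estimates through standard parabolic regularity, extract a convergent subsequence via Arzelà--Ascoli, and then pass to the limit in the equation (\ref{eq:F007}).

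First, I would fix a compact set $K \subset\subset (\Si_\infty \cap B_{\ee^{-1}}(0))\setminus \cup_{t\in [a,b+\frac12]}\cS_t$ and a slightly larger compact set $K'$ with $K\subset\subset K'\subset\subset (\Si_\infty \cap B_{\ee^{-1}}(0))\setminus \cup_{t\in [a,b+\frac12]}\cS_t$. On $K'\times [a-\eta,b+\frac12]$ for small $\eta>0$, the equation (\ref{eq:F007}) is uniformly parabolic: because of the smooth convergence $\Si_{i,t}\to\Si_\infty$ away from $\cS$ shown in Lemma \ref{lem:A001}, the gradients $\Na f_{i,s}$ defining $a^{pq}$ and $b^p$ in (\ref{eq:F005})--(\ref{eq:F006}) tend to zero uniformly on $K'\times [a-\eta,b+\frac12]$, so $\delta^{pq}-a^{pq}$ is close to the identity and $|b^p|\to 0$. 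Combined with the uniform two-sided bounds (\ref{eq:G001}) on $w_i$, classical interior parabolic Schauder estimates (e.g.~Lieberman~\cite{[Lieb]}) yield uniform $C^{2+\alpha,1+\alpha/2}$ bounds for $w_i$ on $K\times [a,b]$, after possibly shrinking $K'$ to $K$ and the time interval accordingly.

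Next, by a diagonal Arzelà--Ascoli argument over an exhaustion of $(\Si_\infty\cap B_{\ee^{-1}}(0))\setminus\cup_{t\in[a,b+\frac12]}\cS_t$ by compact sets, I extract a subsequence of $w_i$ converging in $C^2$ topology on every such compact set to a limit function $w(x,t)$. Passing to the limit termwise in (\ref{eq:F007}): the coefficients $a^{pq}, b^p$ converge to zero uniformly on $K$, while $w_{i,pq}$ and $w_{i,p}$ are uniformly bounded there by the Schauder estimates, so the terms $a^{pq}w_{i,pq}$ and $b^p w_{i,p}$ vanish in the limit. The remaining terms converge by $C^2$ convergence, so $w$ satisfies (\ref{eq:D004}).

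Finally, the bounds (\ref{eq:K002}) and (\ref{eq:K003}) follow directly by passing to the limit in (\ref{eq:G001}) and (\ref{eq:G100}) from Lemma \ref{lem:estimates}; in particular $w>0$ on $K\times [a,b]$, so the limit is a positive solution to the operator $L$ of Colding--Minicozzi. The main technical point to verify carefully is the uniform parabolicity of (\ref{eq:F007}) near the singular set, which is why I confine the argument to compacta $K$ bounded away from $\cup_{t\in[a,b+\frac12]}\cS_t$ and rely crucially on the smooth convergence established in Lemma \ref{lem:A001}; no obstacle arises beyond this, since once uniform parabolicity and $L^\infty$ bounds are in place, standard parabolic theory supplies everything.
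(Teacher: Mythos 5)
Your proposal is correct and follows essentially the same route as the paper: use the two-sided $L^\infty$ bounds from Lemma~\ref{lem:estimates}, upgrade to higher regularity via interior parabolic estimates, apply Arzel\`a--Ascoli, and pass to the limit in the equation noting that $a^{pq}, b^p \to 0$. The only cosmetic difference is that the paper first invokes a De Giorgi--Nash--Moser type interior H\"older estimate (requiring only bounded measurable coefficients) before bootstrapping to $C^{2,\alpha}$ via Schauder, whereas you invoke Schauder directly; this is also fine here because the coefficients $a^{pq}, b^p$ inherit uniform smooth bounds on $K'\times[a-\eta,b+\tfrac12]$ from the smooth convergence of $\Sigma_{i,t}$ away from $\mathcal{S}$.
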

\begin{proof}
Let $I=[t_{\ee}+1, T-1]$.
By Lemma \ref{lem:estimates}, the function $w_i(x, t)$ is  bounded from above and below by some constants independent of $i$. Since the function $w_i$ satisfies the equation (\ref{eq:Z004}) on any $K\times I$,
we can use the interior H\"older estimate (c.f. Theorem 4.9  of \cite{[Lieb]}, or Theorem \ref{theo:C1} in Appendix B) to get the    local space-time $C^{2, \al}$ estimates,
which implies that a subsequence converges uniformly in $C^2$  to a positive function $w(x, t)$ on $(x, t)
\in K\times I$ satisfying (\ref{eq:D004})-(\ref{eq:K003}). The lemma is proved.
\end{proof}

Now we are able to finish the proof of Proposition~\ref{prn:PE14_1}.

\begin{proof}[Proof of Proposition~\ref{prn:PE14_1}]
Note that by Property (4) in the beginning of this subsection, $P$ and $\cS_0$ are independent of $T$, and by Lemma \ref{lem:t} we see that
 $t_{\epsilon}$ is also independent of $T$. Moreover, (\ref{eq:K002}) implies that the space-times $C^{2, \al}$ norm of $w$ is bounded by a constant
 independent of $T$ on $K\times [t_{\ee}+1, T-1]$ (c.f. Theorem 4.9  of \cite{[Lieb]}, or Theorem \ref{theo:C1} in Appendix B).
  Thus,  we can take $T \to \infty$ in Lemma \ref{lem:convergence} and obtain a function, still denoted by $w$,  defined on $\Ann(3\epsilon, \frac{1}{3} \epsilon^{-1}) \times [t_{\epsilon}+1, \infty)$
satisfying (\ref{eq:D004}), (\ref{eq:K002}) and (\ref{eq:K003}).
Replacing $\epsilon$ by $\frac{1}{3}\epsilon$ and $t$ by $t-t_{\frac{\epsilon}{3}}-1$, we know that the function $w$ is defined on
$\Ann(\epsilon, \epsilon^{-1}) \times [0, \infty)$ and  satisfies (\ref{eqn:PE14_2}), (\ref{eqn:PE14_3}) and (\ref{eqn:PE14_15}). The proposition is proved.
\end{proof}

\subsection{Proof of  the multiplicity-one convergence}


In this subsection, we shall show Theorem~\ref{theo:removable}, i.e., the limit plane is multiplicity one, under the special condition $H$ decays exponentially fast.
Our argument largely follows that of Colding-Minicozzi~\cite{[CM1]}.   However, some new technical difficulties need to be addressed.   For example, the proof
of the limit surface to be $L$-stable(c.f. Lemma~\ref{lma:GB24_2}) is technically different from the classical one.  Due to the loss of general self-shinker equation,
we need to delicately choose subsequence and apply parabolic version of maximum principle.
The final contradiction relies heavily on the existence of the auxiliary function $w$ constructed in Proposition~\ref{prn:PE14_1}.


\begin{lem}   \label{lma:GB24_2}
 Let $P$ be the horizontal plane defined in (\ref{eqn:PE14_1}).
 For any  function $\phi\in W_{c}^{1, 2}(P\backslash\{0\})$,we have
\begin{align}
 -\int_{P}\,(\phi L\phi)e^{-\frac {|x|^2}{4}} \geq 0.
  \label{eqn:GB24_6}
\end{align}
Here $W_{c}^{1, 2}(P\backslash\{0\})$ denotes the set of all functions $\phi\in W^{1, 2}(P)$ with compact support in
$P\backslash \{0\},$  and $L$ is the operator defined by (\ref{eq:D004}) on the plane $P$.

\end{lem}
\begin{proof} Given a function $\phi\in W_{c}^{1, 2}(P\backslash\{0\})$, there exists $\ee>0$ such that
\begin{align*}
\overline{\Supp (\phi)}\subset \Ann(2\epsilon, \frac{1}{2}\epsilon^{-1}).
\end{align*}
For such $\ee$, we choose $\{t_i\}$ as in Lemma \ref{lma:GC05_3} and we denote by $\Si_{\infty}$ the limit plane of the sequence $\{\Si_{t_i}\}$.
Up to rotation in $\RR^3$, we may assume $\Sigma_{\infty}$ to be $P$.
In light of Proposition~\ref{prn:PE14_1}, we obtain a positive function $w$ defined on $\Ann(\epsilon, \epsilon^{-1}) \times [0, \infty)$ satisfying (\ref{eqn:PE14_2}), (\ref{eqn:PE14_3}) and (\ref{eqn:PE14_15}).
Let $v:=\log w$.  It follows from (\ref{eqn:PE14_2}) that $v$ satisfies the equation
$$\pd vt=\Delta_0 v+\frac 12-\frac 12\langle x, \Na v\rangle+|\Na v|^2,\quad \forall\; (x, t)\in  \Ann(\epsilon, \epsilon^{-1}) \times [0, \infty).$$
Then integration by parts implies that
\beqs
0&=&\int_{P}\,\div\Big(\phi^2e^{-\frac {|x|^2}{4}}\Na v\Big)\\
&=&\int_{P}\,\Big(2\phi\langle\Na \phi, \Na v\rangle+\Big(\pd vt-\frac 12-|\Na v|^2\Big)\phi^2\Big)e^{-\frac {|x|^2}{4}}\\
&\leq&\int_{P}\,\Big(|\Na \phi|^2-\frac 12\phi^2+\pd vt\phi^2\Big)
e^{-\frac {|x|^2}{4}}.
\eeqs
The above inequality can be rewritten as
\beqn
-\int_{P}\,(\phi L\phi)e^{-\frac {|x|^2}{4}}&=& \int_{P}\,\Big(|\Na \phi|^2-\frac 12\phi^2\Big)
e^{-\frac {|x|^2}{4}}\nonumber\\
&\geq&
 -\int_{P}\,\pd vt\phi^2 e^{-\frac {|x|^2}{4}}=-\frac {d}{dt}\int_{P}\, v\phi^2 e^{-\frac {|x|^2}{4}}.\label{eq:B003}
\eeqn
Fix $T>0$. Integrating both sides of (\ref{eq:B003}), we have
\beqn -\int_{0}^{T}\,dt\int_{P}\,(\phi L\phi)e^{-\frac {|x|^2}{4}}&\geq&
\int_{P}\,v\phi^2 e^{-\frac {|x|^2}{4}}\Big|_{t=0}-\int_{P}\,v\phi^2 e^{-\frac {|x|^2}{4}}\Big|_{t=T}\nonumber\\&\geq &-C,\label{eq:Z006}
\eeqn where $C$ is independent of $T$ by  (\ref{eqn:PE14_3}) and (\ref{eqn:PE14_15}) in Proposition \ref{prn:PE14_1}. Thus, we have
\beq -\int_{P}\,(\phi L\phi)e^{-\frac {|x|^2}{4}}\geq -\frac C{T}.\label{eq:G003}\eeq
Letting $T\ri +\infty$ in (\ref{eq:G003}) we obtain (\ref{eqn:GB24_6}).
The proof of Lemma~\ref{lma:GB24_2} is complete.
\end{proof}

Following the argument of Gulliver-Lawson \cite{[GL]}, we show that the plane is $L$-stable across the singular set $\cS.$
In other words, we have the following Lemma.

\begin{lem}\label{lem:Z} The same assumption as in Lemma \ref{lma:GB24_2}. For any   smooth function $\phi$ with compact support in the plane
$P,$  we have
\begin{align}
 -\int_{P}\,(\phi L\phi)e^{-\frac {|x|^2}{4}} \geq 0.
  \label{eq:G004}
\end{align}

\end{lem}

\begin{proof}
Choose $0<\dd<R<1$ and define  the function
$$ \eta(x)=\left\{
     \begin{array}{ll}
       \frac {\log R}{\log |x|}, & \quad 0<|x|<R, \\
       1, &  \quad  |x|\geq R
     \end{array}
   \right.
$$
and the function $\bb(x)=\bb(|x|)$ such that $\bb(x)=0$ for $|x|<\frac {\dd}2$, $\bb(x)=1$ for $|x|\geq \dd$, $0\leq \bb(x)\leq 1$ and $|\Na \bb|\leq \frac 3{\dd}$.  Suppose that $\phi$ is any function with compact support on $P$.
Moreover, we can check that
\beq
\int_{P}\,|\Na \eta|^2e^{-\frac {|x|^2}{4}}\leq \frac C{|\log R|}, \quad \hbox{and}
\quad \int_{P}\, |\Na \bb|^2e^{-\frac {|x|^2}{4}}\leq C,
\eeq where $C$ are universal constants.
Note that  $\eta \bb \phi$ has  compact support in $P\b \{0\}$.
Using the inequality
$$(a+b+c)^2\leq 2\Big(1+\frac 1{\tau}\Big)(a^2+b^2)+(1+\tau)c^2,\quad \forall\; \tau>0,$$
we have
\beq
|\Na(\eta \bb \phi)|^2\leq 2\Big(1+\frac 1{\tau}\Big)\phi^2\Big(\bb^2|\Na \eta|^2+\eta^2|\Na \bb|^2\Big)+(1+\tau)\eta^2\bb^2|\Na \phi|^2. \label{eq:E008}
\eeq
We normalize $\phi$ such that $|\phi|\leq 1.$ Note that $\eta \bb\phi\in W_{c}^{1, 2}(P\b \{0\})$.
Combining (\ref{eq:E008}) with Lemma \ref{lma:GB24_2}, we have
\beqs
0&\leq & -\int_{P}\,\Big(\eta \bb \phi \cdot L(\eta \bb \phi)\Big)e^{-\frac {|x|^2}{4}}\\
&=&\int_{P}\,\Big(|\Na (\eta \bb \phi)|^2-\frac 12 (\eta \bb \phi)^2\Big)e^{-\frac {|x|^2}{4}}\\
&\leq&(1+\tau)\int_{P} \Big(|\Na \phi|^2-\frac 12 \phi^2\Big)e^{-\frac {|x|^2}{4}}+C\Big(1+\frac 1{\tau}\Big)\frac 1{|\log R|}+C\Big(\frac {\log R}{\log \dd}\Big)^2\\
&&+\frac 12\int_{P}\,\Big(1+\tau-(\eta\bb)^2\Big)\phi^2\,e^{-\frac {|x|^2}{4}},
\eeqs where $C$ are universal constants. Taking $\dd\ri 0$ and then $R\ri 0$, we have \beq
0\leq (1+\tau)\int_{P} \Big(|\Na \phi|^2-\frac 12 \phi^2\Big)e^{-\frac {|x|^2}{4}}+\frac {\tau}{2}\int_{P}\,\phi^2\,e^{-\frac {|x|^2}{4}}. \nonumber
\eeq Letting $\tau\ri 0$, we get the inequality
\beq \int_{P} \Big(|\Na \phi|^2-\frac 12 \phi^2\Big)e^{-\frac {|x|^2}{4}}\geq 0. \label{eq:E005}\eeq
The lemma is proved.

\end{proof}

 The rest of the argument is the same as that in Colding-Minicozzi \cite{[CM1]}. Let $\rho: \RR\ri [0, 1]$ be a smooth cutoff function with
$\rho(s)=1$ for $s\leq R$, $\rho=0$ for $s\geq R+1 $, $0\leq \rho(s)\leq 1$ and $|\rho'(s)|\leq 2$ for $s\in \RR.$
Let $\phi(x)=\rho(|x|)$ for any $x\in P$, where $P$ is defined in (\ref{eqn:PE14_1}). Then
 Lemma \ref{lem:Z} implies that
\beqn
0&\leq&   -\int_{P}\,(\phi L\phi)e^{-\frac {|x|^2}{4}}=\int_{P} \Big(|\Na \phi|^2-\frac 12 \phi^2\Big)e^{-\frac {|x|^2}{4}}\nonumber\\&\leq&
\int_P\;\Big((\rho'(|x|))^2-\frac 12 \rho(|x|)^2\Big)\,e^{-\frac {|x|^2}{4}}\nonumber\\
&\leq& 4\int_{P\backslash B_R(0)}\, e^{-\frac {|x|^2}{4}}-\frac 12
\int_{B_R(0)\cap P}\,  e^{-\frac {|x|^2}{4}}.\label{eq:E006}
\eeqn
Note that the right hand side of (\ref{eq:E006}) is negative when $R$ is sufficiently large. Therefore, we get a contradiction.
 Thus, for the sequence of times $\{t_i\}$ in Lemma \ref{lma:GC05_3}, the multiplicity of the convergence of $\{(\Si, \x(t_i+t)), -T<t<T\}$ is one and the convergence
is smooth everywhere. Theorem \ref{theo:removable} is proved.

\section{Proof of the extension theorem}


%


For the convenience of readers, we copy down the statement of our main extension theorem, i.e.,  Theorem~\ref{theo:main1} as follows.

\begin{theo}
If
 $\x(p, t): \Si^2\ri \RR^3 (t\in [0, T))$ is a closed smooth embedded mean curvature flow
with the first  singular time $T<+\infty$, then
$\displaystyle \sup_{\Si\times [0, T)}|H|(p, t)=+\infty$.
\end{theo}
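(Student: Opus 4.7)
The plan is to argue by contradiction. Suppose the mean curvature stays uniformly bounded, $|H|\leq \Lambda<+\infty$ on $\Sigma\times [0,T)$. Since $T$ is the first singular time, there exists at least one singular spacetime point $(p_0,T)$. Perform the standard parabolic rescaling $\tilde{\x}(p,s)=(T-t)^{-1/2}(\x(p,t)-p_0)$ with $s=-\log(T-t)$. This converts the flow (\ref{eq:MCF0}) into a solution of the rescaled mean curvature flow (\ref{eq:RMCF0}) on $[s_0,+\infty)$, and the scaling identity $\tilde{H}=\sqrt{T-t}\,H$ immediately gives the exponential decay $|\tilde{H}|(\cdot,s)\leq \Lambda_0 e^{-s/2}$.

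To invoke Theorem \ref{theo:removable} I also need a uniform distance bound $d(\tilde{\Sigma}_s,0)\leq D$. Since $|\partial_t\x|=|H|\leq \Lambda$, each trajectory $t\mapsto \x(p,t)$ is $\Lambda$-Lipschitz, so $\Sigma_t$ stays in a fixed bounded region, and after rescaling one obtains an upper bound on the distance from the origin using the area upper bound of Lemma \ref{lem:vol}. A matching lower bound on the ``nearest point'' of $\tilde{\Sigma}_s$ comes from the positive density at the singular point $p_0$ enforced by Huisken's monotonicity formula (together with the area ratio bound of Lemma \ref{lem:vol3}): the surface must intersect every ball of scale $\sqrt{T-t}$ around $p_0$, which translates to a uniform Euclidean bound after rescaling.

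With the two hypotheses verified, Theorem \ref{theo:removable} yields a sequence of times $s_j\to +\infty$ along which $\tilde{\Sigma}_{s_j}$ converges smoothly, with multiplicity one, to a plane $P$ through the origin. Huisken's monotonicity formula identifies the Gaussian density of the original flow at $(p_0,T)$ with the asymptotic Gaussian area of $\tilde{\Sigma}_s$, which, by the multiplicity-one smooth convergence to a plane through $0$, equals exactly
\[
\int_P \frac{1}{4\pi} e^{-|y|^2/4}\,d\mathcal{H}^2 = 1.
\]

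Finally I invoke White's local regularity theorem: any spacetime point of a mean curvature flow with Gaussian density equal to $1$ is a regular point, with uniform $|A|$-bounds in a parabolic neighborhood. Hence $(p_0,T)$ is in fact a regular point, contradicting the choice of $p_0$. Since $p_0$ was an arbitrary singular point at time $T$, this forces the flow to extend smoothly past $T$, contradicting the first-singular-time assumption. The main technical obstacle, namely the multiplicity-one smooth convergence of the rescaled flow, has already been overcome in Theorem \ref{theo:removable}; the remaining work in this final step is essentially bookkeeping to verify the bounded-distance hypothesis and to combine Huisken's monotonicity with White's $\varepsilon$-regularity.
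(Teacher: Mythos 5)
Your proposal follows essentially the same route as the paper: argue by contradiction, rescale to obtain the normalized flow (\ref{eq:RMCF0}) with the exponential decay $|\tilde H|\leq\Lambda_0 e^{-s/2}$, invoke Theorem~\ref{theo:removable} to get multiplicity-one smooth convergence to a plane, compute the Gaussian density via Huisken's monotonicity formula, and conclude with White's local regularity theorem. The one place where your writeup is murkier than the paper's is the verification of the hypothesis $d(\tilde\Sigma_s,0)\leq D$: the paper dispatches this in one line by citing Corollary~3.6 of Ecker's book, which gives $d(\Sigma_t,x_0)\leq 2\sqrt{T-t}$ directly from the existence of a singular point at $(x_0,T)$. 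Your first sub-argument (bounded ambient region via the $\Lambda$-Lipschitz trajectories plus Lemma~\ref{lem:vol}) does not by itself give the needed $O(\sqrt{T-t})$ closeness of $\Sigma_t$ to $x_0$ — a uniform bound on $|\x|$ is not preserved after the $e^{s/2}$ dilation — so that sentence is a red herring. Your second sub-argument, that the monotonicity formula forces the surface to meet parabolic balls about $p_0$, is the right idea and is essentially what Ecker's corollary encodes; tightening the phrasing (you want an \emph{upper} bound on $d(\tilde\Sigma_s,0)$, not a ``lower bound on the nearest point'') would make this cleaner, but the substance is correct.
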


\begin{proof}
 Suppose not, we can find $(x_0, T)$ such that the mean curvature flow $\x(p, t)$ blows up at $x_0\in \RR^3$ at time $T$ with
\beq \La_0:= \sup_{\Si\times [0, T)}|H|(p, t)<+\infty. \label{eq:A106} \eeq
Then Corollary 3.6 of \cite{[Eckbook]} implies that for all $t<T$ we have
\beq
d(\Si_t, x_0)\leq 2\sqrt{T-t}, \label{eq:I005}
\eeq  where $d(\Si_t, x_0)$ denotes the Euclidean distance from the point $x_0$ to the surface $\Si_t$.  We can rescale the flow $\Si_t$ by
$$s=-\log(T-t),\quad   \td \Si_s=e^{\frac s2}\Big(\Si_{T-e^{-s}}-x_0\Big) $$
such that the flow $\{(\td\Si_s, \td \x(p, s)), -\log T\leq s< +\infty\}$ satisfies the following properties:
\begin{enumerate}
  \item[(1)]  $\td \x(p, s)$ satisfies the equation
  \beq \Big(\pd {\td \x}s\Big)^{\perp}=-\Big(\td H-\frac 12 \langle \td \x, \n\rangle\Big)\n; \label{eq:A001}\eeq
  \item[(2)] the mean curvature of $\td \Si_s$ satisfies $|\td H(p, s)|\leq \La_0 e^{-\frac s2}$ for some $\La_0>0$;
  \item[(3)] $d(\td \Si_s, 0)\leq 2$.
\end{enumerate}
By Theorem \ref{theo:removable}, there exists a sequence of times $s_i\ri +\infty$ such that  $\td \Si_{s_i}$ converges smoothly to a plane passing through the origin with multiplicity one. Consider the heat-kernel-type function
$$\Phi_{(x_0, T)}(x, t)=\frac 1{4\pi(T-t)} e^{-\frac {|x-x_0|^2}{4(T-t)}},\quad
\forall\; (x, t)\in \Si_t\times [0, T).$$
Huisken's monotonicity formula(c.f. Theorem 3.1 in \cite{[Hui2]}) implies that
\beqs
\Te(\Si_t,  x_0, T):=\lim_{t\ri T}\,\int_{\Si_t}\, \Phi_{(x_0, T)}(x, t)\,d\mu_t
= \lim_{s_i\ri +\infty} \frac 1{4\pi}\int_{\td \Si_{s_i}}\, e^{-\frac {| x|^2}{4}}\,d\td \mu_{s_i}=1,
\eeqs
which implies that $(x_0, T)$ is a regular point by Theorem 3.1 of B. White \cite{[White2]}.
Thus, the unnormalized mean curvature flow $\{(\Si, \x(t)), 0\leq t<T\}$ cannot blow up at $(x_0, T)$, which contradicts our assumption.
The theorem is proved.

\end{proof}

\begin{appendices}

\section{The parabolic Harnack inequality}
In this appendix, we include the parabolic Harnack inequality from Krylov-Safonov \cite{[Kry]}.
First, we introduce some notations. Let  $x=(x^1, x^2, \cdots, x^n)\in \RR^n$. Denote
\beqs |x|&=&\Big(\sum_{i=1}^n(x^i)^2\Big)^{\frac 12},\quad B_R(x)=\{y\in \RR^n\;|\; |x-y|<R\},\\
Q(\te, R)&=&B_R(0)\times (0, \te R^2).\eeqs
Consider the parabolic operator
\beq
L u=-\pd ut+a^{ij}(x, t)u_{ij}+b^i(x, t)u_i-c(x, t)u, \label{eqB:104}
\eeq where the coefficients are measurable and satisfy the conditions
\beqn
\mu |\xi|^2&\leq &a^{ij}(x, t)\xi_i\xi_j\leq \frac 1{\mu}|\xi|^2,\label{eqB:001}\\
|b(x, t)|&\leq &\frac 1{\mu},\label{eqB:002}\\
0&\leq &c(x, t)\leq \frac 1{\mu}.\label{eqB:003}
\eeqn Here $b(x, t)=(b^1(x, t), \cdots, b^n(x, t)). $
Then we have

\begin{theo}\label{theo:B1} (Theorem 1.1 of \cite{[Kry]}) Suppose the operator $L$ in (\ref{eqB:104}) satisfies the conditions (\ref{eqB:001})-(\ref{eqB:003}).
 Let $\te>1, R\leq 2, u\in W^{1, 2}_{n+1}(Q(\te, R)), u\geq 0$ in $Q(\te, R)$, and $Lu=0$ on $Q(\te, R)$. Then there exists a constant $C$, depending only on $\te, \mu$ and $n$, such that
\beq
u(0, R^2)\leq C\, u(x, \te R^2),\quad \forall\; x\in B_{\frac R2}(0).
\eeq Moreover, when $\frac 1{\te-1}$ and $\frac 1{\mu}$ vary within finite
bounds, $C$ also varies within finite bounds.
\end{theo}

Note that in our case the equation (\ref{eq:F007}) doesn't satisfy the assumption that $c(x, t)\geq 0$ in (\ref{eqB:003}). Therefore, we cannot use Theorem \ref{theo:B1} directly.
However, the Harnack inequality still works when $c(x, t)$ is a constant. Namely, we have

\begin{theo}\label{theo:B2}Let $\te>1, R\leq 2$. Suppose that $u(x, t)\in W^{1, 2}_{n+1}(Q(\te, R))$ is  a nonnegative solution to the equation
\beq
L u=-\pd ut+a^{ij}(x, t)u_{ij}+b^i(x, t)u_i+c u=0,  \label{eqB:004}
\eeq where $c$ is a constant and the coefficients satisfy (\ref{eqB:001})-(\ref{eqB:002}). Then there exists a constant $C$, depending only on $\te, \mu, c$ and $n$, such that
\beq
u(0, R^2)\leq C\, u(x, \te R^2),\quad \forall\; |x|<\frac 12 R.
\eeq
\end{theo}
\begin{proof}Since $u(x, t)$ is a solution of (\ref{eqB:004}) and $c$ is a constant, the function $v(x, t)=e^{-ct}u$ satisfies  \beq
-\pd vt+a^{ij}(x, t)v_{ij}+b^i(x, t)v_i=0. \label{eq:B100}
\eeq
Applying Theorem \ref{theo:B1} to the equation (\ref{eq:B100}), we have $$v(0, R^2)\leq C\, v(x, \te R^2),\quad \forall\; |x|<\frac 12 R, $$
where $C$ depends only on $\te, \mu$ and $n$. Thus, for any $x\in B_{\frac R2}(0)$ we have
$$u(0, R^2)\leq Ce^{-c(\te-1)R^2}u(x, \te R^2)\leq C'u(x, \te R^2),$$
where $C'$ depends only on $\te, \mu, c$ and $n$. Here we used $R\leq 2$  by the assumption. The theorem is proved.

\end{proof}

We generalize Theorem \ref{theo:B2} to a general bounded domain in $\RR^n.$

\begin{theo}\label{theo:B4} Let $\Om$ be a bounded domain in $\RR^n$. Suppose that $u(x, t)\in W^{1, 2}_{n+1}(\Om\times (0, T))$ is  a nonnegative solution to the equation
\beq
L u=-\pd ut+a^{ij}(x, t)u_{ij}+b^i(x, t)u_i+c u=0,  \label{eqB:0041}
\eeq where $c$ is a constant and the coefficients satisfy (\ref{eqB:001})-(\ref{eqB:002}) for a constant $\mu>0$. For any $s, t$ satisfying $0<s<t<T$ and any $x, y\in \Om$ with the following properties
\begin{enumerate}
  \item[(1).] $x$ and $y$ can be connected by a line segment $\ga$ with the length $|x-y|\leq l;$
  \item[(2).] Each point in $\ga$ has a positive distance at least $\dd>0$ from the boundary of $\Om;$
\item[(3)] $s$ and $ t$ satisfy $T_1\leq t-s\leq T_2$ for some $T_1, T_2>0;$
\end{enumerate}
 we have
\beq
u(y, s)\leq C \,u(x, t) , \label{eq:J004}
\eeq where $C$ depends only on $c, n, \mu, \min\{s, \dd^2\}, l, T_1$ and $T_2$.
\end{theo}
\begin{proof}
Let $\ga$ be the line segment with the property $(1)$ and $(2)$ connecting $x$ and $y$. We set $$p_0=y,\quad  p_N=x, \quad p_i=p_0+\frac {x-y}{N}i\in \ga$$ for any $0\leq i\leq N.$ Here we choose $N$ to be the smallest integer satisfying
\beq
N>\max\Big\{\frac {2(t-s)}{s}, \frac {l}{\min\{\frac {\sqrt{s}}{4}, \frac {\dd}4 \}}\Big\}. \label{eqB:100}
\eeq We define
\beq
R=\frac {2l}{N}, \quad  \te=1+\frac {t-s}{R^2N}.\label{eqB:101}
\eeq We can check that $R\leq \frac {\dd}2.$
For any $s, t\in (0, T)$, we choose $\{t_i\}_{i=0}^N$ such that $t_0=s, t_N=t$ and \beq t_{i}-t_{i-1}=\frac {t-s}N \label{eqB:105}\eeq for all integers $1\leq i\leq N$. Note that (\ref{eqB:100})-(\ref{eqB:105}) imply that for any $0\leq i\leq N-1$,
$$t_{i+1}-\te R^2\geq s-\te R^2=s-R^2-\frac {t-s}N\geq \frac s4>0$$
and
$$|p_{i+1}-p_i|=\frac {|x-y|}{N}\leq \frac lN=\frac R2.$$
Therefore, for any $0\leq i\leq N-1$ we have  $(t_{i+1}-\te R^2, t_{i+1})\subset (0, T) $ and $p_{i+1}\in B_{\frac R2}(p_i)$.  Applying Theorem \ref{theo:B2} on $B_R(p_i)\times (t_{i+1}-\te R^2, t_{i+1})\subset \Omega\times (0, T)$, we have
\beq
u(p_i, t_i)\leq C\,u(p_{i+1}, t_{i+1}),
\eeq where $C$ depends only on $c, n, \mu$ and $\frac 1{\te-1}=\frac {R^2N}{t-s}. $ Here we used the fact that
$t_i=(t_{i+1}-\te R^2)+R^2.$
Therefore,
\beq
u(y, s)=u(p_0, t_0)\leq C^Nu(p_N, t_N)=C' u(x, t)  \label{eqB:103}
\eeq
where the constant
$C'$ in (\ref{eqB:103}) depends only on $c, n, \mu, \min\{s, \dd^2\}, l,  T_1$ and $T_2$. The theorem is proved.

\end{proof}

A direct corollary of Theorem \ref{theo:B4} is the following result.

\begin{theo}\label{theo:B3} Let $\Om$ be a bounded domain in $\RR^n$. Suppose that $u(x, t)\in W^{1, 2}_{n+1}(\Om\times (0, T))$ is  a nonnegative solution to the equation
\beq
L u=-\pd ut+a^{ij}(x, t)u_{ij}+b^i(x, t)u_i+c u=0,  \label{eqB:0042}
\eeq where $c$ is a constant and the coefficients satisfy (\ref{eqB:001})-(\ref{eqB:002}) for a constant $\mu>0$, and $\Om', \Om''$ are subdomains in $\Om$ satisfying the following properties:
\begin{enumerate}
  \item[(1).]$\Om'\subset \Om''\subset \Om$, and $\Om''$ has a positive distance $\dd>0$ from the boundary of $\,\Om$;

  \item[(2).] $\Om'$ can be covered by $k$ balls with radius $r$, and all  balls are contained in $\Om''.$
\end{enumerate}
Then for any $s, t$ satisfying $0<s<t<T$ and any $x, y\in \Om'$,
 we have
\beq
u(y, s)\leq C \,u(x, t) , \label{eq:J0041b}
\eeq where $C$ depends only on $c, n, \mu, \min\{s, \dd^2\}, t-s, r$ and $k$.
\end{theo}
\begin{proof}By the assumption, we can find finite many points $\cA=\{q_1, q_2, \cdots, q_k\}$ such that
\beq
\Om'\subset \cup_{q\in \cA}B_r(q)\subset \Om''.
\eeq
For any $x, y\in \Om'$, there exists two points in $\cA$, which we denote by $q_1$ and $q_2$, such that $x\in B_{r}(q_1)$ and $y\in B_{r}(q_2)$. Then $x$ and $y$ can be connected by a polygonal chain $\ga$,
which consists of two line segments $\overline{xq_1}, \overline{yq_2}$ and a polygonal chain   with vertices in $\cA$ connecting $q_1$ and $q_2$. Clearly, the number of the vertices of $\ga$ is bounded by $k+2$
and the total length of $\ga$ is bounded by $(k+2) r.$ Moreover, by the assumption we have $\ga\subset \Om''$ and  each point in $\ga$ has a positive distance at least $\dd>0$ from the boundary of $\Om.$

 Assume that the polygonal chain $\ga$ has consecutive  vertices $\{p_0, p_1, \cdots, p_N\}$ with $p_0=y, p_N=x$ and $1\leq N\leq k+2$.  We apply Theorem \ref{theo:B4} for each line segment $\overline{p_ip_{i+1}}$ and the interval $[t_i, t_{i+1}]$, where $\{t_i\}$ is chosen as in (\ref{eqB:105}). Note that
$$\frac {t-s}{k+2}\leq t_{i+1}-t_i=\frac {t-s}{N}\leq t-s.$$
Thus,  for any $0\leq i\leq N-1$ we have
\beq
u(p_i, t_i)\leq Cu(p_{i+1}, t_{i+1}),\label{eqC:003}
\eeq where $C$ depends only on $c, n, \mu, \min\{s, \dd^2\}, r, k$ and $t-s$, and (\ref{eqC:003}) implies (\ref{eq:J0041b}).  This finishes the proof of Theorem \ref{theo:B3}.

\end{proof}

\section{The interior estimates of parabolic equations}
In this appendix, we present the interior estimates of parabolic equations from G. Lieberman's book \cite{[Lieb]} for the reader's convenience.

  Let $X=(x, t)$ be a point in $\RR^{n+1}$ and $x=(x_1, x_2, \cdots, x_n)\in \RR^n$. The norms on $\RR^n$ and $\RR^{n+1}$ are given by
$$|x|=\Big(\sum_{i=1}^n(x^i)^2\Big)^{\frac 12},\quad |X|=\max\{|x|, |t|^{\frac 12}\}. $$
Let $\Om$ be a domain in $\RR^{n+1}$.
Let $d(X, Y)=\min\{d(X), d(Y)\}$ where $d(X)=\mathrm{dist}(X, \cP\Om\cap \{t<t_0\})$. Here $\cP \Om$ denotes the parabolic boundary of $\Om$.  We define
$$|f|_0=\sup_{\Om} |f|. $$
If $b\geq 0$, we define
$$|f|_0^{(b)}=\sup_{X\in \Om}d(X)^b|f(X)|. $$
If $a=k+\al>0$ and $a+b\geq 0$, where $k$ is a nonnegative integer and $\al\in (0, 1]$, we define
\beqs
[f]_a^{(b)}&=&\sup\Big\{\sum_{|\bb|+2j=k}d(X, Y)^{a+b}\frac {|D_x^{\bb}D_t^j f(X)-D_x^{\bb}D_t^j f(Y)|}{|X-Y|^{\al}}: X\neq Y \; \hbox{in}\;\Om\Big\},\\
\langle f\rangle_a^{(b)}&=&\sup\Big\{\sum_{|\bb|+2j=k-1}d(X, Y)^{a+b}\frac {|D_x^{\bb}D_t^j f(X)-D_x^{\bb}D_t^j f(Y)|}{|X-Y|^{1+\al}}: X\neq Y \; \hbox{in}\;\Om,\;x=y\Big\}, \\
|f|_a^{(b)}&=& \sum_{|\bb|+2j\leq k}|D_x^{\bb}D_t^jf|_0^{(|\bb|+2j+b)}+[f]_a^{(b)}+\langle f\rangle_a^{(b)}.
\eeqs
We also define $|f|_a^*=|f|_a^{(0)}$, and define the spaces
$$H_a^*=\{f:\;|f|_a^*<\infty\},\quad H_a^{(b)}=\{f:\; |f|_a^{(b)}<\infty\}. $$

With these notations, we have the following result.

\begin{theo}\label{theo:C1}(Theorem 4.9 of \cite{[Lieb]}) Let $\Om$ be a bounded domain in $\RR^{n+1}$, and let $a^{ij}\in H_{\al}^{(0)}$ and $b^i\in H_{\al}^{(1)}$ satisfy
\beqn
\la |\xi|^2&\leq& a^{ij}\xi_i\xi_j\leq \La |\xi|^2, \quad [a^{ij}]_{\al}^{(0)}\leq A, \\
|b^i|_{\al}^{(1)}&\leq &B
\eeqn for some constants $A, B, \la$ and $\La$. Let $c\in H_{\al}^{(2)}$ satisfy
\beq
|c|_{\al}^{(2)}\leq c_1
\eeq for some constant $c_1$ and let $f\in H_{\al}^{(2)}$. If $u\in H^*_{2+\al}$ is a solution of
\beq
-\pd ut+a^{ij}u_{ij}+b^i u_i+c u=f
\eeq in $\Om$, then there is a constant $C$ determined only by $A, B, c_1, n, \la$ and $\La$ such that
\beq
|u|_{2+\al}^*\leq C(|u|_0+|f|_{\al}^{(2)}).
\eeq

\end{theo}

\end{appendices}

\vskip10pt
Haozhao Li,  Key Laboratory of Wu Wen-Tsun Mathematics, Chinese Academy of Sciences,  School of Mathematical Sciences, University of Science and Technology of China, No. 96 Jinzhai Road, Hefei, Anhui Province, 230026, China;  hzli@ustc.edu.cn.\\

Bing  Wang,  School of Mathematical Sciences, University of Science and Technology of China, No. 96 Jinzhai Road, Hefei, Anhui Province, 230026, China;
Department of Mathematics, University of Wisconsin-Madison,  Madison, WI 53706, USA;  bwang@math.wisc.edu.\\

\end{document}